\definecolor{Red}{rgb}{1.,0.,0.}
\newcounter{smallarabics}
\newenvironment{arabicenumerate}
{\begin{list}{{\normalfont\textrm{(\arabic{smallarabics})}}}
  {\usecounter{smallarabics}\setlength{\itemindent}{0cm}
   \setlength{\leftmargin}{5ex}\setlength{\labelwidth}{4ex}
   \setlength{\topsep}{0.75\parsep}\setlength{\partopsep}{0ex}
   \setlength{\itemsep}{0ex}}}
{\end{list}}
\newcounter{smallroman}
\newcommand{\ben}{\begin{arabicenumerate}}
\newcommand{\een}{\end{arabicenumerate}}
\def\init{\setcounter{equation}{0}}
\newtheorem{theorem}{Theorem}[section]
\newtheorem{assumption}{Hypothesis}[section]
\newtheorem{proposition}[theorem]{Proposition}
\newtheorem{lemma}[theorem]{Lemma}
\newtheorem{corollary}[theorem]{Corollary}
\theoremstyle{definition}
\newtheorem{definition}[theorem]{Definition}
\newtheorem{remark}[theorem]{Remark}
\newtheorem{example}[theorem]{Example}
\newcommand{\beq}{\begin{equation}}
\newcommand{\eeq}{\end{equation}}
\newcommand{\bea}{\begin{aligned}}
\newcommand{\eea}{\end{aligned}}
\newcommand{\bex}{\begin{example}}
\newcommand{\eex}{\end{example}}
\def\bel{\begin{lemma}}
\def\eel{\end{lemma}}
\def\bet{\begin{theorem}}
\def\eet{\end{theorem}}
\def\bed{\begin{definition}}
\def\eed{\end{definition}}
\def\ber{\begin{remark}}
\def\eer{\end{remark}}
\renewcommand{\sec}[1]{\S\ref{#1}}
\newcommand{\secs}[2]{\S\S\ref{#1}--\ref{#2}}
\renewcommand{\leq}{\leqslant}
\renewcommand{\geq}{\geqslant}
\newcommand{\step}[1]{{\noindent\emph{Step #1.\,}}}
\def\rr{{\mathbb R}}
\def\cc{{\mathbb C}}
\def\nn{{\mathbb N}}
\def\part{{\rm par}}
\def\slim{{\rm s-}\lim}
\def\cinf{C^\infty}
\def\APS{{\rm APS}}
\DeclareMathAlphabet{\pazocal}{OMS}{zplm}{m}{n}
\def\cY{{\pazocal Y}}
\def\cR{{\pazocal R}}
\def\cN{{\pazocal N}}
\def\wf{{\rm WF}}
\def\mod{{\rm \ \ mod \ }}
\def\ad{{\rm ad}}
\def\loc{{\rm loc}}
\def\sfl{{\rm sf}}
\let\Im\relax
\let\Re\relax
\let\div\relax
\let\sf\relax
\DeclareMathOperator{\Ker}{Ker}
\DeclareMathOperator{\Ran}{Ran}
\DeclareMathOperator{\Im}{Im}
\DeclareMathOperator{\Re}{Re}
\DeclareMathOperator{\ind}{ind}
\DeclareMathOperator{\div}{div}
\DeclareMathOperator{\spe}{sp}
\DeclareMathOperator{\sf}{sf}
\DeclareMathOperator{\End}{End}
\renewcommand\ker{{\rm ker}}
\newcommand{\qeds}{\qed\medskip}
\def \p{ \partial}
\def\12{\frac{1}{2}}
\def\14{\frac{1}{4}}
\DeclareMathOperator{\supp}{supp}
\newcommand{\one}{\boldsymbol{1}}
\def\cH{{\pazocal H}}
\def\c{{\rm c}}
\def\F{\module{H}^{\inv}}
\def\cF{{\pazocal F}}
\def\cX{{\pazocal X}}
\def\cK{{\pazocal K}}
\def\12{\frac{1}{2}}
\def\ad{{\rm ad}}
\def\bep{\begin{proposition}}
\def\eep{\end{proposition}}
\def\cE{\pazocal{E}}
\newcommand{\bra}{\langle}
\newcommand{\ket}{\rangle}
\def\init{\setcounter{equation}{0}}
\DeclareSymbolFont{boldoperators}{OT1}{cmr}{bx}{n}
\newcommand*{\defeq}{:=}				
\newcommand*{\eqdef}{=:}							
\def\Op{{\rm Op}}
\newcommand{\Red}{}
\def\fantom{\\ &\phantom{=}\,}
\def\cf{C^\infty}
\def\zero{{\rm\textit{o}}}
\def\c{{\rm c}}
\newcommand{\cHH}{\cH^+_-\oplus \cH_+^-}
\def\loc{{\rm loc}}
\let\origmaketitle\maketitle
\def\maketitle{
  \begingroup
  \def\uppercasenonmath##1{} 
  \let\MakeUppercase\relax 
	\origmaketitle
  \endgroup
}
\def\beproof{
\noindent{\bf Proof.}\ \ }
\renewenvironment{proof}{\beproof}{\qeds}
\newenvironment{refproof}[1]{\smallskip
\noindent{\bf Proof of {#1}.}\ \ }{\qeds}
\newcommand{\inv}{{\scriptscriptstyle (-1)}}
\def\spexi{\xi}
\def\rx{x}
\newcommand{\open}[1]{\mathopen{}\mathclose{\left]#1 \right[}}
\newcommand{\clopen}[1]{\mathopen{}\mathclose{\left[#1 \right[}}
\newcommand{\opencl}[1]{\mathopen{}\mathclose{\left]#1 \right]}}
\newcommand{\Eucl}{{\rm\scriptscriptstyle E}}
\def\dvol{\mathop{}\!d{\rm vol}}
\def\SM{SM}
\def\SpM{S^+\!M}
\def\dual{\!\cdot \!}
\def\st{{ \ |\  }}
\newcommand{\norm}[1]{\left\|{#1}\right\|}
\newcommand{\module}[1]{\left|#1\right|}
\def\td{{\rm td}}
\newcommand{\pdo}[3]{\Psi^{#1,#2}_{\td}(#3;E)}
\newcommand{\pdos}[3]{\Psi^{#1,#2}_{\td}(#3;S\Sigma)}
\newcommand{\sym}[3]{S^{#1,#2}_{\td}(#3;E)}
\newcommand{\sob}[1]{H^{#1}(\Sigma;E)}
\newcommand{\B}[2]{B(\sob{#1},\sob{#2})}
\newcommand{\Aform}{\widehat{\rm A}}
\renewcommand{\iff}{\xLeftrightarrow{\phantom{--}}}
\newsavebox\myboxA
\newsavebox\myboxB
\newlength\mylenA
\newcommand*\xoverline[2][0.75]{%
    \sbox{\myboxA}{$\m@th#2$}%
    \setbox\myboxB\null
    \ht\myboxB=\ht\myboxA%
    \dp\myboxB=\dp\myboxA%
    \wd\myboxB=#1\wd\myboxA
    \sbox\myboxB{$\m@th\overline{\copy\myboxB}$}
    \setlength\mylenA{\the\wd\myboxA}
    \addtolength\mylenA{-\the\wd\myboxB}%
    \ifdim\wd\myboxB<\wd\myboxA%
       \rlap{\hskip 0.8\mylenA\usebox\myboxB}{\usebox\myboxA}%
    \else
        \hskip -0.9\mylenA\rlap{\usebox\myboxA}{\hskip 0.9\mylenA\usebox\myboxB}%
    \fi}
\newsavebox\xmyboxA
\newsavebox\xmyboxB
\newlength\xmylenA
\newcommand*\xxoverline[2][0.75]{%
    \sbox{\xmyboxA}{$\scriptstyle\m@th#2$}%
    \setbox\xmyboxB\null
    \ht\xmyboxB=\ht\xmyboxA%
    \dp\xmyboxB=\dp\xmyboxA%
    \wd\xmyboxB=#1\wd\xmyboxA
    \sbox\xmyboxB{$\scriptstyle\m@th\overline{\copy\xmyboxB}$}
    \setlength\xmylenA{\the\wd\xmyboxA}
    \addtolength\xmylenA{-\the\wd\xmyboxB}%
    \ifdim\wd\xmyboxB<\wd\xmyboxA%
       \rlap{\hskip 0.8\xmylenA\usebox\xmyboxB}{\usebox\xmyboxA}%
    \else
        \hskip -0.9\xmylenA\rlap{\usebox\xmyboxA}{\hskip 0.9\xmylenA\usebox\xmyboxB}%
    \fi}
\def\M{\xoverline{M}}
\def\Ms{{\xxoverline{M}}}
\def\pM{\partial \M}
\def\pMs{{\partial\Ms}}
\def\cp{T} 
\def\cm{T^{-1}}
\author{}
\address{Laboratoire Jacques-Louis Lions, Sorbonne Universit\'e, 75252 Paris, France}
\email{shend@ljll.math.upmc.fr}
\author[]{\normalsize Dawei \textsc{Shen} \& Micha{\l} \textsc{Wrochna}}
\address{CY Cergy Paris Universit\'e, 2 av.~Adolphe Chauvin, 95302 Cergy-Pontoise, France}
\email{michal.wrochna@cyu.fr}
\begin{document}

\title[]{\Large An   index theorem  on asymptotically static spacetimes\\ with compact Cauchy surface}

\begin{abstract} We consider the Dirac operator on asymptotically static  Lorentzian manifolds with an odd-dimensional compact Cauchy surface. We prove that if Atiyah--Patodi--Singer boundary conditions are imposed at infinite times then the Dirac operator is Fredholm.  This generalizes  a theorem due to Bär--Strohmaier \cite{BS} in the case of finite times, and we also show that the corresponding index formula extends to the infinite  setting.
Furthermore, we demonstrate the existence of a Fredholm inverse which is at the same time a Feynman parametrix in the sense of Duistermaat--Hörmander. The proof combines methods from time-dependent scattering theory with a variant of Egorov's theorem for pseudo-differential hyperbolic systems.
\end{abstract}

\maketitle

\section{Introduction}

\subsection{Introduction and main result} In the last few years it was realized that hyperbolic operators on Lorentzian manifolds have an interesting  Fredholm and spectral theory despite their non-ellipticity and non-hypoellipticity  \cite{Vasy2013,BVW,GHV,BS,derezinskisiemssen0,massivefeynman1,vasyessential}. Furthermore, it turned out that it is possible to show relationships with geometry that have surprisingly many common features with the Riemannian setting \cite{BS,Strohmaier2020b,Braverman2020,Dang2020a,Baer2020a}.



 In particular, Bär--Strohmaier \cite{BS}  proved an index formula  which can be interpreted as the Lorentzian version of the  Atiyah--Patodi--Singer theorem for the Dirac operator on compact spacetimes with space-like boundary.  A generalisation to spatially non-compact manifolds was then shown by Braverman \cite{Braverman2020}   in the case  of a confining Callias potential, and different boundary conditions were studied by Bär--Hannes \cite{Bar2018}.

On the other hand, relativistic physics  has provided strong motivation for considering hyperbolic operators on Lorentzian manifolds that are \emph{non-compact in temporal directions}.   This has included proofs of the Fredholm property or global invertibility of the wave and Klein--Gordon operator by Vasy \cite{Vasy2013} and Hintz \cite{Hintz2017} on asymptotically de Sitter and Kerr--de Sitter spacetimes and by Baskin--Vasy--Wunsch \cite{BVW},  Gell-Redman--Haber--Vasy \cite{GHV}, Gérard--Wrochna \cite{massivefeynman1,Gerard2018,Gerard2020} and Taira \cite{Taira2020a} for asymptotically Minkowski spacetimes, cf.~Derezi\'nski--Siemssen \cite{derezinskisiemssen0,derezinskisiemssen2}, Vasy \cite{vasyessential}  and Nakamura--Taira \cite{nakamurataira} for the closely related question of essential self-adjointness. However, as these results do not concern the Dirac operator, they are unlikely to be tied to a rich index theory, and typically the index is zero indeed.

The objective of the  present work  is to prove an index theorem for the Dirac operator on a class of Lorentzian manifolds that are non-compact in time, generalizing in this way the result of Bär--Strohmaier, and to analyze  microlocal properties of parametrices needed to make the connection with the local geometry. 

Namely, we make the following assumption on the geometry. Suppose that $(M,g)$ is an oriented and time-oriented smooth Lorentzian manifold such that $M=\rr\times \Sigma$ and
\beq
\Sigma \mbox{ is compact and odd-dimensional}.
\eeq
We assume that for each $t\in\rr$, $\{t\}\times \Sigma$ is a Cauchy surface. Thus, $(M,g)$ is a \emph{globally hyperbolic spacetime} with compact Cauchy surface.

\begin{assumption}\label{hypothesis1}  We assume the metric  to be of the form
\beq
g=- c^2(t)dt^2 + h(t),
\eeq
where $c\in \cf(M)$ satisfies $c>0$ and  $\rr\ni t\mapsto h(t)\in \cf(\Sigma;T^*\Sigma\otimes_{\rm s}T^*\Sigma)$ is a smooth  family of Riemannian metrics on $\Sigma$ such that for some $\delta>1$,
\begin{enumerate}
\item\label{thyp1} $c(t)-c_\pm\in S^{-\delta}(\rr_\pm,\cf(\Sigma))$ for some $c_\pm\in \cf(\Sigma)$ s.t.~$c_\pm>0$,
\item\label{thyp2}  $h(t)-h_\pm\in S^{-\delta}(\rr_\pm,\cf(\Sigma;T^*\Sigma\otimes_{\rm s}T^*\Sigma))$ for some Riemannian metric $h_\pm$.
\end{enumerate}
\end{assumption}

This means in particular that $g$ decays as $t\to\pm\infty$ to a static (product type) metric at short-range rate $\module{t}^{-\delta}$, and differentiating the coefficients $k$ times in $t$ yields decay of order $ \module{t}^{-\delta-k}$. Thus, $g$ is an \emph{asymptotically static metric}. Through the change of coordinates $\tau_\pm=e^{\mp t}$ for $t$ close to $\pm\infty$ we can  compactify $M$, which results in a manifold $\M$ with boundary $\pM=\{ \tau_+=0\}\cup \{\tau_- =0\}$ with two components representing  future and past infinity  $t=\pm\infty$. 

\begin{remark} Close to the boundary, if we write $\tau=\tau_\pm$,  then $dt^2=\pm  \frac{d\tau^2}{\tau^2}$.  We remark that in  Hypothesis \ref{hypothesis1}  we do not assume that $c_{|\pMs}=c_\pm=1$. Furthermore,  while the conditions \eqref{thyp1}--\eqref{thyp2} imply \emph{conormality} of the metric coefficients, they do not  imply smoothness in $\tau$ at the boundary. This is the main difference as compared to (the Lorentzian analogue of) the  class of \emph{exact b-metrics} considered by Melrose \cite{Melrose1993} in his version of the Atiyah--Patodi--Singer theorem. 
\end{remark}

We consider the Dirac operator $D$ acting on positive-chirality sections of a spinor bundle $SM$ over $M$, see \sec{sec2} for details on the terminology. At each time $t\in \rr$, $D$ induces a Riemannian Dirac operator denoted by $A(t)$, and there are two boundary Dirac operators $A_\pm$ corresponding to the limits $t\to\pm\infty$. If $I\subset\rr$ is an interval we denote by $\one_I(A(t))$ the corresponding spectral projection. 

We consider $D$ as an operator   $D:\cX \to \cY$, where $\cY$ is a weighted $L^2$-space with a sufficiently decaying weight in $t$ (the precise choice  has no importance for the result), and  $\cX$ is the completion of smooth spinors in the corresponding $L^2$ graph norm, see \sec{ss:ind}.  We then define $D_{\rm APS}$ as the restriction of $D$ to the space $\cX_{\APS}$ consisting of all $u\in \cX$ such that
\beq\label{eq:APSc}
\lim_{t\to+\infty} \one_{\opencl{-\infty,0}}(A(t))u(t)=0, \quad \lim_{t\to-\infty} \one_{\clopen{0,+\infty}}(A(t))u(t)=0. 
\eeq
As in the compact setting of \cite{BS}, \eqref{eq:APSc} are interpreted as Atiyah--Patodi--Singer boundary conditions at  $\pM$. Our first result is the following theorem. 

\begin{theorem}[{cf.~Theorem \ref{thm:final}}] \label{thm:main} The  operator $D_{\rm APS}: \cX_{\rm APS} \to \cY$ is Fredholm of index
$$
\ind(D_{\rm APS})=\int_M \widehat{\rm A}+ \int_{\pMs} {\mathrm T}\widehat{\rm A}+\12\big(\eta(A_+,A_-) -\dim\ker(A_+)-\dim\ker(A_-)\big),
 $$
 where $\Aform$ the Atiyah--Singer integrand (or  $\Aform$-form) on $(M,g)$,  ${\mathrm T}\widehat{\rm A}$  is the  transgression form, and $\eta(A_+,A_-)=\eta(A_+)-\eta(A_-)$ is the difference of the eta forms of $A_+$ and $A_-$.
\end{theorem}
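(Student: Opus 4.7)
The overall strategy is to truncate the spacetime at times $\pm T$, apply the Bär--Strohmaier theorem of \cite{BS} to the resulting compact-in-time slab, and then pass to the limit $T\to\infty$ with control over every term in the index formula. The two key analytic inputs are a time-dependent scattering theory for the family $A(t)$ and an Egorov-type theorem identifying propagated spectral projections of $A(t)$ with spectral projections of $A(s)$ modulo compact operators.

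\emph{Step 1 (Reduction to an evolution equation).} Multiplying $D$ by $c^{-1}$ and conjugating by a time-dependent unitary that trivializes the volume density of $h(t)$ (and the $t$-dependent identification of spinor bundles) reduces $D$ to the canonical form $i^{-1}\partial_t - A(t)$ on $\rr\times \Sigma$, where $A(t)$ is a self-adjoint first-order elliptic operator on the fixed Hilbert space $L^2(\Sigma;S\Sigma)$. Under this reduction, Hypothesis \ref{hypothesis1} becomes symbolic $|t|^{-\delta}$ decay of $A(t)-A_\pm$ at $\pm\infty$ in suitable operator norms between Sobolev spaces.

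\emph{Step 2 (Scattering and Fredholmness).} Since $\delta > 1$, the difference $A(t) - A_\pm$ is integrable at infinity, and Cook's method then yields existence of the wave operators
$$W_\pm := \slim_{t\to\pm\infty} U(0,t)\, e^{-itA_\pm},$$
where $U(t,s)$ denotes the propagator of the family $A(t)$; a symmetric argument gives asymptotic completeness. Next, an Egorov-type theorem for the pseudo-differential hyperbolic system $i^{-1}\partial_t - A(t)$ identifies $U(0,t)\, \one_{\geq 0}(A(t))\, U(t,0)$ with $\one_{\geq 0}(A(0))$ up to a compact operator. Combined, these show that the boundary conditions \eqref{eq:APSc} at $t=\pm\infty$ are equivalent, modulo compact perturbations, to the classical finite-time Bär--Strohmaier APS conditions at any sufficiently large $\pm T$. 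Fredholmness of $D_{\rm APS}:\cX_{\rm APS}\to\cY$ therefore follows by combining the Bär--Strohmaier theorem on the truncation $[-T,T]\times \Sigma$ with the global well-posedness of the Cauchy problem on the tails $|t|>T$.

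\emph{Step 3 (Index formula by limiting argument).} Let $I_T$ denote the APS index on $M_T:=[-T,T]\times \Sigma$ computed by \cite{BS},
$$I_T = \int_{M_T} \widehat{\rm A} + \int_{\partial M_T} {\mathrm T}\widehat{\rm A} + \tfrac{1}{2}\bigl(\eta(A(T)) - \eta(A(-T)) - \dim\ker A(T) - \dim\ker A(-T)\bigr).$$
The comparison in Step 2 upgrades to $I_T = \ind(D_{\rm APS})$ for $T$ large. One then lets $T \to \infty$ and verifies convergence term by term: the bulk integral converges to $\int_M \widehat{\rm A}$ by curvature decay induced by the conormal decay of $g$; the transgression forms at $\pm T$ converge to those on $\pMs$ because they depend only on the metric and connection data at the boundary time slice, which stabilize by Hypothesis \ref{hypothesis1}; $\eta(A(\pm T)) \to \eta(A_\pm)$ by continuity of the eta invariant under controlled perturbations combined with the absolute integrability in $t$ of $\dt \eta(A(t))$ coming from $\delta > 1$; and $\dim\ker A(\pm T) = \dim\ker A_\pm$ for $T$ large, since $0$ is isolated in the discrete spectrum of $A_\pm$ and $A(t)$ converges to $A_\pm$ in norm resolvent sense.

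\emph{Main obstacle.} The most delicate point is the Egorov step: to use it for Fredholmness one must conjugate the spectral projection $\one_{\geq 0}(A(t))$ by the propagator $U(t,s)$ and recognize the result as a pseudo-differential operator agreeing with $\one_{\geq 0}(A(s))$ up to compact errors, with sufficient uniformity as $t \to \pm\infty$. This requires carrying the Egorov expansion beyond its principal symbol and handling the \emph{conormal} (non-smooth-in-$\tau$ at the boundary) $t$-dependence of the symbol of $A(t)$ enforced by Hypothesis \ref{hypothesis1}, which is precisely what distinguishes the present setting from the exact b-metric framework of Melrose. The matching of spectral-flow contributions in the tails with the kernel jumps at $\pm\infty$, needed for the convergence of $\eta(A(\pm T))$, is the secondary technical point.
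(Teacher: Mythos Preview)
Your overall strategy shares the right ingredients (reduction to an evolution equation, Egorov-type control of propagated spectral projections, scattering theory), but the limiting argument in Step 3 contains a genuine gap, and the route differs from the paper's in an important way.

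The problematic claims are that $\eta(A(\pm T))\to\eta(A_\pm)$ and $\dim\ker A(\pm T)=\dim\ker A_\pm$ for large $T$. Neither holds in general. If $0\in\spe(A_\pm)$, the eigenvalues of $A(T)$ near $0$ need not be exactly zero, so $\dim\ker A(T)$ can be strictly smaller than $\dim\ker A_\pm$ for all finite $T$; norm resolvent convergence only gives upper semicontinuity of the kernel dimension. Likewise the eta invariant is continuous only modulo integers, jumping precisely when eigenvalues cross zero, so ``absolute integrability of $\frac{d}{dt}\eta(A(t))$'' is not available. One can try to repair this by tracking the integer jumps via spectral flow on the tails $[T,+\infty]$ and $[-\infty,-T]$, but then the equality $I_T=\ind(D_{\rm APS})$ also becomes nontrivial: it amounts to matching $\sf_{[-T,T]}(A)-\dim\ker A(T)$ with $\sf_{\overline\rr}(A)-\dim\ker A_+$, and the argument collapses into a direct spectral-flow computation anyway.

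The paper sidesteps these issues entirely. Fredholmness of $D_{\rm APS}$ is proved directly (not by reduction to \cite{BS} on a slab) from the abstract fact that compactness of the off-diagonal scattering block $W^{+-}$ forces $W^{--}$ to be Fredholm with $\ind(D_{\rm APS})=\ind(W^{--})$; the Egorov-type result supplies this compactness, and the crucial long-time input is that the remainder $R(t)$ becomes \emph{smoothing} as $t\to\pm\infty$, not merely compact (see \eqref{slims}). For the index formula, $\ind(W^{--})$ is identified with the spectral flow $\sf_{\overline\rr}(A)$, and this is compared with an auxiliary \emph{Riemannian} APS problem on $[-T,T]\times\Sigma$ whose metric is exactly product with $h_\pm$ for $\pm t>\frac12$, so that the boundary operators are literally $A_\pm$. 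The classical Riemannian APS theorem then produces $\eta(A_\pm)$ and $\dim\ker A_\pm$ directly, and the only $T\to\infty$ limit taken is in the (convergent) curvature and transgression integrals. No limit of eta invariants or kernel dimensions is ever required.
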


The content of the above index formula is exactly analogous to the result of Bär--Strohmaier (and its generalisation \cite{Braverman2020}, where $\eta(A_+,A_-)$ is defined as a relative eta invariant \cite{Braverman2021}), which in turn closely parallels  the Atiyah--Patodi--Singer theorem in Riemannian signature, see \cite{Atiyah1975a,Gerd1992,Gilkey1993,Melrose1993}. 

On the other hand, a significant difference with the results known in the finite-time case can be seen on the level of local properties of parametrices of $D_{\rm APS}$. 

The essential question is how do parametrices of $D_{\rm APS}$ fit into the Duistermaat--Hörmander theory of Fourier integral operators \cite{DH}, or more precisely, does  $D_{\rm APS}$ have parametrices with \emph{Feynman wavefront set}, a microlocal condition (see Definition \ref{deffeynman}) which characterizes a parametrix uniquely modulo smoothing operators. In the finite-time setting of \cite{BS} this property was only shown to hold true for metrics that are exactly static (product-type) in a neighborhood of the boundary, yet the results for wave operators on non-compact spacetimes  \cite{GHV,vasywrochna,massivefeynman1,vasyessential} suggest that this assumption might not necessarily be needed in situations like Hypothesis \ref{hypothesis1}. Our second main result states that the Feynman property holds true indeed.

\begin{theorem}[{cf.~Theorem \ref{thm:final2}}]\label{thm:maint2} $D_\APS$ admits a parametrix which has Feynman wavefront set.
\end{theorem}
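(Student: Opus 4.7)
The plan is to realize the Fredholm inverse $G_\APS$ of $D_\APS$ as an explicit operator built out of the time evolution generated by the family $A(t)$, and then analyze its wavefront set by a variant of Egorov's theorem applied to matrix-valued symbols. First I would rewrite $D$ as an evolution equation on $\Sigma$: after multiplying by $c$ and trivializing the spinor bundle fiberwise along the integral curves of $\p_t$, $D$ takes the form $\p_t+iA(t)+R(t)$ modulo smoothing, with $A(t)$ self-adjoint on $L^2(\Sigma;S\Sigma)$ and $R(t)$ a bounded perturbation; by Hypothesis \ref{hypothesis1}, $A(t)\to A_\pm$ in $S^{-\delta}(\rr_\pm)$ with $\delta>1$. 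Let $U(t,s)$ denote the unitary propagator generated by $-iA(t)$.

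Next I would write $G_\APS$ in the form
\beq
(G_\APS f)(t) = \int_{-\infty}^{t} U(t,s) P_+(s) c(s) f(s)\, ds - \int_t^{+\infty} U(t,s) P_-(s) c(s) f(s)\, ds,
\eeq
where $P_\pm(s)$ is a time-dependent projection which at $s\to\pm\infty$ converges to $\one_{(0,\infty)}(A_\pm)$, respectively $\one_{(-\infty,0)}(A_\pm)$, in a suitable scattering-theoretic sense. The convergence of the integrals at $\pm\infty$ and the fact that $G_\APS f$ satisfies the APS boundary conditions \eqref{eq:APSc} follow from the existence and completeness of wave operators intertwining $U(t,s)$ with the static evolutions $e^{-i(t-s)A_\pm}$, made possible by the short-range decay $\delta>1$. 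Up to finite-dimensional corrections coming from $\ker A_\pm$, this yields the Fredholm inverse asserted in Theorem \ref{thm:main}.

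Finally I would establish the Feynman property by a variant of Egorov's theorem tailored to the matrix-valued principal symbol of $A(t)$: since $\sigma_{\rm pr}(A(t))$ is a self-adjoint endomorphism of $S\Sigma$ whose eigenprojections correspond to the two connected components of $\Char(D)$, conjugation by $U(t,s)$ preserves the pseudodifferential class and identifies $P_\pm(s)$ microlocally with the projections onto the positive- respectively negative-frequency components of $\Char(D)$. In the formula above, the first integral then propagates the positive-frequency part of $f(s)$ forward in time while the second integral propagates the negative-frequency part backward, which is exactly the Feynman prescription; hence $\WF'(G_\APS)$ is contained in the diagonal plus the Feynman half of the bicharacteristic relation, proving Theorem \ref{thm:maint2}.

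The main obstacle will be to make the Egorov-type identities and the pseudodifferential description of $P_\pm(s)$ uniform in $t$ up to the boundary at infinity. Since the metric is only conormal (not smooth) at $\pMs$, standard arguments in the b-calculus of Melrose do not apply directly, and one must combine a symbolic description of the scattering wave operators from the second step with uniform symbol estimates in the scattering variables that remain valid in the compactification $\M$. This is where the short-range structure of Hypothesis \ref{hypothesis1} is used in an essential way, and it is precisely the place where the finite-time argument of \cite{BS}, which relied on exact staticity near the boundary, breaks down and has to be replaced by the pseudodifferential/scattering machinery.
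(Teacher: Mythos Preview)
Your outline has the right ingredients—evolutionary reduction, scattering for the wave operators, and an Egorov-type control of the time-dependent spectral projections—and you correctly identify where the finite-time argument of \cite{BS} breaks down. But it diverges from the paper's actual proof in structure and contains one genuine gap.

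The gap is in your Egorov step. You write that ``conjugation by $U(t,s)$ preserves the pseudodifferential class''; for a Dirac-type (matrix-valued) Hamiltonian this is false, as the paper itself remarks right after Proposition~\ref{lemkey}. What replaces it is the tailored statement that there exist order $-1$ corrections $R(t)$ with
\[
U(t,0)\bigl(\chi(H(0))+R(0)\bigr)U(0,t)=\chi(H(t))+R(t),
\]
and that $U(0,t)R(t)U(t,0)$ becomes smoothing as $t\to\pm\infty$. This suffices to carry out the microlocal identification of the projections with the frequency splitting, but it is not the statement you wrote, and the difference matters for the kernel analysis you propose. A related issue: a time-dependent family $s\mapsto P_\pm(s)$ of spatial $\Psi$DOs is \emph{not} a pseudodifferential operator on spacetime (tensor products of $\Psi$DOs need not be $\Psi$DOs), so reading off $\wf'(G_\APS)$ from your integral formula by FIO composition does not follow directly; the paper handles this via a modification trick from H\"ormander, see the end of the proof of Proposition~\ref{propwf}.

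The structural difference is in how the two-sided bound on $\wf'$ is obtained. You propose to read both variables off the explicit kernel. The paper instead compares $D_\APS^{\inv}$ with the retarded and advanced inverses $D_\pm^{-1}$: Proposition~\ref{propwf} (resting on the Egorov variant above) gives only the one-sided inclusion $\wf'(D_\APS^{\inv}-D_\pm^{-1})\subset(\cN^\mp\cup\zero)\times T^*M$, and symmetry in the second variable is then supplied by a \emph{positivity} result (Proposition~\ref{prop:positivity}) showing that $D_\APS^{\inv}-D_\pm^{-1}$ is $\geq 0$ modulo smoothing, so that its primed wavefront set is symmetric by a Cauchy--Schwarz argument. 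Combined with the known $\wf'$ of $D_-^{-1}-D_+^{-1}$ this yields the Feynman estimate. Your route, if the gaps above are closed, would in principle bypass positivity altogether; the paper's route trades the delicate direct kernel analysis for this more robust positivity/symmetry argument, at the cost of having to establish Proposition~\ref{prop:positivity}.
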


The importance of the Feynman property stems primarily from applications in Quantum Field Theory on curved spacetimes, as already outlined in the work of Duistermaat--Hörmander \cite{DH} and further developed by many other authors. Very recently, however, it was  demonstrated that is  also essential in relating the global theory of hyperbolic operators with the local geometry (as it enables the use of the Hadamard parametrix as an intermediary). In particular, it plays a central rôle  in the proof of the Lorentzian spectral action expansion and Kastler--Kalau--Walze theorem by Dang--Wrochna \cite{Dang2020a}.  In the Dirac case, the Feynman wavefront set is the key assumption in the approach of Bär--Strohmaier \cite{Baer2020a} to local index theory in the Lorentzian signature.  Thus, as a consequence of Theorem \ref{thm:maint2}, the local index theorem  \cite{Baer2020a} applies to the geometric setting of Hypothesis \ref{hypothesis1}.

\subsection{Plan of the proofs} The proof of the index formula in \cite{BS} is based on a spectral flow argument and a reduction to Euclidean signature: this turns  out to be very robust and can be applied to our setting with only a few necessary adaptations (see also \cite{ronge,dun3} for a more abstract version). On the other hand, it is the proof of the Fredholm property in Theorem \ref{thm:main} which requires the introduction of new techniques {\Red(see \cite[\S6]{ronge} for a related  example where the Fredholm property is not true)}. The method used in \cite{BS} {\Red(see also \cite{Damaschke2021} for a generalization to Galois coverings)} is based on the calculus of Fourier integral operators and does not apply to the present situation. Instead, we use methods from time-dependent scattering theory combined with a long-time variant of Egorov's theorem for hyperbolic systems. The arguments are then refined to prove the wavefront set estimate stated in Theorem \ref{thm:maint2}. More precisely, the proofs are structured as follows.\smallskip

\ben
\item\label{istep1} In \sec{sec2} (see also Lemma \ref{lemrel}) we reduce the Dirac operator to the evolutionary form
$\p_t - i H(t)$, where $H(t)$ is a differential operator which is elliptic and similar to a self-adjoint operator  for each $t\in \rr$.   \smallskip

\item\label{istep2} We consider the Schr\"odinger propagator $U(t,s)$ generated by $H(t)$ and analyze in \sec{sec:scattering} the large-time Heisenberg evolution of spectral projections. The key result is Proposition \ref{lemkey}, which states that for suitable smooth functions $\chi$, the operator 
$$U(t,0)  \chi (H(0)) U(0,t) - \chi(H(t))$$
is  compact, and it acquires extra  smoothing properties in the limit  $t\to \pm\infty$. For all $t$ it can also be expressed in terms of pseudo-differential operators of order $-1$, and in this sense Proposition \ref{lemkey} is a variant of Egorov's theorem. The proof uses a time-dependent pseudo-differential calculus developed by Gérard--Wrochna \cite{inout} and adapted to the present setting in \sec{sec:pdo} including a Beals type characterization.\smallskip

\item\label{istep3} In \sec{sec:fredholm}, the Atiyah--Patodi--Singer boundary conditions are interpreted in terms of asymptotic data in the sense of time-dependent scattering theory. More precisely, as comparison dynamics we take $e^{ it H(t)}$, we show in \sec{scatteringsection}      that $W(0,t)\defeq U(0,t)e^{i t H(t)}$   is well-behaved as $t\to \pm \infty$, and we use this in \sec{ss:fp} to give meaning to asymptotic data of solutions. We remark that this choice of comparison dynamics is unusual from the point of view of scattering theory but serves our purpose particularly well.\smallskip

\item\label{istep4} The Fredholm property is concluded in \secs{sec:fredholm}{ss:index}    from  \eqref{istep1}--\eqref{istep3} in combination with abstract Fredholm theory arguments in the spirit of \cite{BS}, and then the index formula follows in a similar way as in \cite{BS}.\smallskip

\item\label{istep5} The proof of the Feynman property in \sec{sec:feynman}  is essentially reduced  to showing that if $u$ satisfies $Du\in \cf$ and
$$
\lim_{t\to+\infty} \one_{\clopen{0,+\infty}}(A(t))u(t)=0,
$$
then its wavefront set $\wf(u)$ is contained in at most one of the two components $\cN^\pm$ of the characteristic set of $D$. This statement is shown to be a consequence of \eqref{istep2} (this contrasts with \cite{BS} where  a parametrix gluing contruction is used in the finite-time case). The reduction uses in a crucial way positivity properties of Fredholm inverses of $D_{\rm APS}$ which are proved in \sec{ss:positivity}.\smallskip
\een

We remark that the sections \secs{sec:scattering}{sec:pdo} involving the key technical result, Proposition \ref{lemkey},  are self-contained and  can be of independent interest.

\subsection{Bibliographical remarks} A primary application of index formulae is the computation of chiral anomalies in Quantum Field Theory, see Bär--Strohmaier \cite{BS2}, cf.~Zahn \cite{Zahn2015} and  Bär--Strohmaier \cite{Baer2020a} for local versions. 

Index formulae for the scattering matrix of the Dirac operator were previously obtained by Matsui \cite{Matsui1987,Matsui1990}, Bunke--Hirschman \cite{Bunke1992} and Pankrashkin--Richard \cite{Pankrashkin2014} (cf.~Finster \cite{Finster2017} for the index  of a Lorentzian signature operator). We point out that in our setting $\ind(D_{\rm APS})$ equals the index of a scattering matrix component (see Proposition \ref{indexflow} and the proof of Theorem \ref{thm:final}) for a non-standard comparison dynamics, and one can relate it to the usual scattering matrix under some additional assumptions.

Generalisations and variants of Egorov's theorem for pseudo-differential systems   were considered among others by Cordes \cite{Cordes1982,cordes},  Jakobson--Strohmaier \cite{Jakobson2007} and  Kordyukov \cite{Kordyukov2007}, cf.~Brum\-melhuis--Nourrigat \cite{Brummelhuis1999}, Bolte--Glaser \cite{Bolte2004} and Assal \cite{Assal2016} for the semi-classical case; see also   Capoferri--Vassiliev \cite{Capoferri2021} for the closely related construction of pseudo-differential projections. Our proof of  Proposition \ref{lemkey} is most closely related to \cite{Cordes1982} {\Red and to recent works by Gérard--Stoskopf \cite{GS,Gerard2021a} inspired by time-dependent projections in adiabatic theory, see e.g.~Sjöstrand \cite{Sjostrand1993}}.

The wavefront set estimate for parametrices of $D_{\rm APS}$ is closely related to the construction of Hadamard states from asymptotic data in Quantum Field Theory, a problem considered in different settings by authors including Moretti \cite{Moretti2008}, Dappiaggi--Pinamonti--Mo\-ret\-ti \cite{Dappiaggi2009}, G\'erard--Wrochna \cite{inout,massivefeynman1}, Vasy--Wrochna \cite{vasywrochna} {\Red and Gérard--Stoskopf \cite{Gerard2021a}}. Positivity properties are crucial  in this context, and have also been studied for  Feynman parametrices in the already mentioned references \cite{DH,massivefeynman1,BS} and in works by Vasy \cite{Vasy2017b} and Islam--Strohmaier \cite{Islam2020}. The time-dependent pseudo-differential calculus and approximate diagonalisation of the evolution used in \cite{massivefeynman1,Gerard2021a} are particularly relevant to the present setting, though the approach here is ultimately different (furthermore, in contrast to \cite{massivefeynman1,Gerard2021a} we do not assume $c_\pm=1$, which leads to new complications). 

On the side note we remark that if the boundary is assumed to be time-like instead of being space-like, the Lorentzian Dirac operator behaves very differently,  see Drago--Gro{\ss}e--Murro for the well-posedness of the corresponding Cauchy problem \cite{Drago2021}

Finally, as our result assumes compactness of $M$ in spatial directions,  one might ask if an index formula could be shown for e.g.~perturbations of Minkowski space and other classes of non-compact spacetimes. We hope  that the  advances  including e.g.~\cite{BVW, GHV,massivefeynman1,Braverman2021,Braverman2020,Baer2020a,Dang2020a,Stoskopf} make it a viable goal for future research. Generalizations in the spirit of the works of Bismut--Cheeger \cite{Bismut1990,Bismut1990a} or Melrose--Piazza \cite{Melrose1997} also remain an open problem.

\subsection*{Acknowledgments} The authors are particularly grateful to Dean Baskin, Christian Bär, Nguyen Viet Dang,  Penelope Gehring, Jesse Gell-Redman, Christian Gérard and Théo Stoskopf for helpful discussions. Support from the grants ANR-16-CE40-0012-01, ANR-20-CE40-0018 is gratefully acknowledged. M.W.~thanks  the MSRI in Berkeley and the Mittag--Leffler Institute in Djursholm for the kind hospitality during thematic programs and workshops in 2019--20.

\section{The Dirac operator in Lorentzian signature}\label{sec2}

\subsection{Geometric setup}\label{ss1} Let $(M,g)$ be a $1+d$-dimensional globally hyperbolic spacetime\footnote{Recall that an oriented and time-oriented smooth Lorentzian manifold $(M,g)$  is called a \emph{globally hyperbolic spacetime} if it has a \emph{Cauchy surface}, i.e.,  a smooth hypersurface which is intersected by every maximally extended, non-spacelike curve exactly once.} with compact Cauchy surface $\Sigma$. We assume that the dimension $d$ of $\Sigma$ is \emph{odd}.

Let us recall the necessary background on spinors and Dirac operators, see e.g.~\cite{Baum1981,Lawson1989,BGM,Trautman2008} for more detailed accounts.

Suppose $\SM\to M$ is a complex spinor bundle. We denote by $\cf(M;\SM)$ the space of its smooth sections, and use an analogous notation for sections of other vector bundles.

Let us recall that the spinor bundle $\SM$ is endowed with a linear map
$$
\gamma: \cinf(M; TM)\to \cinf(M; \End(\SM))
$$ called \emph{Clifford multiplication} and satisfying
\beq\label{clifford}
\gamma(X)\gamma(Y)+ \gamma(Y)\gamma(X)= -2( X\dual gY) \one, \ \ X, Y\in \cinf(M; TM).
\eeq
Furthermore, one is given a connection $\nabla^{SM}$ on $SM$, called \emph{spin connection}, such that in particular, for all $X, Y\in \cinf(M; TM)$ and  $\psi\in \cinf(M;SM)$,
$$
\nabla_{X}^{\SM}(\gamma(Y)\psi)= \gamma(\nabla_{X}Y)\psi+ \gamma(Y)\nabla_{X}^{\SM}\psi,
$$
where $\nabla$ is the Levi-Civita connection on $(M,g)$.
In the physicist's terminology, the \emph{massless Dirac operator} is the differential operator $\slashed{D}$ (or $i$ times $\slashed{D}$) given in a time-oriented local frame  $(e_{0}, e_1,\dots, e_{d})$ of $TM$ by
$$
\slashed{D}=g^{\mu\nu}\gamma(e_{\mu})\nabla^{SM}_{e_{\nu}} : \cf(M;SM)\to  \cf(M;SM)
$$
using Einstein's summation convention. The section $\Gamma=i^{d(d+3)/2}\gamma(e_0)\cdots\gamma(e_d)\in \cf(M;\End(SM))$ satisfies $\Gamma^2=\one$. The spinor bundle has therefore a decomposition $SM=S^+M\oplus S^- M$, where $S^\pm M$ is fiberwise the eigenspace of $\Gamma$ for the eigenvalue $\pm 1$.  Recall that we have   assumed that $d$ is odd, and consequently one deduces from \eqref{clifford} that $\Gamma$ anti-commutes with all $\gamma(X)$, hence $\slashed{D}\Gamma=-\Gamma\slashed{D}$. Thus, in terms of the $S^+M\oplus S^- M$ decomposition,
$$
\slashed{D}=\begin{pmatrix*}[c] 0 & D^- \\ {D}^+ & 0 \end{pmatrix*},
$$
where $$
D^\pm: \cf(M; S^\pm M)\to \cf(M;S^\mp M).
$$
The differential geometry literature often calls  $D^+$  the {Dirac operator} (at slight risk of confusion with $\slashed{D}$).  Note that in the physics literature the less ambiguous name \emph{Weyl operator} is usually used for $iD^+$.

\subsection{Foliation by Cauchy surfaces}\label{ss:foliation} By global hyperbolicity {\Red and the Bernal--S\'anchez theorem \cite{bernal1,bernal2}}, $M\cong\rr\times \Sigma$ and there exists a foliation $\{ \Sigma_t\}_{t\in \rr}$ by Cauchy hypersurfaces with $\Sigma_t=\{t\}\times \Sigma$.  Furthermore, there exists $c\in \cf(M)$ strictly positive and  a smooth family of Riemannian metrics $\rr\ni t\mapsto h(t)$  on $\Sigma$, such that (disregarding the diffeomorphism $M\to \rr\times \Sigma$ in the notation)
\beq
g=- c^2(t)dt^2 + h_{ij}(t)dy^i dy^j,
\eeq
where we sum over repeated indices, and the dependence on the spatial variable $y$ is dropped in the notation. We denote $|h(t)|\defeq\left|\det h(t) \right|$.

Let $n\in \cf(M;TM)$ be the unique past-directed vector field such that
$$
n(x)\dual g(x) n(x)=-1 \mbox{ and } n(x) \dual g(x) v=0
$$
for all $x=(t,y)\in M$ and $v\in T_y\Sigma_t$. Then, $\beta\defeq \gamma(n)\in \cf(M;\End(\SM))$ satisfies $\beta^2=\one$ and
\beq\label{eq:scpr}
\forall u\in\cf(M;SM), \ \bra u | u  \ket \defeq ( u | \beta u )_{\SM}\geq 0.
\eeq
The associated Hilbert space will be denoted by $L^2(M;SM)$, and $L^2(M;S^+M)$ is defined analogously.

{\Red By the results in \cite{BGM} and using the assumption that $\Sigma$ is odd-dimensional,   for any $t\in\rr$ the  restriction $(S^\pm M)|_{\Sigma_t}$ can be identified with  a spinor bundle   $S\Sigma_t$ on $\Sigma_t$.  It is equipped with the  Clifford multiplication $\gamma_t \in \cf(\Sigma_t;\End(S\Sigma_t))$ defined by restricting $i\beta \gamma$ to $\Sigma_t$. 
 For  any $t,s\in\rr$ let
\beq
\tau_t^s : (SM|_{\Sigma_t}) \to SM|_{\Sigma_t}
\eeq
be for each base point $y\in\Sigma$ the parallel transport for the spin connection $\nabla^{SM}$ along the curve $\rr\ni t\mapsto(t,y)\in M$. We denote $\tau_t\defeq\tau_t^0$ to simplify the notation. 
In what follows we will often write  $S\Sigma$  instead of $(S^+M|_{\Sigma_0})$  for the sake of brevity. 

For each $t\in\rr$, let $L^2(\Sigma_t; S^+M |_{\Sigma_t})$ be the Hilbert space defined similarly as $\bra \cdot | \cdot \ket$ using $\beta$ and the volume form $\dvol_{h(t)}$. Let $\rho(t)$ be the unique function such that $\dvol_{h(t)}=\rho(t)^2 \dvol_{h(0)}$, or more explicitly, $\rho(t)=|h(0)|^{-\frac{1}{4}}|h(t)|^{\frac{1}{4}}$. Then, the map
$$
U(t)\defeq \rho(t) \tau_t  : L^2(\Sigma_t; S^+M|_{\Sigma_t})\to  L^2(\Sigma; S\Sigma)
$$
is  invertible. }

\subsection{Dirac operators on Cauchy surfaces}\label{ss3} For each $t\in\rr$, the spinor connection $\nabla^{\SM}$ induces a spinor connection on $S\Sigma_t$, denoted by $\nabla^{S\Sigma_t}$, and satisfying (see \cite[(3.5)]{BGM})
$$
\nabla^{\SM}_X = \nabla^{S\Sigma_t}_X - \12 \beta \gamma(\nabla_{\!X}^{\phantom n} n), \ \ X\in \cf(\Sigma_t; T\Sigma_t).
$$
For $t\in \rr$, let $A(t)$ be the Dirac operator associated to the spinor connection $\nabla^{S\Sigma_t}$ and the Clifford multiplication $\gamma_t$, i.e.
$$
A(t) = h^{ij}(t)\gamma_t(e_{i})\nabla^{S\Sigma_t}_{e_{j}} : \cf(\Sigma_t;SM|_{\Sigma_t})\to  \cf(\Sigma_t;SM|_{\Sigma_t}).
$$
The Dirac operator $A(t)$ is elliptic and formally self-adjoint in $L^2(\Sigma_t; S^+M|_{\Sigma_t})$. Its closure, also denoted $A(t)$, has discrete spectrum.

The Dirac operator $\slashed{D}$ on $\SM$ can be expressed as follows in terms of $A(t)$ (see \cite[(3.6)]{BGM}):
\beq
\slashed{D}= \beta \begin{pmatrix}  -\nabla_n^{\SM}-i {A}(t)-r(t) & 0 \\ 0 &  -\nabla_n^{\SM}+i {A}(t)-r(t)  \end{pmatrix},
\eeq
where $r(t)$ is the multiplication operator by $\frac{d}{2}$ times the mean curvature of $\Sigma_t$. Our main object of interest is the  operator
\beq\label{eq:defD1}
D= -\nabla_n^{\SM}-i {A}(t)-r(t) :  \cf(M;S^+ M)\to\cf(M; S^+M).
\eeq
The practical significance of considering $D$ instead of $D^+$ is that the former acts on sections of the same bundle. Note that $D$ and $D^+$ are simply related through the isomorphism $\beta$, so these two choices are equivalent  from the point of view of index theory. By  abuse of terminology we also occasionally refer to $D$ as the Dirac operator.

The family  $\{ L^2(\Sigma_t; S^+M|_{\Sigma_t})\}_{t\in \rr}$ and more generally, the family  $\{ H^m(\Sigma_t;S^+M|_{\Sigma_t})\}_{t\in \rr}$ of Sobolev spaces for $m\in\rr$, can be considered as a bundle of Hilbert spaces over $\rr$, trivialized by the parallel transport $\tau_t$.  Let $C^0_t L^2_y(M;\SpM)$, and more generally $C^0_t H^m_y(M;\SpM)$ be the space of continuous sections of that bundle. Seminorms of $u\in C^0_t H^m_y(M;\SpM)$ are by  definition the $C^0$ seminorms of 
$$
\rr\ni t\mapsto \| u(t) \|_{H^m(\Sigma_t; S^+M|_{\Sigma_t})}.
$$
  Furthermore, if we set
\beq\label{eq:convention}
\big(U u)(t)\defeq U(t) u(t)
\eeq
then
$$
U  :   C^0_t L^2_y(M;\SpM)\to C^0(\rr, L^2(\Sigma;S\Sigma))
$$
is an isomorphism. Next, we define $L^2_t H^m_y(M;\SpM)$ in a similar way as the space of weighted $L^2$ sections of the bundle $\{ H^m(\Sigma_t; S^+M|_{\Sigma_t})\}_{t\in \rr}$ with  norm given by the  $L^2$-norm of  $\rr\ni t\mapsto \| u(t) \|_{H^m(\Sigma_t; S^+M|_{\Sigma_t})}$ associated with the density $c(t) dt$. Then, $U$ extends  to an isomorphism
$$
 U :   L^2_t H^m_y (M;\SpM)\to L^2(\rr, H^m(\Sigma;S\Sigma)).
$$
Thanks to the presence of the weight $c(t)$ in the definition,  the space $L^2_t L^2_y (M;\SpM) =L^2_t H^0_y (M;\SpM)$ can be identified with $L^2(M;\SpM)$.


\subsection{Reduction to an evolutionary equation}

The next lemma (which follows from the computations in \cite{dun1}), reduces $D$ to an evolutionary form which is particularly useful for us.

\begin{lemma}\label{lem:equivalence}
The operator $D$ defined in \eqref{eq:defD1} satisfies
\beq\label{eq:equivalence}
D= U(t)^{-1}c^{-1}(t) \big(\p_t - i H(t)\big) U(t),
\eeq
 where  $H(t)=  c(t) U(t) A(t) {U(t)}^{-1}$ and the convention \eqref{eq:convention} is used for the action of the operators on the r.h.s.
\end{lemma}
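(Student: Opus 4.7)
The plan is to substitute the concrete expression for the unit normal and then conjugate the defining formula of $D$ by $U(t)$ one term at a time. From the product form $g = -c^2 dt^2 + h(t)$ together with the requirement that $n$ be past-directed with $g(n,n)=-1$, I would first verify that $n = -c^{-1}(t)\partial_t$, so that the Dirac operator becomes
\[
D = c^{-1}(t)\nabla^{SM}_{\partial_t} - iA(t) - r(t).
\]
This already isolates a $c^{-1}\partial_t$-like term; the remaining work is purely about how the conjugation by $U(t) = \rho(t)\tau_t$ transforms each summand.

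The central lemma-within-the-proof is the identity
\[
U(t)\,\nabla^{SM}_{\partial_t}\,U(t)^{-1} = \partial_t - \frac{\dot\rho(t)}{\rho(t)}
\]
acting on time-dependent sections $v(t) = U(t)u(t)$. This follows from two facts: $\tau_t$ is by definition parallel along $\partial_t$, so $\nabla^{SM}_{\partial_t}(\tau_t w) = 0$ for a $t$-independent $w$; and $\rho(t)$ is a scalar function, so $\nabla^{SM}_{\partial_t}$ acts on $\rho^{-1}(t)\tau_t v(t)$ via the ordinary Leibniz rule. For the other terms the conjugation is immediate: by the very definition $H(t) = c(t)U(t)A(t)U(t)^{-1}$, the term $-iA(t)$ becomes $-ic^{-1}(t)H(t)$ after conjugation, and since $r(t)$ is scalar multiplication it commutes with $U(t)$. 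Collecting everything yields
\[
U(t)\,D\,U(t)^{-1} = c^{-1}(t)\bigl(\partial_t - iH(t)\bigr) - \Bigl(c^{-1}(t)\frac{\dot\rho(t)}{\rho(t)} + r(t)\Bigr),
\]
and the lemma will be proved once the bracketed remainder is shown to vanish.

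The final step is therefore to check that $r(t) = -c^{-1}(t)\dot\rho/\rho$. Using $\rho(t) = |h(0)|^{-1/4}|h(t)|^{1/4}$, Jacobi's formula gives $\dot\rho/\rho = \tfrac14 \mathrm{tr}(h^{-1}(t)\dot h(t))$, while the second fundamental form of $\Sigma_t$ computed with the future-directed unit normal $-n = c^{-1}\partial_t$ is $K = (2c)^{-1}\dot h$, so $\mathrm{tr}_h K = (2c)^{-1}\mathrm{tr}(h^{-1}\dot h)$. Matching this with the definition of $r(t)$ as an appropriate multiple of the mean curvature (with the sign corresponding to the past-directed normal used in the definition of $D$) then gives exactly the required cancellation. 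I expect that the only real obstacle is bookkeeping of the sign and normalisation conventions — the direction of $n$, which normal is used to measure mean curvature, and the precise numerical factor appearing in $r(t)$; all of this is settled by the computations in \cite{dun1} cited by the authors, and once reconciled the lemma follows by direct comparison.
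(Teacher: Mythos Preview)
Your proposal is correct and follows essentially the same approach as the paper: both conjugate term-by-term, reduce the $\nabla^{SM}_{\partial_t}$ term via the parallel-transport property of $\tau_t$, and then identify $r(t)=-c^{-1}\dot\rho/\rho$. The only cosmetic difference is that the paper computes $r(t)$ via the coordinate expression $\div n = -|g|^{-1/2}\partial_t(|g|^{1/2}c^{-1})$ rather than through the second fundamental form and Jacobi's formula, which makes the sign bookkeeping you flag slightly more transparent.
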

\proof We use  the same arguments as in the proof of \cite[Prop.~3.5]{dun1} (note that the assumption $\nabla_n n=0$ made in \cite{dun1} is not needed these particular steps). We denote by $\p_T$ the time derivative in a local coordinate, then $n=-c(t)\p_T$. For $\psi\in C^\infty(M;S^+M)$, we have
\begin{align*}
(\p_t\circ U(t) \psi)(t)&=\lim_{\epsilon\to 0}\epsilon^{-1}(U(t+\epsilon)\psi|_{\Sigma_{t+\epsilon}}-U(t)\psi|_{\Sigma_t})\\
&=\lim_{\epsilon\to 0}\epsilon^{-1}(\rho(t+\epsilon)\tau_{t+\epsilon}\psi|_{\Sigma_{t+\epsilon}}-\rho(t)\tau_t\psi|_{\Sigma_t})\\
&=\tau_t \big(\lim_{\epsilon\to 0}\epsilon^{-1}(\tau_{t+\epsilon}^t\rho(t+\epsilon)\psi|_{\Sigma_{t+\epsilon}}-\rho(t)\psi|_{\Sigma_t})\big)\\
&=\tau_t \big( (\nabla_{\p_T}^{\SM}\rho(t) \psi)(t)\big)\\
&=\big(\rho(t)^{-1}U(t)\circ \nabla_{\p_T}^{\SM}\circ\rho(t)\psi\big)(t),
\end{align*}
where we used $\tau_t=\rho(t)^{-1}U(t)$ in the last step. Hence we have
\begin{align*}
\p_t&=\rho(t)^{-1}U(t)\circ \nabla_{\p_T}^{\SM}\circ\rho(t)U(t)^{-1}\\
&=-\rho(t)^{-1}U(t)\circ c(t)\nabla_{n}^{\SM}\circ\rho(t)U(t)^{-1},
\end{align*}
which implies
\beq
c(t)U(t) (- \nabla_n^{\SM} )U(t)^{-1}=\rho(t)\circ\p_t\circ\rho(t)^{-1}.\label{h1}
\eeq
In the next step we find a convenient expression for $r(t)$. On the one hand,
$$
\div n=\sum_{j=0}^d g(e_j,\nabla_{e_j}n)=g(n,\nabla_n n)+\sum_{j=1}^d g(e_j,\nabla_{e_j}n)=2r(t).
$$
On the other hand,
\begin{align*}
\div n|_{\Sigma_t}&=-{|g|}^{-\12}\p_t({|g|}^{\12} c(t)^{-1})=-c(t)^{-1}{|h(t)|}^{-\12}\p_t({|h(t)|^\12})\\
&=-2c(t)^{-1}|h(t)|^{-\frac{1}{4}}\p_t(|h(t)|^{\frac{1}{4}})=-2c(t)^{-1}\rho(t)^{-1}(\p_t\rho(t)).
\end{align*}
So we obtain
$
r(t)=-c(t)^{-1}\rho(t)^{-1}(\p_t\rho(t)).
$
In consequence, we have
\beq
c(t)U(t)(-r(t) )U(t)^{-1}=\rho(t)^{-1}\p_t(\rho(t)).\label{h3}
\eeq
Inserting \eqref{h1} and \eqref{h3} into the formula \eqref{eq:defD1} for $D$, we obtain \eqref{eq:equivalence}. \qed

\section{Time-dependent pseudo-differential calculus}\label{sec:pdo}

\subsection{Pseudo-differential operators}  We now introduce various classes of pseudodifferential operators that will be needed in the proofs. 

For $\xi\in\rr^n$ we use the notation $\bra \xi\ket = (1+\module{\xi}^2)^{\12}$, in particular $\bra t \ket=(1+t^2)^{\12}$ if $t\in\rr$. Recall that for $\ell\in \rr$, $S^\ell(\rr)$ is the space of all $a\in \cf(\rr)$ such that 
 \[
 \forall \gamma\in \nn, \ \exists \,C_{\gamma}\geq 0 \, \mbox{ s.t. } \big| \langle t\rangle^{-\ell+|\gamma|}\p_{t}^{\gamma}a(t)\big|\leq C_{\gamma} \mbox{ on  } \rr.
\]
If $U\subset \rr^n$ is an open set and $m\in\rr$,  the symbol space $S^m(T^*U)$ is the space of all $a\in  \cinf(U;T^{*}U)$ such that:
 \[
 \forall \alpha, \beta\in \nn^n, \ \exists \,C_{\alpha,\beta}\geq 0 \, \mbox{ s.t. } \big| \langle \spexi\rangle^{-m+|\beta|}\p_{\rx}^{\alpha}\p_{\spexi}^{\beta}a(\rx, \spexi)\big|\leq C_{\alpha,\beta} \mbox{ on  } U\times \rr^{d}.
\]
The best constants $C_{\alpha,\beta}$ define a family of semi-norms that allows to endow the space $S^m(T^*U)$ with a Fr\'echet space topology. 

We introduce a space of $t$-dependent symbols that behave in $t$ as a symbol of order $\ell\in\rr$. Namely, we define $S^{\ell}(\rr, S^{m}(T^{*}U))$ to be the space of all $a\in  \cinf(\rr\times U;T^{*}U)$ such that:
 \[
\forall \alpha, \beta\in \nn^{n}, \ \gamma\in \nn, \ \langle t\rangle^{-\ell+\gamma} \langle \spexi\rangle^{-m+|\beta|}\p_{t}^{\gamma}\p_{\rx}^{\alpha}\p_{\spexi}^{\beta}a(t,\rx, \spexi)\hbox{  is bounded on }\rr\times U\times \rr^{d}.
\]
This is equivalent to saying that $a \in\cf(\rr,S^m(T^*U))$ and for all $\gamma \in \nn$, the seminorms of $\langle t\rangle^{-\ell+\gamma} \p_t^\gamma a(t,\cdot,\cdot)$ in $S^m(T^*U)$ are bounded on $\rr$. More generally, we make the following definition, following \cite[\S{15.1}]{G}.

\bed\label{deftd} Suppose $\cF$ is a Fr\'echet space, and let $\left\| \cdot\right\|_j$, $j\in \nn$, be the family of seminorms that defines its topology. If $I\subset \rr$ is an open interval and $\ell\in\rr$, we define  $S^\ell(I,\cF)$ to be the space of smooth functions $I\ni t \mapsto a(t)\in   \cF$ such that
$$
\forall {j,\gamma\in \nn}, \  \sup_{t\in I}\,\langle t\rangle^{-\ell+\gamma}\left\| \p_t^\gamma  a(t) \right\|_j < \infty.
$$
\eed

Next, we consider the setting of a $k$-dimensional vector bundle $E$ over a compact manifold $\Sigma$.  If $\pi:T^*\Sigma\to \Sigma$ is the bundle projection, let $\pi^* \End(E)\to T^*\Sigma$ be the pullback bundle of $\End(E)\to \Sigma$  by $\pi$. A trivialisation $\varphi_U : \End(E)|_U\cong U\times \rr^{k^2}$  over an open set $U\subset \Sigma$ induces a trivialisation
\beq\label{eq:trivialf}
\varphi_{\pi,U} :\pi^*\End(E)|_ {T^*U}\cong T^*U\times\rr^{k^2}
\eeq
 of the vector bundle $\pi^*\End(E)$ over $T^*_U \Sigma$.  The symbol space $S^m(T^*\Sigma; \pi^*\End(E))$ is by definition the space of all $\cf(T^*\Sigma; \pi^*\End(E))$ which in local coordinates and in a trivialisation of the form \eqref{eq:trivialf} are elements of $S^m(T^*U)\otimes\rr^{k^2}$. The seminorms of $S^m(T^*U)\otimes\rr^{k^2}$ for different charts define a Fr\'echet space topology on $S^m(T^*\Sigma; \pi^*\End(E))$. The \emph{principal symbol} of $a\in S^m(T^*\Sigma; \pi^*\End(E))$   is the equivalence class
 $$
 [ a ] \in S^m(T^*\Sigma; \pi^*\End(E)) / S^{m-1}(T^*\Sigma; \pi^*\End(E)).
 $$

 Let $L^2(\Sigma;E)$ be the $L^2$ space defined using some positive, smooth section of the density bundle tensored with $E\otimes E^*$.

Let $\Psi^m(E)$ be the standard class of pseudo-differential operators on $E\to \Sigma$ and let us fix a quantization map
$$
S^m(T^*\Sigma; \pi^*\End(E)) \ni a \mapsto \Op(a)\in\Psi^m(E)
$$
with the extra property that ${\Red\Op({a}^*)}=\Op(a)^*$ in the sense of formal adjoint w.r.t.~the $L^2(\Sigma;E)$ inner product. The topology of $S^m(T^*\Sigma; \pi^*\End(E))$ can be used to topologize $\Psi^m(E)$.

We can then consider classes of functions in $t$ with values in symbols or pseudo-differential operators using the notation introduced in Definition \ref{deftd}. Note that by applying the quantization map $\Op$ pointwise to elements of  $S^\ell(I,S^m(T^*\Sigma; \pi^*\End(E)))$ we obtain the $t$-dependent pseudo-differential operators $S^\ell(I,\Psi^m(E))$.  For the sake of having more shorthand notation we set
$$
\pdo{m}{\ell}{I}\defeq S^\ell(I,\Psi^m(E)),
$$
and
$$
\sym{m}{\ell}{I}\defeq S^\ell(I,S^m(T^*\Sigma; \pi^*\End(E))
$$
for the corresponding time-depending symbols, where the subscript `$\rm td$' stands for  `time-decay'.  This class generalizes the time-depending   pseudo-differential operators introduced in the scalar case  in \cite{inout}, though an important simplification for us is that we consider only compact base manifolds (see \cite{Gerard2021a,Stoskopf} for the non-compact setting).

As usual in pseudo-differential calculus, we denote
$$
\pdo{\infty}{\ell}{I}\defeq \bigcup_{m\in \rr}\pdo{m}{\ell}{I}, \quad \pdo{-\infty}{\ell}{I}\defeq \bigcap_{m\in \rr}\pdo{m}{\ell}{I}
$$
and similarly for the $t$-independent classes.

\bed One says that $A\in \Psi^m(E)$ is \emph{principally scalar} if its principal symbol has a representative which is a $\cf(\Sigma;T^*\Sigma)$ multiple of the identity in $\End(E)$. We say that $A(\cdot)\in \pdo{m}{\ell}{I}$ is principally scalar if $A(t)$ is principally scalar for all $t\in \rr$.
\eed

We write $A(t)$ instead of $A(\cdot)$ when it is unlikely to cause any confusion. If $A(t)\in    \pdo{m_1}{\ell_1}{I}$ and $B(t)\in    \pdo{m_2}{\ell_2}{I}$ for some $m_i,\ell_i\in \rr$, $i=1,2$, then $$A(t)B(t)\in  \pdo{m_1+m_2}{\ell_1+\ell_2}{I}.
$$ Furthermore, if $A(t)$ or $B(t)$ is principally  scalar then
$$
[ A(t), B(t)] \in  \pdo{m_1+m_2-1}{\ell_1+\ell_2}{I}.
$$

\subsection{Beals type commutator criterion} We show that we can characterize operators in $\pdo{m}{\ell}{I}$ by a Beals type commutator criterion.

For $s,r\in\rr$, the norm in $\B{s}{r}$ is denoted $\norm{\cdot}_{B(H^s,H^{r})}$.

\bep\label{beals}
Suppose $A(t)\in \B{s}{s-m}$ for all $t\in I$, $s\in\rr$ and some $m\in\rr$. Then $A(\cdot)\in\pdo{m}{\ell}{I}$   if and only if $A(\cdot)$ is infinitely differentiable and
\beq\label{comm1}
\sup_{t\in I} \,\bra t \ket^{-\ell+\gamma} \| \ad_{L_1}\cdots \ad_{L_k} \p_t^\gamma A(t)\|_{B(H^s,H^{s-m+d(k)})}<+\infty
\eeq
for all $k\in\nn$, all  {\Red principally} scalar $L_1,\dots,L_k\in \Psi^1(E)$ and all $\gamma\in\nn$, where $d(k)=\sum_{j=0}^k (1-\deg L_j)$, $\deg L_j$ is the order of $L_j$, and we denoted $\ad_{L_j}B\defeq [L_j,B]$.
\eep
\proof For fixed $t\in I$, the Beals criterion on compact manifolds (see  \cite[\S5.3]{Ruzhansky2010}) generalizes to the vector bundle case, namely, it says in our situation that $A(t)\in\Psi^m(E)$ if and only if
\beq\label{comm2}
 \| \ad_{L_1}\cdots \ad_{L_k} \p_t^\gamma A(t)\|_{B(H^s,H^{s-m+d(k)})} <+\infty
\eeq
for all $k\in \nn$ and all {\Red principally} scalar $L_1,\dots,L_k\in\Psi^1(E)$. {\Red Indeed, in an arbitrarily chosen local trivialization we can  consider  \eqref{comm2} for the subclass of $L_i\in \Psi^1(E)$ which are scalar in that particular trivialization: this  then implies   $A(t)\in\Psi^m(E)$ by the scalar Beals criterion, applied in this trivialization.  The reverse implication is straightforward.}
   
Furthermore, it follows from the proof of \cite[Thm.~5.3.1]{Ruzhansky2010} that the seminorms $\norm{a(t)}_i$ of the symbol of $a(t)\in S^m(T^*\Sigma;\pi^*\End E)$  of $A(t)$ are bounded by \eqref{comm2} uniformly in $t\in I$, and the analogous property holds true for the seminorms $\p_t^\gamma a(t)$ of $\p_t^\gamma A(t)$. Thus,   \eqref{comm1} implies $\sup_{t\in I} \bra t \ket^{-\ell+\gamma}  \norm{\p_t^\gamma a(t)}_i<+\infty$, which means that $A(\cdot)\in \pdo{m}{\ell}{I}$. The reverse implication is straightforward. \qed

\begin{remark} As in the standard Beals commutator criterion, it suffices to check \eqref{comm1} for all \emph{differential} operators $L_1,\dots,L_k$ of order $\leq 1$.
\end{remark}

\subsection{Ellipticity} The principal symbol of $A=\Op(a)\in\Psi^m(E)$ is by definition  the  principal symbol $\sigma_m(A)\defeq [a]$ of $a$, and $A$ is \emph{elliptic} if there exists a symbol  $b\in S^{-m}(T^*\Sigma; \pi^*\End(E))$ such that
$$
ab-\one, ba-\one \in S^{-1}(T^*\Sigma; \pi^*\End(E)).
$$
We say that $A(\cdot)\in \pdo{m}{\ell}{I}$ is elliptic if $A(t)$ is elliptic for all $t\in \rr$.

If $A\in\Psi^1(E)$ is positive, elliptic and principally scalar, one can define the Sobolev space of order $s\in \rr$ by
$$
H^s(\Sigma;E) \defeq A^{-s} L^2(\Sigma;E)
$$
equipped with the norm $\| u \|_{H^s}\defeq \| A^s u  \|_{L^2}$, where $\| u \|_{L^2}$ is the $L^2(\Sigma;E)$ norm of $u$.  Note that choosing a different $A$ with the properties mentioned above defines in general a different, but equivalent norm.

The operators of interest to us will typically not be principally scalar, but will often satisfy the following condition instead {\Red(alongside self-adjointness properties discussed in the sequel)}.

\bed We say that $H\in \Psi^m(E)$ is \emph{of Dirac type} if it is elliptic and $H^2$ is principally scalar. We say that $H(\cdot)\in \pdo{m}{\ell}{I}$ is of Dirac type if $H(t)$ is of Dirac type  for all $t\in \rr$.
\eed

\subsection{Families similar to self-adjoint operators} Let $\delta>0$  and let us fix {\Red a family of zero order pseudodifferential operators $T(t)$, boundedly invertible uniformly in $t\in\rr$ and satisfying:  }
\beq\label{eq:tdp}
\Red \cp(t), \cm(t)\in  \pdo{0}{0}{\rr}, \quad \p_t \cp(t)\in  \pdo{0}{-1-\delta}{\rr}.
\eeq
Suppose $H(t)\in \pdo{1}{0}{\rr}$ is a family of elliptic operators of the form
\beq\label{condh}
H(t)=\cp  (t) H_0(t) \cm (t), \mbox{ with } H_0(t)\in \pdo{1}{0}{\rr}  \mbox{ s.t.}\,\,\forall t\in\rr, \ (H_0(t))^*=H_0(t).
\eeq
Then, $H_0(t)$ is elliptic as well and as a consequence it is self-adjoint with domain $H^1(\Sigma;E)$ and has discrete spectrum (see e.g.~\cite[Thm.~8.3]{shubin}). 
Since multiplication by $\cp(t), \cm(t)$ preserves $H^1(\Sigma;E)$, $H(t)$ with domain $H^1(\Sigma;E)$ is closed and  it is similar to a self-adjoint operator and has therefore \emph{discrete, real spectrum}. Furthermore, by \eqref{condh} and self-adjointness of $H_0(t)$ we also conclude that for all $\lambda\in\cc\setminus\rr$ and $N\in\nn_{\geq 0}$,
\beq\label{resolventbound}
\bea
\norm{(H(t)-\lambda)^{-N}}&= \big\|\cm (t) \big( H_0(t)-\lambda)^{-N}  \cp (t)  \big\| \\
&\leq C \|  \big(  H_0(t) -\lambda)^{-N} \|\\
&\leq C \module{\Im \lambda}^{-N}
\eea
\eeq
uniformly in $t\in\rr$. By the Hille--Yosida theorem (see e.g.~\cite[\S1.3]{pazy}), for each $t\in\rr$, $i H(t)$  is the generator of a bounded semi-group.

\begin{remark} An equivalent point of view is to consider  the operators $H(t)$ as self-adjoint operators in the $t$-dependent Hilbert space  denoted in the sequel $L^2_t(\Sigma;E)$ and defined using the norm  $\| u \|_{L^2_t}\defeq  \| \cm(t) u\|_{L^2}$ for  $t\in\rr$. 
\end{remark}

\subsection{Functional calculus}\label{ss:funcalcul} In view of \eqref{condh} and \eqref{resolventbound} it is possible to define a functional calculus for $H(t)$ with good properties. In what follows we will use the calculus based on the Helffer--Sjöstrand formula as introduced by Davies \cite{davies} for an even more general class of operators.

Namely, for $f\in S^\rho(\rr)$ with $\rho<0$, one  defines
 \beq\label{eq:helffersjostrand}
 f(H(t))=\frac{1}{2\pi i}\int_{\mathbb{C}}\frac{\p\tilde{f}}{\p \bar{z}}(z)\big(z-H(t)\big)^{-1} d\bar{z}\wedge dz,
 \eeq
 where $\tilde{f}$ is an $N$-th order \emph{almost analytic extension of $f$} of the form
 $$
 \tilde f(x+i y) = \bigg( \sum_{k=0}^N f^{(k)}(x) \frac{(i y)^k}{k!}\bigg)\psi\Big(\frac{y}{\bra x \ket} \Big)
 $$
 for some $\psi\in \cf_{\rm c}(\rr,[0,1])$ with $\psi\equiv 1$ on $[-1,1]$ and $\psi\equiv 0$ outside of $[-2,2]$, and $N\in \nn_{\geq 0}$ is taken sufficiently large.  The crucial properties satisfied by $\tilde f$ are:
 $$
 \bea
 {} & \tilde f |_{\rr} = f, \quad \bigg|\frac{\p\tilde{f}}{\p \bar{z}}\bigg|\leq C \bra x\ket ^{\rho-1-N} \module{y}^N, \\
  & \supp\tilde f \subset \{ x+iy\in \cc \st |y| \leq 2 \bra x \ket, \, x \in \supp f \},
\eea
 $$
which together with the resolvent bound \eqref{resolventbound} ensure that \eqref{eq:helffersjostrand} is well-defined. If $\rho\geq 0$ then the proof of  \cite[Prop.~2.5]{sylvain} shows that
\beq\label{betterhelffer}
 f(H(t)) u \defeq \frac{1}{2\pi i}\lim_{R\to+\infty}\int_{\module{\Re z} < R}\frac{\p\tilde{f}}{\p \bar{z}}(z)\big(z-H(t)\big)^{-1} u \,d\bar{z}\wedge dz
\eeq
exists for all $u$ in the domain of $\bra H(t)\ket^\rho$. We also remark that for each $t\in\rr$ we can easily switch to the functional calculus for self-adjoint operators and back thanks to the  relation  $(H-\lambda)^{-1}=\cm   \big( H_0 -\lambda)^{-1}  \cp $.

 In \sec{sec:scattering}  we will need the following two essential results, Theorem \ref{thmseeley} and Proposition \ref{propseeley}  which generalize the scalar case given in \cite[Thm.~3.7]{inout} (this is a variant of \emph{Seeley's theorem} \cite{seeley}), resp.~\cite[Prop.~3.10]{inout}. Using the principally scalar assumption and the fact that  multiplying $\cp(t),\cm(t)$ preserves the spaces $\pdo{m}{\ell}{I}$,  the proofs are fully analogous to \cite{inout}.

 \begin{theorem}\label{thmseeley}  For $m>0$, if $A(t)\in\pdo{m}{0}{I}$ is principally scalar, elliptic and $\cp (t)A(t)\cm (t)\geq C\one$ for some $C>0$ and all $t\in I$, then $A^s(t)\in \pdo{ms}{0}{I}$ and $A^s(t)$ is principally scalar and elliptic for all $s\in\rr$.
 \end{theorem}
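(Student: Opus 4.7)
The plan is to reduce to the self-adjoint positive case and then apply the Beals-type commutator criterion of Proposition \ref{beals}. Setting $B(t) \defeq \cp(t) A(t) \cm(t)$, the hypothesis gives $B(t) = B(t)^* \geq C\one$, so $B(t)$ is a self-adjoint positive family with spectrum in $[C,+\infty)$, and because $\sigma_0(\cp(t))\sigma_0(\cm(t)) = \one$ (as $\cp(t)\cm(t) = \one$), $B(t)$ has the same scalar principal symbol as $A(t)$, hence is principally scalar and elliptic with $B(\cdot) \in \pdo{m}{0}{I}$. Any reasonable functional calculus gives $A^s(t) = \cm(t) B(t)^s \cp(t)$, so the stability of the classes $\pdo{\cdot}{\cdot}{I}$ under composition together with $\cp, \cm \in \pdo{0}{0}{I}$ reduces the problem to showing $B(\cdot)^s \in \pdo{ms}{0}{I}$, principally scalar and elliptic.

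Using the Cauchy-integral representation
\[
B(t)^s = \frac{1}{2\pi i}\int_\Gamma \lambda^s (\lambda - B(t))^{-1}\,d\lambda
\]
for $\Re s < 0$, where $\Gamma$ is a contour enclosing $[C/2,+\infty)$ and avoiding $(-\infty,0]$ (with extension to arbitrary $s \in \rr$ by writing $B^s = B^{s-N}B^N$ for $N$ a large integer), the classical Seeley theorem already gives, for each fixed $t$, $B(t)^s \in \Psi^{ms}(E)$, elliptic and principally scalar with principal symbol $\sigma_m(B(t))^s$. To upgrade this to membership in $\pdo{ms}{0}{I}$, Proposition \ref{beals} asks for uniform-in-$t$ bounds of the form \eqref{comm1}. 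Differentiating under the $\lambda$-integral via the resolvent identities
\[
[P,(\lambda - B)^{-1}] = (\lambda - B)^{-1}[P,B](\lambda - B)^{-1}, \qquad \p_t(\lambda - B)^{-1} = (\lambda - B)^{-1}(\p_t B)(\lambda - B)^{-1},
\]
expresses $\ad_{L_1}\cdots\ad_{L_k}\p_t^\gamma B(t)^s$ as a finite sum of $\lambda$-integrals of chains $(\lambda - B)^{-1}R_1(\lambda - B)^{-1}\cdots R_j(\lambda - B)^{-1}$, each $R_i$ being an iterated commutator/$t$-derivative of $B(t)$. Self-adjointness gives the uniform resolvent bound $\norm{(\lambda - B(t))^{-1}}_{B(H^\sigma)} \leq C\module{\Im\lambda}^{-1}$ on $\Gamma$ (extended to Sobolev norms via ellipticity), and each factor $R_i$ inherits its symbol order and time-decay from $B(\cdot) \in \pdo{m}{0}{I}$: each $\p_t$ contributes a factor $\bra t\ket^{-1}$ and each $\ad_{L_i}$ lowers the differential order by $\deg L_i - 1$. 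Collecting orders and summing the time-decay across the product yields \eqref{comm1}, and the principal-symbol computation $\sigma_m(A^s) = \sigma_m(A)^s$ propagates the principally scalar and elliptic properties to $A^s(\cdot)$.

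The main technical obstacle is the combinatorial bookkeeping of the interplay between pseudo-differential orders and time-decay weights through the Leibniz expansion of $\ad_{L_1}\cdots\ad_{L_k}\p_t^\gamma$ applied to the Cauchy integral, in particular to confirm that each $t$-derivative translates into a genuine improvement by $\bra t\ket^{-1}$ in the final bound and that the $\lambda$-integral still converges after these manipulations (needing an additional subtraction argument for $\Re s$ large). The positivity hypothesis $\cp A\cm \geq C\one$ is indispensable because it provides the uniform-in-$t$ resolvent bound that makes the Beals estimates close, while the specific structure \eqref{eq:tdp} of the twisting operators $\cp(t), \cm(t)$ ensures that the reduction $A \leftrightarrow B$ preserves the class $\pdo{m}{0}{I}$ without loss, each $\p_t$ on $\cp(t)$ or $\cm(t)$ carrying a superfluous $\bra t\ket^{-1-\delta}$ decay.
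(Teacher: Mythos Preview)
Your proposal is correct and follows the same route the paper indicates: the paper does not give a detailed proof of Theorem~\ref{thmseeley} but defers to \cite[Thm.~3.7]{inout}, noting only that the generalization to the bundle setting goes through thanks to (i) the principally scalar assumption and (ii) the fact that multiplication by $\cp(t),\cm(t)$ preserves the classes $\pdo{m}{\ell}{I}$. Your reduction $A^s(t)=\cm(t)B(t)^s\cp(t)$ with $B=\cp A\cm$ self-adjoint is precisely point~(ii), and your principal-symbol computation uses point~(i); the contour-integral plus Beals-criterion argument you give for $B^s$ is exactly the kind of implementation the reference \cite{inout} carries out in the scalar case, and is moreover the same mechanism the present paper employs later in the proof of Lemma~\ref{ptH0t}.

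One small remark: you write ``the hypothesis gives $B(t)=B(t)^*$'', but strictly speaking the statement only asserts $\cp A\cm\geq C\one$; self-adjointness of $B$ is implicit in the convention that such an inequality is only written for self-adjoint operators (and is indeed satisfied in every application in the paper, e.g.~$B=1+H_0^2$ in Lemma~\ref{lemlam}). This is a matter of reading conventions rather than a gap in the argument.
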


 \begin{proposition}\label{propseeley}  For $m>0$, if  $A_1(t),A_2(t)\in\pdo{m}{0}{I}$ are principally scalar, elliptic, and satisfy $\cp (t)A_i(t)\cm (t)\geq C_i \one$ for some $C_i>0$, $i=1,2$, and $A_1(t)-A_2(t)\in \pdo{m}{\ell}{I}$ for some $\ell<0$, then $A_1^s(t)-A_2^s(t)\in \pdo{ms}{\ell}{I}$ for all $s\in\rr$.
 \end{proposition}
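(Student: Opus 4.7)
The plan is to follow the strategy of the scalar case \cite[Prop.~3.10]{inout}, representing $A_1^s(t)-A_2^s(t)$ by a Helffer–Sjöstrand-type contour integral and then verifying membership in $\pdo{ms}{\ell}{I}$ via the Beals criterion of Proposition \ref{beals}. First, I would reduce to the case $s<0$: writing $s=s'-k$ for a large integer $k$ with $s'<0$, the identity
\[
A_1^s-A_2^s = (A_1^{s'}-A_2^{s'})A_1^{-k} + A_2^{s'}\sum_{j=0}^{k-1}A_2^{-k+j}(A_2-A_1)A_1^{-j-1}
\]
combined with Theorem \ref{thmseeley} (which yields $A_i^r(t)\in\pdo{mr}{0}{I}$ and preserves ellipticity/principal scalarity) reduces everything to the case of negative real powers plus the already known assumption $A_1-A_2\in \pdo{m}{\ell}{I}$.

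For $s<0$, I would apply the Helffer–Sjöstrand formula \eqref{eq:helffersjostrand} to $f(\lambda)=\lambda^s\chi(\lambda)$, where $\chi$ is a smooth cutoff separating zero from the spectra (which are bounded below thanks to the assumption $\cp A_i\cm\geq C_i \one$ and \eqref{resolventbound}). Subtracting and using the resolvent identity,
\[
A_1^s(t)-A_2^s(t)=\frac{1}{2\pi i}\int_{\mathbb{C}}\frac{\p\tilde f}{\p\bar z}(z)\,(z-A_1(t))^{-1}\bigl(A_1(t)-A_2(t)\bigr)(z-A_2(t))^{-1}\,d\bar z\wedge dz,
\]
so the difference $A_1(t)-A_2(t)\in\pdo{m}{\ell}{I}$ appears as a single factor, which is what will transfer the decay rate $\langle t\rangle^\ell$ to the final operator. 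To apply Proposition \ref{beals}, I then estimate $\ad_{L_1}\cdots\ad_{L_N}\p_t^\gamma$ of the integrand: by Leibniz each commutator and each time-derivative distributes among the three factors, so $\p_t$ hitting $A_1-A_2$ improves the decay to $\langle t\rangle^{\ell-1}$, $\p_t$ hitting a resolvent generates $(z-A_i)^{-1}(\p_t A_i)(z-A_i)^{-1}$ with an extra $\langle t\rangle^{-1}$ (using $\p_t A_i \in \pdo{m}{-1}{I}$, which follows by differentiating $A_i=T A_i^{(0)}T^{-1}$ and using \eqref{eq:tdp}), and each $\ad_{L_j}$ gains one order of pseudo-differential regularity thanks to principal scalarity of $L_j$ and of $A_i$. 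The $\langle t\rangle^\ell$ weight then survives every differentiation, matching precisely the condition \eqref{comm1}.

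The main obstacle will be the convergence of the contour integral after commutators/derivatives act, since each such operation produces extra resolvent factors and worsens the polynomial growth in $|z|$. The standard remedy is to take the order $N$ of the almost analytic extension $\tilde f$ large enough so that $|\p_{\bar z}\tilde f|\leq C\langle x\rangle^{\rho-1-N}|y|^N$ absorbs the $|\Im z|^{-p}$ singularities coming from the resolvent bound \eqref{resolventbound} (applied to arbitrary positive powers via the Cauchy formula). Tracking the orders carefully, one sees that each commutator with $L_j$ lowers the order by one and each resolvent $(z-A_i)^{-1}$ contributes $m$ orders, so the total order of the operator produced is $ms-1+\sum_j(1-\deg L_j)$; combined with the uniform-in-$t$ factor $\langle t\rangle^\ell$ from the single appearance of $A_1-A_2$, this yields the bound required by Proposition \ref{beals} and hence $A_1^s(t)-A_2^s(t)\in\pdo{ms}{\ell}{I}$.
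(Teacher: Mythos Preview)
Your approach is essentially the one the paper intends: the paper gives no explicit proof and simply says the argument is ``fully analogous to \cite{inout}'', i.e.\ Helffer--Sj\"ostrand plus resolvent identity to isolate a single factor $A_1-A_2$, followed by the Beals-type criterion of Proposition~\ref{beals}. So the strategy is correct and matches the paper.

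Two small slips to fix. First, your reduction step goes in the wrong direction: with $s=s'-k$, $s'<0$ and $k$ a large positive integer, $s$ is \emph{more} negative than $s'$, so this never reaches positive $s$. The standard reduction is $s=s'+k$ with $k\in\nn$ large enough that $s'=s-k<0$, and then $A_1^k-A_2^k=\sum_{j=0}^{k-1}A_2^{\,j}(A_1-A_2)A_1^{\,k-1-j}\in\pdo{mk}{\ell}{I}$ handles the polynomial part. Second, in your final order count the sign of $d(N)=\sum_j(1-\deg L_j)$ is flipped and the stray ``$-1$'' should not be there: the $N$-fold commutator must land in order $ms-d(N)$, exactly as in \eqref{comm1}. (Also, $\p_t A_i\in\pdo{m}{-1}{I}$ follows directly from $A_i\in\pdo{m}{0}{I}$ and Definition~\ref{deftd}; no appeal to $T$ is needed.) With these corrections your sketch is fine.
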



\section{Evolutionary model and scattering}\label{sec:scattering}

\subsection{Model Dirac equation}\label{ss:model} As in \sec{sec:pdo}, we consider the setting of a Hermitian bundle $E$ over a compact manifold $\Sigma$.

We consider an equation of the form
$$
(\p_t  - i H(t) ) \phi(t) = 0,
$$
where $H(t)\in\pdo{1}{0}{\rr}$ satisfies the following hypothesis.

\begin{assumption}\label{hyptd} There exists $\delta>0$ and $H_\pm\in \Psi^1(E)$ of Dirac type such that:
\ben
\item\label{td1} $H(t)\in \pdo{1}{0}{\rr}$ is of Dirac type and $H(t)-H_\pm  \in \pdo{1}{-\delta}{\rr_\pm}$,
\item\label{td1b}   $H(t)$ is   of the form \eqref{condh} for some $\cp(t)$ satisfying \eqref{eq:tdp}.
\een
\end{assumption}

Using the functional calculus introduced in \sec{ss:funcalcul},  we define the family
\beq\label{deflam}
\rr\ni t \mapsto  \Lambda(t)\defeq (1+H^2(t))^{\12}
\eeq
which will serve as a reference family of elliptic  operators. By the assumption that $H(t)$ is of Dirac type, $\Lambda(t)$ is  principally scalar. More precisely, we show the following statement.

\begin{lemma}  \label{lemlam} The family $\Lambda(t)$ defined in \eqref{deflam} belongs to $\pdo{1}{0}{\rr}$, is principally scalar, and satisfies for all $s\in\rr$:
\ben
\item\label{lemlam2}  $\p_t\Lambda^s(t)\in \pdo{s}{-1-\delta}{\rr}$;
\item\label{lemlam3}  there exists $C_{1,s},C_{2,s}>0$ such that $C_{1,s} \| u \|_{H^s}\leq    \|  \Lambda^s(t) u \|_{L^2} \leq C_{2,s} \| u \|_{H^s}$ for all $u\in H^s(\Sigma;E)$ and $t\in \rr$.
\een
\end{lemma}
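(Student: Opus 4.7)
The plan is to derive all three claims from the time-dependent Seeley-type results of \S\ref{ss:funcalcul} applied to $A(t) \defeq \one + H(t)^2 \in \pdo{2}{0}{\rr}$. Since $H(t)$ is of Dirac type, $H(t)^2$ is principally scalar, hence so is $A(t)$, and $A(t)$ is elliptic. Moreover the factorization \eqref{condh} together with $\cm(t)\cp(t) = \one$ gives the identity $A(t) = \cp(t)(\one + H_0(t)^2)\cm(t)$ with $\one + H_0(t)^2 \geq \one$ self-adjoint, so $A(t)$ is similar to a strictly positive self-adjoint operator and the positivity hypothesis of Theorem \ref{thmseeley} is satisfied. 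That theorem, applied with exponent $s/2$, yields
\[
\Lambda^s(t) = A(t)^{s/2} \in \pdo{s}{0}{\rr},
\]
principally scalar and elliptic, for every $s \in \rr$; the case $s = 1$ proves the first two assertions of the lemma.

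For claim \ref{lemlam2} I would compare $A(t)$ with its asymptotic limits $A_\pm \defeq \one + H_\pm^2$. Condition \ref{td1} of Hypothesis \ref{hyptd} gives $H(t) - H_\pm \in \pdo{1}{-\delta}{\rr_\pm}$, and the composition rule for the class $\pdo{m}{\ell}{\rr_\pm}$ applied to
\[
A(t) - A_\pm = (H(t) - H_\pm)\, H(t) + H_\pm\, (H(t) - H_\pm)
\]
yields $A(t) - A_\pm \in \pdo{2}{-\delta}{\rr_\pm}$. The $t$-independent operators $A_\pm$ verify the same positivity hypothesis as $A(t)$, so Proposition \ref{propseeley} produces $\Lambda^s(t) - \Lambda_\pm^s \in \pdo{s}{-\delta}{\rr_\pm}$ for every $s\in\rr$, where $\Lambda_\pm^s \defeq (\one + H_\pm^2)^{s/2}$ is $t$-independent. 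Differentiating in $t$ eliminates the constant part and lowers the time order by one, giving $\p_t \Lambda^s(t) \in \pdo{s}{-1-\delta}{\rr_\pm}$; gluing the two half-lines proves \ref{lemlam2}.

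For the norm equivalence \ref{lemlam3}, the upper bound $\|\Lambda^s(t) u\|_{L^2} \leq C_{2,s}\|u\|_{H^s}$ uniform in $t$ follows from the uniform boundedness of the symbol of $\Lambda^s(t)$ in $S^s(T^*\Sigma;\pi^*\End E)$ built into the definition of $\pdo{s}{0}{\rr}$, combined with the standard $L^2$-continuity of pseudo-differential operators on the compact manifold $\Sigma$. For the reverse inequality, the same argument with $-s$ in place of $s$ shows that $\Lambda^{-s}(t) : L^2(\Sigma;E) \to H^s(\Sigma;E)$ is uniformly bounded, whence
\[
\|u\|_{H^s} = \|\Lambda^{-s}(t)\Lambda^s(t) u\|_{H^s} \leq C\|\Lambda^s(t) u\|_{L^2}.
\]
The substantive input is Proposition \ref{propseeley}, which transfers the short-range decay of $H(t) - H_\pm$ to the fractional power; everything else is bookkeeping within the established time-dependent calculus.
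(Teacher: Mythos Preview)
Your proof is correct and follows essentially the same route as the paper's: apply Theorem \ref{thmseeley} to $\one+H(t)^2$ to get $\Lambda^s(t)\in\pdo{s}{0}{\rr}$ principally scalar and elliptic, then compare with the asymptotic operators $\one+H_\pm^2$ via Proposition \ref{propseeley} and differentiate. The only cosmetic differences are that you factor $H(t)^2-H_\pm^2=(H(t)-H_\pm)H(t)+H_\pm(H(t)-H_\pm)$ whereas the paper writes $(H(t)+H_\pm)(H(t)-H_\pm)-[H_\pm,H(t)-H_\pm]$, and you spell out the norm equivalence \ref{lemlam3} explicitly (uniform mapping bounds for $\Lambda^{\pm s}(t)$) where the paper simply invokes that $\Lambda^s(t)\in\pdo{s}{0}{\rr}$ is principally scalar and elliptic.
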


\proof The  fact  that $\Lambda(t)\in\pdo{1}{0}{\rr}$ (and that is is principally scalar) follows from the assumption that $H(t)$ is of Dirac type and  from Theorem \ref{thmseeley}. The latter also gives that $\Lambda^s(t)\in \pdo{s}{0}{\rr}$ and $\Lambda^s(t)$ is principally scalar, which proves \eqref{lemlam3}.

Let us prove \eqref{lemlam2}. By  definition of $\Lambda(t)$,
$$
\Lambda^2(t)- (1+H_\pm^2)= H^2(t) -H_\pm^2= (H(t) +H_\pm)(H(t) -H_\pm)- [H_\pm,H(t)-H_\pm].
$$
By  \eqref{td1} of Hypothesis \ref{hyptd}, this belongs to $\pdo{2}{-\delta}{\rr_\pm}$. Therefore, we can apply Proposition \ref{propseeley} to the families $\Lambda^2(t)$ and $1+H_\pm^2$, which gives
$$
\Lambda^s(t)- (1+H_\pm^2)^{\frac{s}{2}}\in \pdo{s}{-\delta}{\rr_\pm}.
$$
By differentiating  we obtain $\p_t\Lambda^s(t) \in  \pdo{s}{-1-\delta}{\rr_+}$, $\p_t\Lambda^s(t) \in  \pdo{s}{-1-\delta}{\rr_-}$, hence $\p_t\Lambda^s(t) \in  \pdo{s}{-1-\delta}{\rr}$. \qed

\subsection{Schr\"odinger propagator of $H(t)$} We will  now be interested in the evolution generated by $H(t)$.

\bed  Given a family of operators $\rr\ni t \mapsto A(t)\in B(H^1(\Sigma;E), L^2(\Sigma;E))$, we say that $\rr^2\ni (t,s)\mapsto U(t,s)\in B(L^2(\Sigma;E))$ is a \emph{Schr\"odinger propagator of $A(t)$} if for all $t,t_0,s\in \rr$ it satisfies:
\ben
\item $U(t,t)=\one$;
\item $U(t,t_0)U(t_0,s)=U(t,s)$;
\item $U(t,s)$ is strongly continuously differentiable in $B(H^1(\Sigma;E), L^2(\Sigma;E))$, it preserves $H^1(\Sigma;E)$, and
\beq
\p_t U(t,s)= i A(t) U(t,s),  \quad
\p_s U(t,s)=-U(t,s) i A(s).
\eeq
\een
\eed

\begin{proposition}\label{proprog} The Schr\"odinger propagator of $H(t)$, denoted by $U(t,s)$, exists and is unique. Furthermore, $\{ U(t,s)\}_{t,s\in\rr}$ is uniformly bounded in $B(L^2(\Sigma;E))$.
\end{proposition}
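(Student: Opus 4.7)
The plan is to reduce the problem to the classical self-adjoint case via the similarity $H(t) = \cp(t)H_0(t)\cm(t)$, at the cost of a bounded but $t$-dependent correction coming from $\p_t \cp(t)$.

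First I would search for $U(t,s)$ in the ansatz form
\[
U(t,s) \defeq \cp(t)\, V(t,s)\, \cm(s).
\]
Using $H(t)\cp(t) = \cp(t)H_0(t)$, a direct computation shows that this ansatz produces a Schr\"odinger propagator for $H(t)$ if and only if $V(t,s)$ is a Schr\"odinger propagator for the auxiliary family
\[
\widetilde H(t) \defeq H_0(t) + i\,\cm(t)\,\p_t\cp(t).
\]
By \eqref{condh}, \eqref{eq:tdp} and the calculus of \sec{sec:pdo}, the correction $B(t) \defeq \cm(t)\,\p_t\cp(t) \in \pdo{0}{-1-\delta}{\rr}$ is bounded on every $H^m(\Sigma;E)$, with operator norm $O(\bra t\ket^{-1-\delta})$, hence integrable over $\rr$.

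Second, since $H_0(t)$ is self-adjoint on the time-independent domain $H^1(\Sigma;E)$ and $t\mapsto H_0(t)$ is smooth with values in $\B{1}{0}$, the classical Kato theorem for hyperbolic evolution equations with self-adjoint generators of fixed domain yields a unique unitary Schr\"odinger propagator $U_0(t,s)$ preserving $H^1(\Sigma;E)$. I would then define $V(t,s)$ by iterating the Duhamel formula
\[
V(t,s) = U_0(t,s) - \int_s^t U_0(t,\tau)\,B(\tau)\,V(\tau,s)\,d\tau,
\]
whose Dyson series converges absolutely in $B(L^2(\Sigma;E))$ (and in $B(H^1(\Sigma;E))$) thanks to the integrability of $\norm{B(\tau)}$, giving existence together with the regularity properties required of a propagator. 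The energy identity
\[
\tfrac{d}{dt}\norm{V(t,s)u}_{L^2}^2 \;=\; -2\Re\bigl(B(t)V(t,s)u,\, V(t,s)u\bigr)_{L^2}
\]
combined with Gronwall's inequality then yields
\[
\norm{V(t,s)}_{B(L^2)} \;\leq\; \exp\!\bigg(\int_s^t\norm{B(\tau)}_{B(L^2)}\,d\tau\bigg) \;\leq\; \exp\!\bigg(\int_{\rr}\norm{B(\tau)}_{B(L^2)}\,d\tau\bigg) < +\infty,
\]
uniformly in $(t,s)\in\rr^2$.

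Third, I would verify directly that $U(t,s) = \cp(t)V(t,s)\cm(s)$ satisfies all items in the definition of a Schr\"odinger propagator: $U(t,t)=\one$ from $V(t,t)=\one$ and $\cp(t)\cm(t)=\one$; the composition law from the corresponding property of $V$ together with $\cm(t_0)\cp(t_0)=\one$; and the evolution equations $\p_t U = iH(t)U$, $\p_s U = -U\cdot iH(s)$ from the defining ansatz computation. Uniform boundedness of $U(t,s)$ in $B(L^2(\Sigma;E))$ then follows from the bound on $V(t,s)$ together with $\sup_{t\in\rr}\norm{\cp(t)} + \sup_{t\in\rr}\norm{\cm(t)} < +\infty$, which hold since $\cp(\cdot),\cm(\cdot)\in\pdo{0}{0}{\rr}$. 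Uniqueness of $U(t,s)$ reduces to uniqueness of $V(t,s)$, itself a consequence of a Gronwall argument applied to the difference of two candidate solutions.

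The main obstacle, and the reason this is not a direct application of Kato's theorem, is that $H(t)$ fails to be self-adjoint on a fixed Hilbert space: conjugating by $\cp(t)$ removes this obstruction but introduces the correction $i\cm(t)\p_t\cp(t)$ to the generator. The decay hypothesis $\p_t\cp(t)\in\pdo{0}{-1-\delta}{\rr}$ in \eqref{eq:tdp} is precisely what renders this correction integrable in time, which is what ultimately gives a \emph{uniform} bound on $\norm{U(t,s)}_{B(L^2)}$ over all of $\rr^2$ rather than only on compact subsets.
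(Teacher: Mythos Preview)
Your argument is correct and complete, but it differs substantially from the paper's proof. The paper does not conjugate by $\cp(t)$ or pass through a Dyson series; instead it applies directly the abstract Kato--Pazy theorem for hyperbolic evolution equations (\cite[\S5,~Thm.~4.8]{pazy}). The point is that the uniform resolvent bound \eqref{resolventbound}, $\norm{(H(t)-\lambda)^{-N}}\leq C\module{\Im\lambda}^{-N}$, together with the Hille--Yosida theorem, shows that each $iH(t)$ generates a bounded $C_0$-semigroup with a \emph{uniform} constant (i.e.\ $M=C$, $\omega=0$ in Pazy's notation). Since the domain $H^1(\Sigma;E)$ is $t$-independent and $t\mapsto H(t)$ is differentiable in $\B{1}{0}$, Pazy's theorem yields existence, uniqueness, and the uniform bound $\norm{U(t,s)}\leq C$ in one stroke.

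The trade-off: the paper's route is shorter and, notably, does \emph{not} use the decay hypothesis $\p_t\cp(t)\in\pdo{0}{-1-\delta}{\rr}$ at all for this proposition --- uniform boundedness of $\cp(t),\cm(t)$ alone suffices for \eqref{resolventbound}. Your argument, by contrast, relies essentially on the integrability of $\norm{B(t)}$ (hence on $\delta>0$) to close the Gronwall estimate globally in $t$; without it your bound would degenerate to an exponential in $\module{t-s}$. On the other hand, your construction is more explicit and gives the factorisation $U(t,s)=\cp(t)V(t,s)\cm(s)$ with $V$ a bounded perturbation of a unitary propagator, which could be useful if one wanted finer control on $U(t,s)$ later on.
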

\proof  The family  $t\mapsto H(t)$ is bounded in $B(H^1(\Sigma;E), L^2(\Sigma;E))$ and differentiable.  Furthermore, for each $t\in\rr$, $H(t)$ is a closed operator in the sense of the Hilbert space $L^2(\Sigma;E)$, with $t$-independent domain $H^1(\Sigma;E)$. It is also the generator of a strongly continuous bounded semi-group. In consequence, we can apply  \cite[\S 5,~Thm.~4.8]{pazy}  (with $M=1$ and $\omega=0$ therein) to conclude the assertion.  \qeds

Note that the Schr\"odinger propagator of $H_\pm$ is simply $e^{i(t-s)H_\pm}$, and the ``free'' evolution $e^{i(t-s)H_\pm}$ is the obvious comparison dynamics for the scattering theory at $t\to\pm\infty$. However, for our purpose it will often be better to consider the family $\{e^{itH(t)}\}_{t\in\rr}$, which solves
\beq\label{eq:diffht}
\p_t e^{itH(t)}= i (H(t)+t \p_t  H(t)) e^{itH(t)}.
\eeq

Because  $e^{i(t-s)H_\pm}$ commutes with $(1+H^2_\pm)^\12$, it is straightforward to show that the family $\{e^{i(t-s)H_\pm}\}_{t,s\in\rr}$ is bounded in $B(H^m(\Sigma;E))$ for all $m\in\rr$.  Thanks to our decay assumptions and Lemma \ref{lemlam}  we also have the following result.

\begin{lemma}\label{lembd} For all $m\in \rr$,  $\{ U(t,s)\}_{t,s\in\rr}$ and $\{e^{itH(t)}\}_{t\in\rr}$ are uniformly bounded  in $B(H^m(\Sigma;E))$.
 \end{lemma}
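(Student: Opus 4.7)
My strategy is to conjugate the propagator into a form with an (almost) self-adjoint generator, and then close an energy estimate using the fact that all the error terms introduced by $\partial_t$ falling on coefficients are integrable in $t$.

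\textbf{Step 1 (reduction to $H_0$).} Since $T(\cdot),T^{-1}(\cdot)\in\pdo{0}{0}{\rr}$, they act uniformly boundedly on every $H^m(\Sigma;E)$, so it suffices to bound $V(t,s):=T(t)^{-1}U(t,s)T(s)$ on $H^m$. Differentiating in $t$, using $H(t)=T(t)H_0(t)T(t)^{-1}$ from \eqref{condh} and $\partial_tT(t)^{-1}=-T(t)^{-1}(\partial_tT(t))T(t)^{-1}$, gives
\begin{equation*}
\partial_t V(t,s)=\bigl(iH_0(t)+B(t)\bigr)V(t,s),\quad V(s,s)=\one,
\end{equation*}
with $B(t):=-T(t)^{-1}\partial_tT(t)\in\pdo{0}{-1-\delta}{\rr}$ by \eqref{eq:tdp}.

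\textbf{Step 2 (Sobolev conjugation and energy estimate).} Set $\Lambda_0(t):=T(t)^{-1}\Lambda(t)T(t)=(1+H_0^2(t))^{1/2}$. From Lemma \ref{lemlam} applied to $\Lambda(t)$, together with $T^{\pm 1}(\cdot)\in\pdo{0}{0}{\rr}$ and $\partial_t T(\cdot)\in\pdo{0}{-1-\delta}{\rr}$, one reads off that $\Lambda_0^s(\cdot)\in\pdo{s}{0}{\rr}$, $\partial_t\Lambda_0^s(\cdot)\in\pdo{s}{-1-\delta}{\rr}$, and the uniform-in-$t$ equivalence $\|\Lambda_0^m(t)v\|_{L^2}\asymp\|v\|_{H^m}$ (since $T^{\pm 1}(t)$ are uniformly bounded on $H^m$). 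Define $w(t):=\Lambda_0^m(t)V(t,s)\Lambda_0^{-m}(s)u$. Because $\Lambda_0(t)$ commutes with $H_0(t)$,
\begin{equation*}
\partial_t w = iH_0(t)w + R(t)w,\qquad R(t):=\bigl(\partial_t\Lambda_0^m(t)\bigr)\Lambda_0^{-m}(t)+\Lambda_0^m(t)B(t)\Lambda_0^{-m}(t),
\end{equation*}
and the symbol calculus gives $R(t)\in\pdo{0}{-1-\delta}{\rr}$, hence $\|R(t)\|_{B(L^2)}\le C\langle t\rangle^{-1-\delta}$, which is integrable on $\rr$. Self-adjointness of $H_0(t)$ yields $\Re\langle iH_0(t)w,w\rangle_{L^2}=0$, so $\frac{d}{dt}\|w\|_{L^2}^2\le 2\|R(t)\|\,\|w\|_{L^2}^2$, and Gronwall's inequality gives $\|w(t)\|_{L^2}\le C\|w(s)\|_{L^2}$ uniformly in $t,s$. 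Unwinding the two conjugations using the norm equivalences above bounds $U(t,s)$ uniformly on $H^m$.

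\textbf{Step 3 ($e^{itH(t)}$ and main obstacle).} For the second family, $H(t)=T(t)H_0(t)T(t)^{-1}$ together with the functional calculus of \sec{ss:funcalcul} gives $e^{itH(t)}=T(t)e^{itH_0(t)}T(t)^{-1}$ (this may be checked directly on the Helffer--Sj\"ostrand formula). Since $e^{itH_0(t)}$ commutes with $\Lambda_0^m(t)$ and is $L^2$-unitary, it is an isometry of $\Lambda_0^{-m}(t)L^2$, which by Step 2 is $H^m(\Sigma;E)$ with uniform constants; conjugating by $T(t)$ then gives the uniform $H^m$-bound on $e^{itH(t)}$. The only real technical point is the verification in Step 2 that the similarity $\Lambda_0(t)=T(t)^{-1}\Lambda(t)T(t)$ transfers Lemma \ref{lemlam} from $\Lambda$ to $\Lambda_0$ while respecting the decay class $\pdo{\ast}{-1-\delta}{\rr}$; this is routine time-dependent symbol calculus once one observes that $\partial_t T$ improves the time-decay order by one without increasing the spatial order.
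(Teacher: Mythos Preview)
Your proof is correct and follows essentially the same strategy as the paper's: conjugate by the $\Lambda$-weight, use that it commutes with the generator, and close a Gronwall estimate on the $L^2$ norm with an integrable $\langle t\rangle^{-1-\delta}$ remainder coming from Lemma~\ref{lemlam}. The only cosmetic difference is that you first conjugate by $T(t)$ to make the self-adjointness of $H_0$ manifest and run the energy estimate directly, whereas the paper works with $\Lambda(t)$ and the already-established uniform $L^2$ bound on $U(t,s)$ from Proposition~\ref{proprog}; both routes rest on the same ingredients.
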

\proof  In view of Lemma \ref{lemlam} and Proposition \ref{proprog}, we can repeat the second paragraph of \cite[Proof of Prop.~5.6]{inout} verbatim  (with $\Lambda(t)$ instead of $\epsilon(t)$,  and with $H(t)$ and $U(t,s)$ instead of $H^{\rm ad}(t)$ and $U^{\rm ad}(t,s)$). The same proof gives also the uniform boundedness of $\{e^{itH(t)}\}_{t\in\rr}$. \qed

\subsection{\texorpdfstring{Functions of $H(t)$ as pseudodifferential operators}{Functions of H(t) as pseudodifferential operators}} Using the Beals type criterion formulated in Proposition \ref{beals}, we show that suitable functions of $H(t)$ are pseudo-differential operators  in the sense of our time-dependent classes.

\begin{lemma}\label{ptH0t}
For all $\chi\in S^0(\rr)$, $\chi(H(t))\in \pdo{0}{0}{\rr}$. Furthermore, $\p_t \chi(H(t)) \in \pdo{0}{-1-\delta}{\rr}$.
\end{lemma}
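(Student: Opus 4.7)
The plan is to combine the Helffer--Sjöstrand representation \eqref{eq:helffersjostrand}--\eqref{betterhelffer} with the Beals-type criterion of Proposition~\ref{beals}. Since $m=0$, $\ell=0$, and every test operator $L_j\in\Psi^1(E)$ is principally scalar of order $1$ (so $\deg L_j=1$ and $d(k)=0$), the first claim reduces to establishing
$$
\sup_{t\in\rr}\bra t\ket^{\gamma}\,\|\ad_{L_1}\cdots\ad_{L_k}\,\p_t^\gamma\chi(H(t))\|_{B(H^s, H^s)}<\infty
$$
for every $k,\gamma\in\nn$, every $s\in\rr$, and every such $L_1,\dots,L_k$. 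The second claim is Beals applied with $\ell=-1-\delta$: it requires the analogous bound for $\p_t\chi(H(t))$ with the weight $\bra t\ket^{1+\delta+\gamma}$, and will be obtained by promoting the $\gamma\geq 1$ case of the first bound from $\bra t\ket^{\gamma}$ to $\bra t\ket^{\gamma+\delta}$.

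The technical core is to propagate commutators and time derivatives through the resolvent by the identities
$$
[L,(z-H(t))^{-1}]=(z-H(t))^{-1}[L,H(t)](z-H(t))^{-1},\quad \p_t(z-H(t))^{-1}=(z-H(t))^{-1}(\p_tH(t))(z-H(t))^{-1},
$$
applied iteratively. This rewrites $\ad_{L_1}\cdots\ad_{L_k}\p_t^\gamma\chi(H(t))$ as a finite sum of integrals of the form
$$
\int_\cc\frac{\p\tilde\chi}{\p\bar z}(z)\,(z-H(t))^{-1}B_1(t)(z-H(t))^{-1}\cdots B_p(t)(z-H(t))^{-1}\,d\bar z\wedge dz,
$$
with $p\geq k+\gamma$, where each $B_j(t)$ is an iterated commutator of some of the $L_i$'s with either $H(t)$ or some $\p_t^{r_j}H(t)$, subject to $\sum_j r_j=\gamma$. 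By Hypothesis~\ref{hyptd}\eqref{td1} one has $\p_t^{r}H(t)\in\pdo{1}{-r-\delta}{\rr}$ for $r\geq 1$, and nested commutators of these with principally scalar order-$1$ operators preserve both the symbolic order and the time-decay index. Hence each $B_j(t)$ is uniformly bounded in $B(H^s,H^{s-1})$, and contributes an additional factor $\bra t\ket^{-r_j-\delta}$ whenever $r_j\geq 1$. For $\gamma\geq 1$ the overall time-decay is therefore at worst $\bra t\ket^{-\gamma-\delta}$, which comfortably dominates both weights needed above.

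To turn this into a norm bound I would invoke the resolvent estimate \eqref{resolventbound}, giving $|\Im z|^{-(p+1)}$, together with the almost-analytic bound $|\p_{\bar z}\tilde\chi(z)|\leq C_N\bra\Re z\ket^{-1-N}|\Im z|^N$, choosing $N$ sufficiently larger than $p$. This yields absolute convergence in $B(H^s,H^s)$, uniform in $t$, with the time decay claimed above. The main subtlety I anticipate is that for $\chi\in S^0(\rr)$ the bare Helffer--Sjöstrand formula \eqref{eq:helffersjostrand} does not converge in operator norm and has to be understood in the regularized sense of \eqref{betterhelffer}; however, as soon as at least one commutator or time derivative has been applied, the extra resolvent factor makes the integrand absolutely integrable, and the identification between the regularized object and the formal manipulation is then routine by testing on the dense domain of $\bra H(t)\ket^\rho$ for $\rho$ sufficiently large.
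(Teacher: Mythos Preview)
Your approach---Helffer--Sj\"ostrand representation combined with the Beals criterion, unwound via the iterated resolvent identities for commutators and time derivatives---is exactly the paper's; the only organisational difference is that the paper first packages the commutator estimates as ``the $\pdo{0}{0}{\rr}$-seminorms of $(H(t)-z)^{-1}$ are $O(|\Im z|^{-1})$'' and then integrates, whereas you integrate first and expand inside.

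Two remarks. First, the Beals criterion of Proposition~\ref{beals} is stated for \emph{all} principally scalar $L_j\in\Psi^1(E)$, not only those of exact order~$1$; you should also allow $\deg L_j<1$ (hence $d(k)>0$), which goes through by the same mechanism since each $B_j$ then has correspondingly lower order. Second, the convergence claim ``$|\Im z|^{-(p+1)}$ yields absolute convergence in $B(H^s,H^s)$'' is too quick as stated: the insertions $B_j$ are of order~$1$, so chaining $(z-H)^{-1}B_1\cdots B_p(z-H)^{-1}$ with $H^r\to H^r$ resolvent bounds lands you in $H^{s-p}$, not $H^s$; restoring the regularity via $\|(z-H)^{-1}\|_{B(H^r,H^{r+1})}\lesssim\langle z\rangle|\Im z|^{-1}$ introduces powers of $\langle\Re z\rangle$ which, integrated against $\langle\Re z\rangle^{-1-N}|\Im z|^N$, leave a borderline-divergent $\int\langle\Re z\rangle^{-1}$ regardless of $N$. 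The paper's Step~2 is equally brief on this point. The clean fix is to track the joint $(\xi,z)$-behaviour of the resolvent parametrix $(z-h(x,\xi))^{-1}$ at the symbol level rather than rely on raw operator norms; both your sketch and the paper implicitly lean on this.
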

\proof \step{1} In the first step we analyse the resolvent $(H(t)-z)^{-1}$ for $z\in \cc\setminus \rr$.

Let $\cR^\infty(z)$ be the algebra finitely generated by $(H(t)-z)^{-1}$ and by ($z$-independent) elements of $\pdo{m}{0}{\rr}$ for all $m\in\rr$ under operator product and sum. Let us also denote
$$
\cR^m(z) \defeq \cR^\infty(z) \cap \big( \textstyle\bigcup_{s\in\rr} \B{s}{s-m}\big).
$$
This defines a filtration of $\cR^\infty(z)$ satisfying $\cR^{m_1}(z)\cR^{m_2}(z)\subset \cR^{m_1+m_2}(z)$. For all {\Red principally} scalar $L_i\in \Psi^1(M;E)$, we have
$$
\bea
\ad_{L_i} \pdo{m}{0}{\rr} \subset \pdo{m-(1-\deg L_i)}{0}{\rr} \subset \cR^{m-(1-\deg L_i)}(z)
\eea
$$
and
$$
\bea
\ad_{L_i} (H-z)^{-1} = (H-z)^{-1} [ H, L_i ](H-z)^{-1} & \in (H-z)^{-1} \cR^{-(1-\deg L_i)}(z)\\
& \phantom{\in\,}\subset \cR^{-1 -(1-\deg L_i)}(z).
\eea
$$
Since $\ad_{L_i}$ acts as a derivation,  in greater generality we have
$$
 \ad_{L_i} \cR^m(z)\subset    \cR^{m-(1-\deg L_i)}(z),
$$
and
$$
\bea
\ad_{L_i} (H-z)^{-1} \cR^m(z) &\subset   \big(\ad_{L_i} (H-z)^{-1}\big) \cR^m(z) +(H-z)^{-1} \ad_{L_i}  \cR^m(z)\\
& \subset (H-z)^{-1}  \cR^{m-(1-\deg L_i)}(z) + (H-z)^{-1}  \cR^{m-(1-\deg L_i)}(z)\\
&  \subset  (H-z)^{-1}  \cR^{m-(1-\deg L_i)}(z).
\eea
$$
By iterating this $k\in \nn$ times,  for all {\Red principally} scalar $L_1,\dots,L_k\in \Psi^1(M;E)$ we obtain:
\beq\label{itera}
\ad_{L_1}\cdots \ad_{L_k} (H-z)^{-1}  \cR^m(z) \subset (H-z)^{-1} \cR^{m-d(k)} (z)
\eeq
where $d(k)=\sum_{j=0}^k (1-\deg L_j)$.

On the other hand, as a consequence of  the resolvent identity
$$
(H(t+h)-z)^{-1} - (H(t)-z)^{-1}= (H(t)-z)^{-1}( H(t) - H(t+h) )  (H(t+h)-z)^{-1}
$$
we have
\beq\label{eq:llk}
\p_t ( H(t) - z )^{-1} =   - ( H(t) - z )^{-1}( \p_t H(t) )  ( H(t) - z )^{-1}.
\eeq
Using \eqref{eq:llk} and $\p_t H(t)\in \pdo{1}{-1}{\rr}$ repeatedly, for all $\gamma\in\nn$ we get
$$
\bra t \ket^{\gamma} \p_t^\gamma  ( H(t) - z )^{-1} \in   ( H(t) - z)^{-1}  \cR^0(z).
$$
In consequence, we can apply \eqref{itera} with $m=0$, which yields
$$
 \bra t \ket^{\gamma} \ad_{L_1}\cdots \ad_{L_k}  \p_t^\gamma (H-z)^{-1}   \in (H-z)^{-1} \cR^{-d(k)}(z)\subset \cR^{-1-d(k)}(z).
$$
Considering the definition of $\cR^{-1-d(k)}(z)$ and  boundedness properties of pseudo-differential operators, this means that
\beq\label{eqbeals1}
\sup_{t\in \rr} \,\bra t \ket^\gamma \norm{\ad_{L_1}\cdots \ad_{L_k}  \p_t^\gamma (H(t)-z)^{-1}}_{B(H^s,H^{s+1+d(k)})} < +\infty.
\eeq
for all $s\in\rr$.
Alternatively, using the uniform estimate
$$
\norm{(H(t)-z)^{-1}}_{B(H^{s},H^{s})}  \lesssim \module{\Im z}^{-1}
$$
one time, we obtain in a similar way
\beq\label{eqbeals2}
\sup_{t\in \rr} \,\bra t \ket^\gamma \norm{\ad_{L_1}\cdots \ad_{L_k}  \p_t^\gamma (H(t)-z)^{-1}}_{B(H^s,H^{s+d(k)})} \lesssim \module{\Im z}^{-1}.
\eeq
By the Beals-type commutator criterion stated in Proposition \ref{beals}, we conclude from \eqref{eqbeals1}  that $(H-z)^{-1}\in\pdo{-1}{0}{\rr}$ and from   \eqref{eqbeals2} that the seminorms of $(H-z)^{-1}$ in $\pdo{0}{0}{\rr}$ are $O(\module{\Im z})^{-1}$.

\step{2} In the second step, using the Helffer--Sjöstrand formula we deduce $\chi(H(t))\in \pdo{0}{0}{\rr}$ from what we already know on the resolvent.  Namely we apply \eqref{betterhelffer} in the form
\beq\label{HSfo}
\chi\big( H(t)\big)=\slim_{R\to +\infty} \frac{1}{2\pi i}\int_{\mathbb{C}}\p_{\bar{z}}\widetilde{(\chi \psi_R)}(z)(z-H(t))^{-1}  d\bar{z} \wedge dz,
\eeq
where $\psi_R(t)\defeq\psi(t/R)$, $\psi\in C^\infty_\c(\rr,[0,1])$ with $\psi=1$ near 0, the limit is taken in the strong operator topology and $\widetilde{(\chi\psi_R)}$ is an almost analytic extension of $\chi\psi_R$ of order $N\geq 2$, in particular
\beq\label{Ngeq2}
\widetilde{(\chi\psi_R)}|_{\rr}=\chi\psi_R,\qquad |\p_{\bar{z}}\widetilde{(\chi\psi_R)}(z)|\leq c\bra \Re z\ket ^{-1-N}\left|\Im z\right|^N.
\eeq
Above, $c$ depends only on the semi-norms
$$
\sup_{t\in \rr}\,\langle t\rangle^{k}|(\chi\psi_R)^{(k)}(t)|,\qquad k\in\nn,
$$
see \cite[\S2]{sylvain} for more details. We analyse the dependence on $R$:
\begin{align*}
\sup_{t\in \rr}\,\langle t\rangle^{k}|(\chi \psi_R)^{(k)}(t)| & \lesssim  \sup_{t\in\rr}\sum_{m=0}^k \bra t \ket^{k} \chi^{(m)}(t) |(\psi_R)^{(k-m)}(t)| \\
& \lesssim \sum_{m=0}^k \sup_{t\in\rr} \,\bra t \ket^m \chi^{(m)}(t)\cdot \sup_{t\in\rr}\,\bra t \ket^{k-m} R^{m-k}|\psi^{(k-m)}(t/R)| \\
&= O(1),
\end{align*}
where we used the fact that $|\psi^{(k)}|\in\cinf_\c(\rr)$ and $\chi\in S^0(\rr)$ in the last step.  Therefore, we can use \eqref{HSfo} and the $O(\module{\Im z})^{-1}$  bound on seminorms of $(H-z)^{-1}$ in $\pdo{0}{0}{\rr}$  proved in \emph{Step 1}  to conclude  that $\chi\big(H(t)\big)\in\pdo{0}{0}{\rr}$.

\step{3} To show that $\p_t \chi(H(t)) \in \pdo{0}{-1-\delta}{\rr}$ we  repeat \emph{Step 1} and \emph{Step 2}. The extra decay is obtained by taking into account  that $\p_t H(t) \in \pdo{1}{-1-\delta}{\rr}$ (which follows from    Hypothesis \ref{hyptd}) when using \eqref{eqbeals1} .
\qeds

More generally, for $s\in\rr$, any $f\in S^s(\rr)$ is of the  form $f(x)=\bra x\ket^s \chi(x)$ for some $\chi\in S^0(\rr)$. Since  by Proposition \ref{propseeley}, $\bra H(t)\ket^s \in \pdo{s}{0}{\rr}$, we obtain the following immediate corollary.

\begin{corollary}\label{corcor} For all $s\in\rr$, if $f\in S^s(\rr)$ then $f\big( H(t) \big)\in \pdo{s}{0}{\rr}$.
\end{corollary}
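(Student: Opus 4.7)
The plan is to reduce the statement to the case $s=0$ already handled by Lemma~\ref{ptH0t}, by factoring $f$ through $\bra\cdot\ket^s$ and then applying the composition property of the $\pdo{\cdot}{\cdot}{\rr}$ calculus.

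First, I would set $\chi(x)\defeq f(x)\bra x\ket^{-s}$ and check that $\chi\in S^0(\rr)$. This is a routine Leibniz computation: every derivative of $\bra x\ket^{-s}$ gains a factor $\bra x\ket^{-1}$, which combined with the bound $|\p_x^k f(x)|\lesssim \bra x\ket^{s-k}$ yields $|\p_x^k\chi(x)|\lesssim \bra x\ket^{-k}$ for all $k\in\nn$, i.e., $\chi\in S^0(\rr)$.

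Next, using the functional calculus for $H(t)$ developed in \sec{ss:funcalcul} (equivalently, the similarity $H(t)=T(t)H_0(t)T(t)^{-1}$, which reduces the functional calculus of $H(t)$ to that of the self-adjoint operator $H_0(t)$), the pointwise factorization $f=\bra\cdot\ket^s\,\chi$ lifts to the operator identity
\[
f\big(H(t)\big)=\bra H(t)\ket^s\,\chi\big(H(t)\big)=\Lambda(t)^s\,\chi\big(H(t)\big).
\]
By Lemma~\ref{lemlam} together with Theorem~\ref{thmseeley} (applied to the principally scalar elliptic family $\Lambda(t)\in\pdo{1}{0}{\rr}$), one has $\Lambda(t)^s\in\pdo{s}{0}{\rr}$ for every $s\in\rr$; this is essentially the statement quoted by the authors as a consequence of Proposition~\ref{propseeley}. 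By Lemma~\ref{ptH0t}, $\chi(H(t))\in\pdo{0}{0}{\rr}$. The composition rule $\pdo{s}{0}{\rr}\cdot\pdo{0}{0}{\rr}\subset\pdo{s}{0}{\rr}$ then gives $f(H(t))\in\pdo{s}{0}{\rr}$.

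No serious obstacle arises; the only minor point to justify is the operator-level factorization $f(H(t))=\Lambda(t)^s\chi(H(t))$, which is immediate from the similarity $H(t)=T(t)H_0(t)T(t)^{-1}$ and the multiplicativity of the self-adjoint functional calculus for $H_0(t)$. The corollary is therefore an immediate consequence of Lemmas~\ref{lemlam} and~\ref{ptH0t} together with Theorem~\ref{thmseeley}.
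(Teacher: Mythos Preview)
Your proposal is correct and follows essentially the same route as the paper: factor $f=\bra\cdot\ket^s\chi$ with $\chi\in S^0(\rr)$, invoke $\Lambda(t)^s=\bra H(t)\ket^s\in\pdo{s}{0}{\rr}$ (established in Lemma~\ref{lemlam} via Theorem~\ref{thmseeley}), apply Lemma~\ref{ptH0t} to $\chi(H(t))$, and compose. Your citation of Theorem~\ref{thmseeley} for $\Lambda(t)^s\in\pdo{s}{0}{\rr}$ is in fact the correct one; the paper's reference to Proposition~\ref{propseeley} at that point appears to be a slip.
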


This implies in particular that if $f\in S^s(\rr)$ with $s< 0$ then $f\big( H(t) \big)$ is compact as an operator in $\B{m}{m}$ for each $m\in\rr$.

\subsection{Large-time evolution of spectral projections}  In this part we  prove  the following key result on the large-time Heisenberg evolution of spectral projections.

\begin{proposition}\label{lemkey} Let $\chi\in S^0(\rr,[0,1])$ be such that $\chi=\one_{\clopen{0,+\infty}}$ outside of a bounded neighborhood of $0$. Then there exists   $R(t)\in \pdo{-1}{0}{\rr}$ such that for all $t\in\rr$,
\beq\label{eqitislc}
 U(t,0) \big(  \chi (H(0))   + R(0)  \big) U(0,t) =\chi(H(t)) + R(t),
\eeq
and furthermore,
\beq\label{slims}
\slim_{t\to \pm \infty} U(0,t)R(t)U(t,0)\in \Psi^{-\infty}(E).
\eeq
\end{proposition}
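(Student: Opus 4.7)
The plan is to convert (\ref{eqitislc}) into an operator-valued ODE and solve it by combining a symbolic asymptotic expansion with a Duhamel-type correction. Differentiating the left-hand side of (\ref{eqitislc}) in $t$ and using that $H(t)$ commutes with $\chi(H(t))$, one sees that (\ref{eqitislc}) is equivalent to
$$ \p_t R(t) - i[H(t), R(t)] = -\p_t \chi(H(t)). $$
By Lemma~\ref{ptH0t} the right-hand side lies in $\pdo{0}{-1-\delta}{\rr}$, and the task reduces to constructing a solution $R(t) \in \pdo{-1}{0}{\rr}$ of this ODE satisfying the required smoothing behavior at infinity.

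The main step is to build $R^{\rm app}(t) \in \pdo{-1}{-1-\delta}{\rr}$ such that
$$ E(t) \defeq \p_t R^{\rm app}(t) - i[H(t), R^{\rm app}(t)] + \p_t \chi(H(t)) \in \pdo{-\infty}{-1-\delta}{\rr}, $$
via a symbolic iteration exploiting the Dirac-type structure of $H(t)$. Namely, the principal symbol $h(t,x,\xi)$ has eigenvalues $\pm\module{\xi}_{g(t)}$ away from $\xi=0$, with spectral projections $\pi_\pm$, and the principal symbol of $\chi(H(t))$ coincides with $\pi_+$ outside a bounded neighborhood of $\xi=0$. The commutator action sends $\pi_+ a\pi_- + \pi_- b\pi_+$ to $2\module{\xi}_{g(t)}(\pi_+ a\pi_- - \pi_- b\pi_+)$, so $\ad_h$ is invertible on the off-diagonal subspace of symbols with inverse of order $-1$ in $\xi$. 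Crucially, $\p_t\pi_+$ is automatically off-diagonal: differentiating $\pi_+^2=\pi_+$ yields $\pi_+(\p_t\pi_+)\pi_+ = \pi_-(\p_t\pi_+)\pi_- = 0$. One can therefore solve $i[h,r^{(1)}]=\p_t\pi_+$ by $r^{(1)} \defeq -i\ad_h^{-1}(\p_t\pi_+) \in S^{-1-\delta}(\rr,S^{-1})$, and iteratively construct homogeneous contributions $r^{(j)}$ of order $-j$ by inverting $\ad_h$ on the off-diagonal parts of the lower-order errors arising from the Moyal expansion of $[H,R^{\rm app}]$ and from $\p_t r^{(j-1)}$, while setting the free diagonal parts to vanish. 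Borel summation and quantization yield $R^{\rm app}$ with the stated properties.

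To obtain an exact solution I correct $R^{\rm app}$ via Duhamel. Set
$$ K(t) \defeq -\int_0^t U(t,s) E(s) U(s,t) \, ds, \quad R(t) \defeq R^{\rm app}(t) + K(t), $$
so that $R$ satisfies the ODE exactly. By Lemma~\ref{lembd}, $U(t,s)$ is uniformly bounded in every $H^m(\Sigma;E)$, and $E(s)\in\pdo{-\infty}{-1-\delta}{\rr}$ gives $\|U(t,s)E(s)U(s,t)\|_{B(H^m,H^{m'})}\leq C_{m,m'}\bra s\ket^{-1-\delta}$ for all $m,m'$. Hence the integral defining $K(t)$ converges in every Sobolev operator norm uniformly in $t$, so $K(t)\in\Psi^{-\infty}(E)$ is uniformly bounded in $t$ and a fortiori $R(t)\in\pdo{-1}{0}{\rr}$. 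The identity (\ref{eqitislc}) is then equivalent to the ODE.

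For the smoothing limits at infinity, decompose $U(0,t)R(t)U(t,0) = U(0,t)R^{\rm app}(t)U(t,0) + U(0,t)K(t)U(t,0)$. The first term tends to zero in operator norm: $R^{\rm app}(t)\in\pdo{-1}{-1-\delta}{\rr}$ gives $\|R^{\rm app}(t)\|_{L^2\to L^2}=O(\bra t\ket^{-1-\delta})$ and $U$ is uniformly bounded. For the second, the cocycle property $U(0,t)U(t,s)=U(0,s)$ rewrites
$$ U(0,t)K(t)U(t,0) = -\int_0^t U(0,s) E(s) U(s,0) \, ds, $$
which by the same Sobolev-integrable bound converges in $B(H^m,H^{m'})$ for every $m,m'$ as $t\to\pm\infty$, yielding a smoothing limit $L_\pm\in\Psi^{-\infty}(E)$. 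The hardest part of the argument will be the symbolic iteration: one must verify that the Dirac-type algebraic structure propagates through all orders of the Moyal expansion, ensuring both that the $t$-decay $-1-\delta$ is preserved and that the diagonal components of each $r^{(j)}$ can consistently be chosen to vanish without obstructing solvability at the next step.
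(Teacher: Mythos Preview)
Your overall architecture matches the paper's: reduce (\ref{eqitislc}) to the Heisenberg-type ODE, solve it approximately by a symbolic iteration gaining one order in $\xi$ at each step, and then correct via Duhamel. The Duhamel part and the smoothing-limit part are fine and agree with the paper's argument essentially verbatim.

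The gap is in the symbolic iteration, precisely at the point you flag as ``the hardest part''. Your scheme inverts $\ad_h$ only on the off-diagonal component of the error and sets the diagonal part of each $r^{(j)}$ to zero. But the diagonal component of the error does \emph{not} vanish after the first step: already $\pi_+(\p_t r^{(1)})\pi_+ = -(\p_t\pi_+)r^{(1)}\pi_+ - \pi_+ r^{(1)}(\p_t\pi_+)$ is generically nonzero, and the subprincipal terms of the Moyal expansion of $[H,\Op(r^{(1)})]$ contribute diagonal pieces as well. Since $\ad_h$ annihilates diagonal symbols, these errors cannot be removed by your mechanism, and the residual $E(t)$ will \emph{not} land in $\pdo{-\infty}{-1-\delta}{\rr}$: a diagonal error persists at every finite order. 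So the assertion that ``the diagonal components of each $r^{(j)}$ can consistently be chosen to vanish without obstructing solvability'' is false as stated.

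The paper's fix (Lemma~\ref{lemego2}) is to add at each step a \emph{second} correction $S_{m-k-1}$, diagonal with respect to $P_\pm(t)$, which removes the diagonal error by integrating the transport ODE $\p_t S - i[H,S] = -(\hbox{diagonal part})$ at the principal-symbol level; since $[h,\cdot]$ vanishes on diagonal symbols this is just an antiderivative in $t$. Only the combination $Z_{m-k-1}+S_{m-k-1}$ brings the error down by one full order. Note also that the paper works at the operator level with $P_\pm(t)=\chi(H(t))$ rather than with the symbol projections $\pi_\pm$: this makes the ``off-diagonal'' condition $P_\pm E P_\pm\in\pdo{-\infty}{\ell}{\rr}$ a statement modulo smoothing operators rather than modulo lower-order symbols, which is what allows Lemma~\ref{lemego1} to solve $[H,Z]=A$ modulo $\Psi^{-\infty}$ in one shot. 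If you insist on a purely symbolic argument, you must add the diagonal integration step and track the $t$-decay through it.
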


Note that in contrast to the scalar case, $U(t,0)R(0)U(0,t)$ is not necessarily pseudo-differential if $R(0)$ is pseudo-differential, so it cannot be simply absorbed into the r.h.s.~of \eqref{eqitislc} as  in the standard formulation of Egorov's theorem (see e.g.~\cite[\S11]{Zworski2012}).

Before giving the proof we need a couple of auxiliary results. For the sake of brevity we denote
$$
P_+(t)=\chi(H(t)), \quad P_-(t)=\one - \chi(H(t)).
$$

\begin{lemma}\label{lemego1} Let $m,\ell\in \rr$. Suppose $A(t)\in\pdo{m}{\ell}{\rr}$ satisfies
\beq\label{pap}
P_+(t) A(t) P_+(t)\in  \pdo{-\infty}{\ell}{\rr}, \quad P_-(t) A(t) P_-(t)\in  \pdo{-\infty}{\ell}{\rr}.
\eeq
Then there exists $Z(t)\in \pdo{m-1}{\ell}{\rr}$ such that
\beq\label{hra}
[H,Z](t)= A(t) \mod  \pdo{-\infty}{\ell}{\rr}.
\eeq
\end{lemma}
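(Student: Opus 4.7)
The plan is to solve the commutator equation $[H(t),Z(t)]=A(t)$ modulo $\pdo{-\infty}{\ell}{\rr}$ by working within the off-diagonal subspace of pseudo-differential operators adapted to the spectral decomposition induced by $P_+(t):=\chi(H(t))$ and $P_-(t):=\one-\chi(H(t))$. Two observations are crucial: first, $[H(t),P_\pm(t)]=0$ since both are functions of $H(t)$; second, $H(t)$ being of Dirac type, its principal symbol $h$ satisfies $h^2=\lambda^2\one$ with $\lambda=(h^2)^{1/2}$ a positive scalar elliptic symbol of order $1$, and so $[H,\cdot]$ becomes elliptic of order $1$ on off-diagonal operators.

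Decompose $A=A_\mathrm{off}+A_\mathrm{diag}$ with $A_\mathrm{diag}:=P_+AP_++P_-AP_-$ and $A_\mathrm{off}:=P_+AP_-+P_-AP_+$; hypothesis \eqref{pap} gives $A_\mathrm{diag}\in\pdo{-\infty}{\ell}{\rr}$, so it suffices to solve $[H,Z]\equiv A_\mathrm{off}$ modulo $\pdo{-\infty}{\ell}{\rr}$. Writing $\pi_\pm=\tfrac12(\one\pm h/\lambda)$ for the principal symbols of $P_\pm(t)$, a direct matrix computation using $h\pi_\pm=\pm\lambda\pi_\pm$ gives
\[
[h,\pi_+ w\pi_-]=2\lambda\,\pi_+w\pi_-,\qquad [h,\pi_- w\pi_+]=-2\lambda\,\pi_-w\pi_+
\]
for any matrix $w$. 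In particular, $T:W\mapsto[H,W]$ preserves each of the subspaces of operators of the form $P_+(t)\cdot P_-(t)$ and $P_-(t)\cdot P_+(t)$, and on these subspaces it is elliptic with principal symbol $\pm 2\lambda$.

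I would then construct a parametrix for $T$ on each block by the standard iterative procedure. On the $+-$ block, set $W_0\in\pdo{m-1}{\ell}{\rr}$ to be a quantization of $(2\lambda)^{-1}$ times the principal symbol of $A_{+-}:=P_+AP_-$, so that $R_1:=A_{+-}-T(W_0)\in\pdo{m-1}{\ell}{\rr}$ and is still of the form $P_+\cdot P_-$. Iterate: given $R_k\in\pdo{m-k}{\ell}{\rr}$ of $+-$ type, let $W_k$ be a quantization of $(2\lambda)^{-1}$ times the principal symbol of $R_k$, producing $W_k\in\pdo{m-1-k}{\ell}{\rr}$ and $R_{k+1}:=R_k-T(W_k)\in\pdo{m-k-1}{\ell}{\rr}$ again of $+-$ type. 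Take a Borel-type asymptotic sum $W\sim\sum_k W_k\in\pdo{m-1}{\ell}{\rr}$ uniformly in $t$ in the Fr\'echet seminorms of Definition~\ref{deftd}, which yields $T(W)\equiv A_{+-}$ modulo $\pdo{-\infty}{\ell}{\rr}$; the decay index $\ell$ is preserved because all operations used — multiplication by $(2\lambda)^{-1}\in\sym{-1}{0}{\rr}$ and by $\pi_\pm\in\sym{0}{0}{\rr}$, commutation with $H\in\pdo{1}{0}{\rr}$ — act trivially on it. A symmetric construction on the $-+$ block yields $W'$ with $T(W')\equiv A_{-+}$ modulo smoothing, and then $Z:=W+W'\in\pdo{m-1}{\ell}{\rr}$ satisfies $[H,Z]\equiv A_\mathrm{off}\equiv A$ modulo $\pdo{-\infty}{\ell}{\rr}$.

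The main obstacle I expect is the stability of the off-diagonal structure under the iteration, as this is what makes $T$ elliptic on each block: it relies crucially on $[H(t),P_\pm(t)]=0$, which implies $[H,P_+XP_-]=P_+[H,X]P_-$, so the $P_+\cdot P_-$ block is stable under $T$ without any principal-symbol calculation; were this not exact, the residue $R_{k+1}$ could acquire diagonal components and the ellipticity-based inversion would fail. A secondary technical point is the uniform-in-$t$ Borel summation in the time-dependent class $\pdo{m-1}{\ell}{\rr}$, but once the off-diagonal reduction is in place this follows from the standard Borel procedure applied to the defining Fr\'echet seminorms.
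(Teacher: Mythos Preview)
Your argument is correct and follows essentially the same route as the paper. Both proofs reduce to the off-diagonal part $A_{\mathrm{off}}=P_+AP_-+P_-AP_+$ via \eqref{pap}, observe that $[H,\cdot]$ preserves the $P_+(\cdot)P_-$ and $P_-(\cdot)P_+$ blocks because $[H,P_\pm]=0$, and invert it there block by block using that on those blocks its principal symbol is multiplication by $\pm 2\lambda$. The only cosmetic difference is that the paper writes down the block parametrix at the operator level,
\[
Z_{k-1}=\tfrac12 P_+\,|H|^{-1}A_k\,P_- - \tfrac12 P_-\,|H|^{-1}A_k\,P_+,
\]
with $|H|^{-1}:=f(H^2)$ for $f\in S^{-1/2}(\rr)$ equal to $|x|^{-1/2}$ away from $0$ (a pseudodifferential operator by Corollary~\ref{corcor}, commuting with $H$ and $P_\pm$), and then uses the exact identities $HP_\pm|H|^{-1}=\pm P_\pm\bmod\pdo{-\infty}{0}{\rr}$; you instead quantize the symbol $(2\lambda)^{-1}$. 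These are the same step.

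One small point of precision: as you phrase it, $W_k$ is ``a quantization of $(2\lambda)^{-1}$ times the principal symbol of $R_k$'', which is not literally of the form $P_+(\cdot)P_-$, so $R_{k+1}=R_k-[H,W_k]$ is not \emph{exactly} of $+-$ type and the block structure could leak at subprincipal level. The fix you yourself indicate in the last paragraph---take $W_k=P_+\widetilde W_kP_-$ so that $[H,W_k]=P_+[H,\widetilde W_k]P_-$ exactly---is precisely what the paper's ansatz implements by sandwiching with $P_\pm$ and using that $|H|^{-1}$ commutes with them. With that adjustment the iteration and asymptotic summation go through verbatim.
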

\proof \step{1} Let $k\in \rr$ and suppose $A_k(t)\in\pdo{k}{\ell}{\rr}$ satisfies \eqref{pap}. We will show that there exists $Z_{k-1}\in\pdo{k-1}{\ell}{\rr}$ such that
\beq\label{hraa}
[H,Z_{k-1}] = A_k - A_{k-1}
\eeq
for some $A_{k-1}(t)\in \pdo{k-1}{\ell}{\rr}$ satisfying \eqref{pap}.

By the hypothesis that $A_k(t)\in\pdo{k}{\ell}{\rr}$ satisfies \eqref{pap} we can write
$$
A_k= P_+ A_k P_-  +  P_- A_k P_ + \mod  \pdo{-\infty}{\ell}{\rr}.
$$
Let us set
$$
Z_{k-1}=\12 P_+ \F  A_k  P_- - \12  P_- \F  A_k  P_+ \in \pdo{k-1}{\ell}{\rr},
$$
where $\F(t) \in \pdo{-1}{0}{\rr}$ is defined as $\F(t)=f(H^2(t))$ for some $f\in S^{-\12}(\rr)$ such that $f(x)=\module{x}^{-\12}$  outside of a neighborhood of $0$ in $\rr$. Note that $\F(t)$ is principally scalar because $H(t)$ is of Dirac type. Furthermore, using that $x \one_{\clopen{0,+\infty}}(x) \module{x^2}^{-\12}=  \one_{\clopen{0,+\infty}}(x)$ away from zero, we obtain
\beq\label{eqtlllm}
 \bea
 &H P_+ \F=P_+ \mod \pdo{-\infty}{0}{\rr}, \\
  & H P_- \F= -P_-  \mod \pdo{-\infty}{0}{\rr},
 \eea
 \eeq
where we applied Corollary \ref{corcor} to conclude that the error term is in  $\pdo{-\infty}{0}{\rr}$. It also commutes with $H(t)$ and $P_\pm(t)$. Therefore,
 \beq\label{horoki1}
 \bea
  H Z_{k-1} & = \12 H P_+  \F A_{k} P_-  - \12 H P_-  \F A_{k} P_+ \\
  &=  \12 P_+  A_{k} P_-  + \12 P_- A_{k} P_+ \mod  \pdo{-\infty}{0}{\rr} \\
    &= \12 A_k \mod  \pdo{-\infty}{0}{\rr}.
 \eea
 \eeq
 Similarly,
  \beq\label{horoki2}
  \bea
    Z_{k-1} H & = \12 P_+ \F A_{k}H  P_-  - \12 P_-  \F A_{k} H  P_+ \\
   & = \12 P_+   A_{k} \F H P_-  - \12 P_-   A_{k}\F H P_+
   \fantom +  \12 P_+ [\F, A_{k}]H  P_-  - \12 P_-  [\F ,A_{k}] H  P_+\\
   &= - \12 P_+  A_{k} P_-  - \12 P_- A_{k} P_+ \fantom +  \12 P_+ [\F, A_{k}]H  P_-  - \12 P_-  [\F ,A_{k}] H  P_+ \mod  \pdo{-\infty}{0}{\rr} \\
     &= - \12 A_k   +   \12 P_+ [\F, A_{k}]H  P_-  - \12 P_-  [\F ,A_{k}] H  P_+ \\[-2mm] & \phantom{= - \12 A_k   +   \12 P_+ [\F, A_{k}]H  P_-  - \12 P_- P} \mod \pdo{-\infty}{0}{\rr}.
  \eea
  \eeq
   By substracting the two identities \eqref{horoki1} and \eqref{horoki2} we find that  $[H,Z_{k-1}]=A_k-A_{k-1}$ with $A_{k-1}$ satisfying \eqref{pap} as requested, and belonging to $\pdo{k-1}{\ell}{\rr}$ in view of $[\F, A_{k}]\in \pdo{k-2}{\ell}{\rr}$.

 \step{2} Let now $A_m=A$, and for $j\in\nn$, let $Z_{m-j}\in \pdo{m-j}{l}{\rr}$ and $A_{m-j}\in \pdo{m-j}{l}{\rr}$ be constructed recursively from $A_{m-j+1}$ using \emph{Step 1}. Let $Z\simeq \sum_{j=1}^{\infty} Z_{m-j}\in \pdo{m-1}{l}{\rr}$ be defined by asymptotic summation. Then $Z$ satisfies \eqref{hra} in view of \eqref{hraa}. \qed

\begin{lemma} \label{lemego2}
Let $m,\ell,\ell_0\in \rr$. Suppose $B(t)\in \pdo{m}{\ell_0}{\rr}$ solves
\beq\label{bhb0}
\p_t B(t) = i [H,B](t)+ E_m(t)
\eeq
for some $E_m(t)\in \pdo{m}{\ell}{\rr}$ satisfying \eqref{pap}.  Then there exists $R(t)\in \pdo{m-1}{\ell}{\rr}$ such that
\beq\label{brh}
\p_t (B(t)+R(t)) = i [H,B+R](t) \mod \pdo{-\infty}{\ell}{\rr}.
\eeq
\end{lemma}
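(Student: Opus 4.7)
The plan is to construct $R$ iteratively as an asymptotic sum $R \sim \sum_{k\geq 1} R_k$ with $R_k \in \pdo{m-k}{\ell}{\rr}$, lowering the spatial order of the residual error by one at each step. By standard asymptotic summation this produces $R\in\pdo{m-1}{\ell}{\rr}$ with residual error in $\bigcap_N\pdo{m-N}{\ell}{\rr}=\pdo{-\infty}{\ell}{\rr}$, as required. For the first correction, the assumption that $E_m$ satisfies \eqref{pap} allows us to invoke Lemma \ref{lemego1}, yielding $Z_1\in\pdo{m-1}{\ell}{\rr}$ with $[H,Z_1]=E_m$ modulo $\pdo{-\infty}{\ell}{\rr}$. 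Setting $R_1:=-iZ_1$ and using \eqref{bhb0} then gives
\begin{equation*}
\p_t(B+R_1)-i[H,B+R_1]=E_m-i\p_t Z_1-[H,Z_1]=-i\p_t Z_1\mod\pdo{-\infty}{\ell}{\rr},
\end{equation*}
so that the new error $-i\p_t Z_1\in\pdo{m-1}{\ell-1}{\rr}\subset\pdo{m-1}{\ell}{\rr}$ has spatial order reduced by one.

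At step $N+1$ one proceeds in the same way, provided the current residual error continues to satisfy \eqref{pap}. From the explicit formula $Z_{N+1}=\tfrac12(P_+\F E^{(N)}P_--P_-\F E^{(N)}P_+)$ furnished by the proof of Lemma \ref{lemego1}, combined with the bound $\p_t P_\pm\in\pdo{0}{-1-\delta}{\rr}$ from Lemma \ref{ptH0t} and the smoothing identity $P_+P_-=(\chi-\chi^2)(H)\in\pdo{-\infty}{0}{\rr}$, one checks that the diagonal blocks $P_\pm(\p_t Z_{N+1})P_\pm$ are smoothing up to residual contributions of order $m-N-1$ that carry an additional $t$-decay of $-1-\delta$. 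These diagonal remainders can be eliminated by augmenting $R_{N+1}$ with a diagonal correction that solves, at the level of principal symbols, the transport equation $\p_t\sigma(R_{N+1}^{\rm d})-\{\sigma_1(H),\sigma(R_{N+1}^{\rm d})\}=-\sigma(E^{(N)}_{\rm d})$ along the Hamiltonian flow of $\sigma_1(H)$ within each of the two spectral blocks on which $H$ is principally scalar; the required integration in $t$ falls into $\pdo{m-N-1}{\ell}{\rr}$ thanks to the extra $-1-\delta$ decay.

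The principal difficulty lies in this bookkeeping of diagonal contributions: while Lemma \ref{lemego1} inverts the commutator on the off-diagonal component, the diagonal remainders generated by $\p_t$ hitting the spectral projections require the transport argument above. In the scalar setting of \cite{inout} the issue is invisible since all operators commute modulo smoothing; here, in the matrix setting of Hypothesis \ref{hyptd}, it is the strict inequality $\delta>0$ in \eqref{eq:tdp} that supplies the extra $t$-decay needed to close the iteration and guarantee that the asymptotic sum $R$ lies in $\pdo{m-1}{\ell}{\rr}$.
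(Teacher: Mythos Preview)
Your overall strategy---iterate Lemma~\ref{lemego1} for the off-diagonal part and a transport equation for the diagonal part, then take an asymptotic sum---is precisely the paper's approach. However, the iteration as you formulate it does not close. You require at each step that the current error $E^{(N)}$ satisfy \eqref{pap}, i.e.\ $P_\pm E^{(N)} P_\pm\in\pdo{-\infty}{\ell}{\rr}$. But after one round this fails: the new error contains the transport remainder, which is an \emph{arbitrary} element of $\pdo{m-N-2}{\ell}{\rr}$ and has no reason to be off-diagonal modulo smoothing. Your attempt to rescue this via the extra $-1-\delta$ decay of $P_\pm(\p_t Z_{N+1})P_\pm$ applies only to the part of the error coming from $\p_t Z_{N+1}$, not to the lower-order junk produced by solving the transport equation only at the principal-symbol level; that junk is diagonal of order $m-N-2$ with \emph{no} extra $t$-decay, and at the next step you can neither feed it into Lemma~\ref{lemego1} (it is diagonal) nor integrate it while staying in the $\ell$-class by your argument.

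The paper avoids this by weakening the iteration hypothesis. Instead of \eqref{pap}, it assumes only that the error is off-diagonal \emph{modulo} $\pdo{m-k-1}{\ell}{\rr}$ (this is \eqref{bhb}). At each step one applies Lemma~\ref{lemego1} to the explicit off-diagonal part $P_+E_{m-k}P_-+P_-E_{m-k}P_+$ (which does satisfy \eqref{pap}), then applies the transport correction $S_{m-k-1}$ to the full diagonal part of the resulting error $E_{m-k-1}$, without any claim of extra $t$-decay. This yields \eqref{bzh2}, i.e.\ the same hypothesis one order lower, and the induction closes. Two smaller points: the formula $Z_{N+1}=\tfrac12(P_+\F E^{(N)}P_--P_-\F E^{(N)}P_+)$ is only the leading term in Lemma~\ref{lemego1}, not the full $Z$; and in your transport equation the right-hand side should be the diagonal of the \emph{new} error (after applying $Z_{N+1}$), not $E^{(N)}_{\rm d}$, which is smoothing by hypothesis.
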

\proof  \step{1} Let $k\in\nn_{0}$. Suppose for the moment that instead of \eqref{bhb0}, $B\in \pdo{m}{\ell_0}{\rr}$ satisfies
\beq\label{bhb}
\p_t B = i [H,B]+ P_+  E_{m-k} P_- +  P_-  E_{m-k} P_+ \mod \pdo{m-k-1}{\ell}{\rr}
\eeq
for some $E_{m-k}(t)\in \pdo{m-k}{\ell}{\rr}$. We will show that there exist $Z_{m-k-1}(t),S_{m-k-1}(t)\in \pdo{m-k-1}{\ell}{\rr} $ such that
\beq\label{bzh2}
\bea
\p_t (B+Z_{m-k-1} + S_{m-k-1})&=   i [H,B+Z_{m-k-1} + S_{m-k-1}] \fantom + P_+  E_{m-k-1} P_- +  P_-  E_{m-k-1} P_+ \mod \pdo{m-k-2}{\ell}{\rr}
\eea
\eeq
for some $E_{m-k-1}(t)\in \pdo{m-k-1}{\ell}{\rr}$.

First, using Lemma \ref{lemego1} we can find $Z_{m-k-1}(t)\in \pdo{m-k-1}{\ell}{\rr}$ such that
$$
i [H, Z_{m-k-1}] =P_+  E_{m-k} P_- +  P_-  E_{m-k} P_+ \mod \pdo{-\infty}{\ell}{\rr}.
$$
Inserting this into \eqref{bhb} we get
\beq\label{bzh}
\p_t (B+ Z_{m-k-1})= i [H, B+ Z_{m-k-1}] + E_{m-k-1}
\eeq
for some $E_{m-k-1}(t)\in \pdo{m-k-1}{\ell}{\rr}$. Next, we claim that we can find $S_{m-k-1}(t)\in \pdo{m-k-1}{\ell}{\rr}$ such that
\beq\label{shm}
\p_t S_{m-k-1} - i [H , S_{m-k-1}] = - P_+ E_{m-k-1} P_+ -  P_- E_{m-k-1} P_- \mod \pdo{m-k-2}{\ell}{\rr}.
\eeq
Indeed, it suffices to integrate the resulting ODE for the principal symbol of $S_{m-k-1}$, see e.g.~\cite[(4.17)--(4.18)]{Cordes1982}. Then, by adding the  identities \eqref{bhb} and \eqref{bzh} we obtain \eqref{bzh2} as wanted.

\step{2} Now, if $B$ satisfies \eqref{bhb0}, then it satisfies \eqref{bhb} with $k=0$. By iterating \emph{Step 1} indefinitely we obtain two sequences of operators $Z_{m-1-j}, S_{m-1-j}\in\pdo{m-1-j}{\ell}{\rr}$ for $j\in \nn_0$ such that the asymptotic sum $R\simeq \sum_{j=0}^\infty (Z_{m-1-j}+S_{m-1-j})\in \pdo{m-1}{\ell}{\rr}$ satisfies $\eqref{brh}$.\qeds

\refproof{Proposition \ref{lemkey}} Recall that we have denoted  $P_+(t)=\chi(H(t))$ and $P_-(t)=\one - \chi(H(t))$, and that $\p_t P_+(t)\in \pdo{0}{-1-\delta}{\rr}$ by Lemma \ref{ptH0t}. By differentiating
  the identity $(P_+(t))^2=P_+(t)$ we obtain
$$
P_\pm(t) \big(\p_t P_+(t) \big)  P_\pm(t) = 0.
$$
Furthermore, using that $H(t)$ commutes with $P_+(t)$ we can write
$$
\p_t P_+(t)= i [H(t),P_+(t)] + E_0(t)
$$
where $E_0(t)=\p_t P_+(t)\in\pdo{0}{-1-\delta}{\rr}$. Therefore, the assumption of Lemma \ref{lemego2} applied to $B(t)=P_+(t)$ are satisfied, with $m=\ell_0=0$ and $\ell=-1-\delta$. The lemma gives us the existence of $R_{-1}(t)\in\pdo{-1}{-1-\delta}{\rr}$ and $R_{-\infty}(t)\in\pdo{-\infty}{-1-\delta}{\rr}$ such that
$$
\p_t \big(P_+(t) + R_{-1}(t)\big)= i [ H (t),P_+(t)+ R_{-1}(t)] + R_{-\infty}(t).
$$
By Duhamel's principle we deduce that
\beq\label{duha}
\bea
U(t,0) \big( P_+(0) + R_{-1}(0)\big)  U(0,t)&=P_+(t)+R_{-1}(t)\fantom -  U(t,0)\bigg(\int_0^t U(0,s) R_{-\infty}(s)U(s,0) ds\bigg) U(0,t) \\
&\eqdef P_+(t)+R(t).
\eea
\eeq
With this notation, $R(t)-R_1(t)$ is bounded in $\B{s_1}{s_2}$ for all $s_1,s_2\in\rr$, uniformly in $t$ with all derivatives, therefore $R(t)-R_{-1}(t)\in \pdo{-\infty}{0}{\rr}$ and consequently $R(t)\in \pdo{-1}{0}{\rr}$. Furthermore, $R(0)=R_{-1}(0)$, which in combination with \eqref{duha} shows that
$$
U(t,0) \big( P_+(0) + R(0)\big)  U(0,t)=P_+(t)+R(t).
$$
as wanted. Finally, using the definition of $R(t)$ and the fact that  $R_{-1}(t)\in\pdo{-1}{-1-\delta}{\rr}$ and $R_{-\infty}(t)\in\pdo{-\infty}{-1-\delta}{\rr}$, we find that
$$
\slim_{t\to\pm\infty} U(0,t) R(t) U(t,0)= -\slim_{t\to\pm\infty}\int_0^t U(0,s) R_{-\infty}(s)U(s,0) ds  \in \Psi^{-\infty}(E).
$$
as claimed. \qeds

\begin{remark} It is possible to prove a generalization of  Proposition \ref{lemkey} in which  $\chi\big(H(0))$ is replaced by an arbitrary pseudo-differential operator. The necessary modifications can be carried out as in the proof of  \cite[Thm.~4.1]{Cordes1982}; this  has in fact no bearing on the large-time aspects on which we focus here.
\end{remark}

Note that because of the improvement in Sobolev order, the operators $R(t)$ in \eqref{eqitislc} are \emph{compact} in $B(H^m(\Sigma;E),H^m(\Sigma;E))$ for all $t\in \rr$. Note also that we obtain immediately an analogous statement if $\chi=\one_{\opencl{-\infty,0}}$ (instead of $\one_{\clopen{0,+\infty}}$) outside of a bounded neighborhood of $0$.

\subsection{Time-dependent scattering theory}\label{scatteringsection} Recall that by  Hypothesis \ref{hyptd} we have in particular
$$
\p_t H(t) \in \pdo{1}{-1-\delta}{\rr},
$$
where so far $\delta>0$ was allowed to be close to zero. In what follows, however, we will frequently need the short-range assumption  $\delta>1$ which  serves  to control integrability of terms involving $t \p_t H(t)$ for large $\module{t}$.


For $t,s\in\rr$ we define the operators
$$
\bea
W(t,s) &\defeq e^{-i t H(t)}U(t,s)e^{i s H(s)}.
\eea
$$
Remark that $W(t,t)=\one$ and $W(t,s)W(s,r)=W(t,r)$ for $t,s,r\in\rr$.
\bep\label{prop:sc} If $\delta>1$ then we have the existence of ``M{\o}ller wave operators'' defined as limits in the strong operator  topology, namely,
$$
\slim_{t\to \pm\infty} W(0,t)\in B(H^m(\Sigma;E))
$$
for all $m\in\rr$, and the same holds true for $W(t,0)=\big(W(0,t)\big)^{-1}$.
\eep
\proof The proof is a standard use of Cook's method. We have
$$
\p_t W(0,t)= \p_t(U(0,t)e^{itH(t)})= iU(0,t)\left(t\p_tH(t)\right)e^{itH(t)}.
$$
The r.h.s.~is $O(\bra t \ket^{-\delta})$ in $B(H^{m+1}(\Sigma;E),H^{m}(\Sigma;E))$ because $\p_tH(t)\in \pdo{1}{-1-\delta}{\rr}$ by our assumptions, and the other factors are uniformly bounded by Lemma \ref{lembd}. Therefore, for all $v\in H^{m+1}(\Sigma;E)$,
$$
\lim_{t\to\pm\infty}W(0,t)v= v + \int_0^{\pm \infty} \p_t W(0,t)  v \,dt
$$
exists in $H^{m}(\Sigma;E)$. By density of $H^{m+1}(\Sigma;E)$ in $H^{m}(\Sigma;E)$ and uniform boundedness of $W(0,t)$, we conclude that $W(0,t)$  strongly converges on the whole space $H^{m}(\Sigma;E)$.
\qeds
.

For convenience of notation, instead of writing strong operator limits we will consider   time-dependent operators as functions of $\overline{\rr}=\{ -\infty\}\cup \rr\cup\{ +\infty\}$. With this convention, $H(\pm\infty)=H_\pm$ in view of Hypothesis \ref{hyptd}.

We will use the following consequence of Proposition \ref{lemkey}.

\bel\label{compactoperator} Let $I,J\subset \rr$ be two intervals such that $I\cap J$ is bounded.  If $\delta>1$ then  for all $t,s\in\overline{\rr}$ and $m\in\rr$ the operator
\beq\label{eq:wij}
\one_I \big(H(t)\big) W( t,s ) \one_J (H(s)) \in \B{m}{m}
\eeq
is compact. Furthermore,  in the case $t=+\infty$ and  $s=-\infty$ it belongs to $\Psi^{-\infty}(E)$.
\eel
\proof If $I$ or $J$ is bounded then the assertion follows from Corollary \ref{corcor}. Otherwise, we can find $\chi_I,\chi_J\in S^0(\rr,[0,1])$ such that  $\chi_I=\one_{\clopen{0,+\infty }}$ and $\chi_J=\one_{\opencl{-\infty,0}}$  outside of a bounded neighborhood of $0$, or the other way around.  By Proposition \ref{lemkey}, for all $t\in\rr$ we have
$$
 \chi_I (H(0))   + R(0)    = U(0,t) \big( \chi_I(H(t)) + R(t)\big) U(t,0),
$$
with $R(t)\in\pdo{-1}{0}{\rr}$, and since the l.h.s.~does not depend on $t$ we conclude
$$
U(0,t) \big( \chi_I(H(t)) + R(t)\big) U(t,0)=U(0,s) \big( \chi_I(H(s)) + R(s)\big) U(s,0)
$$
for all $s\in\rr$, hence
\beq\label{chit}
 \chi_I(H(t)) U(t,s) = U(t,s)\big( \chi_I(H(s)) + R(s) \big)- R(t) U(t,s).
 \eeq
Recalling $W(t,s)=e^{-i t H(t)}U(t,s)e^{i s H(s)}$ and using that functions of $H(t)$ commute we deduce from \eqref{chit} that
\beq\label{errt}
\chi_I \big(H(t)\big) W( t,s ) = W(t,s) \chi_I\big(H(s)\big)  + W(t,s) \widetilde{R}(s)    - \widetilde{R}(t) W(t,s),
\eeq
where
$$
 \widetilde{R}(t)  = e^{-i t H(t)} R(t) e^{i t H(t)}\in \B{m}{m+1}
$$
for all $t\in \rr$. We can also write
$$
 \widetilde{R}(t) = W(t,0) U(0,t) R(t) U(t,0) W(0,t).
 $$
When $t\to\pm\infty$, the family $U(0,t) R(t) U(t,0)$  converges in the strong operator topology of $\B{m}{m+1}$ to an operator in $\Psi^{-\infty}(E)$ by  \eqref{slims} of Proposition \ref{lemkey}, and $W(t,0),W(0,t)$ preserve the mapping properties uniformly in $t$ by Proposition \ref{prop:sc}. Therefore, $\widetilde{R}(t)$ is smoothing  for infinite $t$. It follows that the two last summands in \eqref{errt} extended to $t\in\overline{\rr}$ are compact (and smoothing for infinite $t$) and can be disregarded.  By Corollary \ref{corcor} and the fact that $\chi_I \chi_J$ is compactly supported,  $\chi_I \big(H(t)\big)\chi_J \big(H(s)\big)$ is in $\Psi^{-\infty}(E)$ and thus compact, and  we conclude that the operator
$$
\chi_I \big(H(t)\big) W( t,s ) \chi_J \big(H(s)\big)
$$
is compact (and smoothing for infinite $t$). From this we can deduce that  the operator $\one_I \big(H(t)\big) W( t,s ) \one_J (H(s))$ is compact (and smoothing for infinite $t$) by using again Corollary \ref{corcor} to estimate the difference.
\qed

\subsection{Generalization for non-selfadjoint perturbations} In this paragraph we briefly comment on  generalisations to perturbations by a non-selfadjoint potential. This discussion  is auxiliary and is not needed in the next sections.

Specifically, instead of $H(t)$ we can consider operators of the form
$$
H(t)+V(t), \quad V(t)\in\pdo{0}{-\delta}{\rr},
$$
where $V(t)$ is not assumed to have any particular self-adjointness properties, but decays to zero as $t\to\pm\infty$.

Then, if we keep the definition of the principally scalar operator $\Lambda(t)=(1+H(t)^2)^\12$ unchanged, we have $[ H(t)+V(t), \Lambda^s(t)]\in \pdo{s}{-\delta}{\rr}$ for all $s\in \rr$. Furthermore, for each $t\in\rr$, $H(t)+V(t)$ is a bounded perturbation of a generator of a strongly continuous bounded semi-group, and therefore it is also the generator of a strongly continuous semi-group (see e.g.~\cite[Thm.~X.50]{RS}). In consequence, $H(t)+V(t)$ has a well-defined Schrödinger propagator $U_V(t,s)$ which satisfies  the uniform boundedness properties  from  Lemma \ref{lembd}.

Then, the proof of Proposition \ref{lemkey} still yields a statement of the form
$$
 U_V(t,0) \big(  \chi (H(0))   + R(0)  \big) U_V(0,t) =\chi(H(t)) + R(t),
$$
with $R(t)$ uniformly bounded in $\B{m}{m+1}$ for all $m\in\rr$, even though it is no longer true in general that $R(t)$ is necessarily a pseudo-differential operator in this situation. Nevertheless, if we set
$$
W_V(t,s)= e^{-it H(t)} U_V(t,s) e^{is H(s)}
$$
then the analogues of Proposition \ref{prop:sc} and Lemma \ref{compactoperator} for $W_V(t,s)$ instead of $W(t,s)$ remain valid.

In consequence, the arguments of the next section will imply the Fredholm property of the operator $\p_t - i (H(t)+V(t))$ with APS boundary conditions defined using spectral projections of $H(t)$. The inclusion of the potential $V(t)$ will   however not be needed when considering the geometric Dirac operator $D$.

\section{Fredholm inverses in the evolutionary model}\label{sec:fredholm}

\subsection{Fredholm property}\label{ss:fp} We continue the analysis of the evolutionary model introduced in Hypothesis \ref{hyptd}. Let us denote
$$
\widetilde{D}=\p_t-iH(t).
$$
The tildes will be used to stress that we consider the evolutionary model from \sec{sec:scattering} rather than the geometric Dirac operator $D$ from \sec{sec2}.

For $m\in\rr$ and small $\epsilon>0$ we define:
\beq\label{xyspaces}
\bea
\widetilde{\cY}^m&\defeq \bra t \ket^{-\12-\epsilon} L^2(\rr,H^m(\Sigma;E)),\\
\widetilde{\cX}^m&\defeq\{ u \in C^1(\rr,H^m(\Sigma;E)) \st   \widetilde{D }u \in \widetilde{\cY}^m\}.
\eea
\eeq
For $u\in \widetilde{\cX}^m$ and $t\in \overline{\rr}$ we denote
$$
\widetilde{\varrho}_\pm u\defeq  \lim_{t\to\pm \infty} e^{-itH(t)} u(t) \in H^m(\Sigma;E)
$$
provided that the limit exists. For each $u$, $\widetilde{\varrho}_+ u$ and $\widetilde{\varrho}_- u$ are interpreted as the asymptotic data of $u$, and the next proposition asserts that a solution of $\widetilde D u = f$ can be recovered from $\widetilde{\varrho}_+ u$ (as well as from $\widetilde{\varrho}_- u$).

\begin{proposition}\label{invertible}  If $\delta>1$ then for all $m\in\rr$  the operator
$$
\widetilde{\varrho}_\pm \oplus  \widetilde D : \widetilde{\cX}^m \to H^m(\Sigma;E)\oplus \widetilde{\cY}^m
$$
is well-defined, bounded and boundedly invertible.
\end{proposition}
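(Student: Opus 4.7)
My plan is to derive an explicit inversion formula based on Duhamel's principle combined with the M{\o}ller wave operators $W(\pm\infty, 0)$ supplied by Proposition~\ref{prop:sc}. A preliminary observation is that the weight defining $\widetilde{\cY}^m$ is square-integrable against $\bra s \ket^{-1/2-\epsilon}$, so by Cauchy--Schwarz
$$\|f\|_{L^1(\rr, H^m(\Sigma;E))} \leq \Big( \int_\rr \bra s \ket^{-1-2\epsilon} ds \Big)^{1/2} \|f\|_{\widetilde{\cY}^m} < \infty$$
for any $\epsilon > 0$. Combined with the uniform boundedness of $U(t,s)$ on $H^m$ (Lemma~\ref{lembd}), this ensures absolute convergence in $H^m$ of Bochner integrals of the form $\int_a^b U(\cdot, s) f(s)\, ds$ for arbitrary $a, b \in \overline{\rr}$.

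For well-definedness and boundedness of the operator, given $u \in \widetilde{\cX}^m$ with $f \defeq \widetilde{D} u$, the Duhamel formula and the definition of $W$ yield
$$e^{-itH(t)} u(t) = W(t, 0) \Big[ u(0) + \int_0^t U(0, s) f(s)\, ds \Big].$$
Letting $t \to \pm\infty$, the bracket converges in $H^m$ by the integrability above, and $W(t,0) \to W(\pm\infty, 0)$ strongly on $H^m$ by Proposition~\ref{prop:sc}. This shows that $\widetilde{\varrho}_\pm u$ exists, equals $W(\pm\infty, 0)\big[u(0) + \int_0^{\pm\infty} U(0, s) f(s)\, ds\big]$, and is controlled by $\|u(0)\|_{H^m} + \|f\|_{\widetilde{\cY}^m}$, which establishes continuity of $\widetilde{\varrho}_\pm \oplus \widetilde{D}$.

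For invertibility, given $(\varphi, f) \in H^m(\Sigma;E) \oplus \widetilde{\cY}^m$, I set
$$u_0 \defeq W(0, \pm\infty) \varphi - \int_0^{\pm\infty} U(0, s) f(s)\, ds, \qquad u(t) \defeq U(t, 0) u_0 + \int_0^t U(t, s) f(s)\, ds,$$
where $W(0, \pm\infty) \defeq W(\pm\infty, 0)^{-1}$. A direct verification shows that $\widetilde{D} u = f$, and applying the formula from the previous paragraph gives $\widetilde{\varrho}_\pm u = W(\pm\infty, 0) W(0, \pm\infty) \varphi = \varphi$; boundedness of the inverse follows from uniform boundedness of $U(t,0)$ and of the wave operators. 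Uniqueness is automatic: if $\widetilde{D} u = 0$ and $\widetilde{\varrho}_\pm u = 0$, then $u(t) = U(t,0) u(0)$ and $W(\pm\infty, 0) u(0) = 0$, forcing $u(0) = 0$ by invertibility of the wave operator. The only delicate point is that the whole argument rests on Proposition~\ref{prop:sc}, which is precisely where the short-range hypothesis $\delta > 1$ enters through the integrability of $t\,\p_t H(t)$; the remaining steps only require $\epsilon > 0$.
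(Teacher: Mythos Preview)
Your proof is correct and follows essentially the same approach as the paper: Duhamel's formula combined with the strong convergence of $W(t,0)$ from Proposition~\ref{prop:sc} for well-definedness, and the explicit inverse $u(t)=U(t,0)W(0,\pm\infty)\varphi+\int_{\pm\infty}^t U(t,s)f(s)\,ds$, which is exactly your formula after collapsing the two integrals. Your explicit Cauchy--Schwarz bound making $\widetilde{\cY}^m\hookrightarrow L^1(\rr,H^m)$ visible, and the added uniqueness remark via invertibility of $W(\pm\infty,0)$, are minor clarifications but not a different route.
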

\proof  For  $u\in\widetilde{\cX}^m$, the well-posedness of the inhomogenous Cauchy problem, see e.g.~\cite[\S5,~Thm.~5.2]{pazy}, can be expressed by the Duhamel formula
$$
u(t)=U(t,0) u(0) +\int_0^t U(t,s) f(s)ds
$$
for all $t\in\rr$, where $f=\widetilde{D}u\in \widetilde{\cY}^m$. Thus,
$$
\bea
 e^{-itH(t)}u(t)&= e^{-itH(t)} U(t,0) u(0)+\int_0^t e^{-itH(t)} U(t,0)U(0,s) f(s)ds,\\
 &=W(t,0)u(0)+\int_0^t W(t,0) U(0,s) f(s)ds,
\eea
$$
and this converges as $t\to\pm\infty$ by Proposition \ref{prop:sc}, Lemma \ref{lembd} and dominated convergence. This proves that $\widetilde{\varrho}_\pm \in B(\widetilde{\cX}^m, H^m(\Sigma;E))$ is well-defined as a strong operator limit.

Next, for any $v\in H^m(\Sigma;E)$ and $f\in \widetilde{\cY}^m$, we claim that $u=(\widetilde{\varrho}_\pm\oplus \widetilde{D})^{-1}(v,f)$ is given by the formula
\beq\label{eq:rec}
\bea
u(s)&=\lim_{t\to \pm\infty}\Big(U(s,t) e^{itH(t)}v + \int_{t}^s U(s,r) f(r)dr\Big) \\
 &= U(s,0)\lim_{t\to\pm\infty} W(0,t) v + \int_{\pm\infty}^s U(s,r) f(r)dr.
\eea
\eeq
Again, this converges by Proposition \ref{prop:sc}, Lemma \ref{lembd} and dominated convergence, and we easily find that $\widetilde{\varrho}_\pm u =v$ and $\widetilde{D} u= f$ indeed. \qeds

Next, we introduce abstract Atiyah--Patodi--Singer boundary conditions at infinity  by defining for each $m\in\rr$ the space
$$
\widetilde{\cX}^m_{\rm APS}\defeq\big\{ u \in \widetilde{\cX}^m \st \lim_{t\to+\infty}\one_{\opencl{-\infty,0}}(H(t))u(t)=0, \  \lim_{t\to-\infty}\one_{\clopen{0,+\infty}}(H(t))u(t)=0 \big\}.
$$

We denote by $\widetilde{D}_{\APS}=\widetilde{D} |_{\widetilde{\cX}^m_\APS}$ the restriction of
$\widetilde{D}$ to  $\widetilde{\cX}_{\APS}^m$.

The dependence on $m$ is not stressed explicitly  in the above notation, which is justified by the facts that for $m_2\geq m_1$,
$$\widetilde{\cX}^{m_2}_{\APS}\subset \widetilde{\cX}^{m_1}_{\APS}, \quad  \widetilde{\cY}^{m_2}\subset \widetilde{\cY}^{m_1},$$ and that the restriction of $ \widetilde{D} |_{\widetilde{\cX}^{m_1}_{\APS}}$  to  $\cX^{m_2}_\APS$  coincides with $\widetilde{D} |_{\widetilde{\cX}^{m_2}_{\APS}}$.

With all the ingredients that we already have, the proof of the Fredholm property of $\widetilde{D}_{\rm APS}$ can  now   be concluded from abstract Fredholm theory arguments (summarized in Appendix \sec{app:fredholm}) similarly as in \cite{BS}. Note that in our setting, the definition of $\widetilde{\varrho}_\pm u$ involves  a $e^{-itH(t)}$ factor, which will however not be a complication thanks to fact that it commutes with functions of $H(t)$.

\begin{proposition}\label{prop:modelcase} If $\delta>1$ then the operator $
\widetilde{D}_{\rm APS} : \widetilde{\cX}^m_{\rm APS} \to \widetilde{\cY}^m$
is Fredholm of index
$$
\ind(\widetilde{D}_{\rm APS})=\ind\big(\one_{\opencl{-\infty,0}}(H_+)  W|_{\Ran \one_{\open{-\infty,0}}(H_-)}\big).
$$
where $W=W(+\infty,-\infty)=\slim_{t\to+\infty,s\to-\infty}W(t,s)$.
\end{proposition}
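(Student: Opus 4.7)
The plan is to use the bounded invertibility from Proposition \ref{invertible} to reduce the Fredholm property of $\widetilde{D}_{\rm APS}$ to that of the concrete operator
\[
T \defeq \one_{\opencl{-\infty,0}}(H_+)\, W\big|_{\Ran \one_{\open{-\infty,0}}(H_-)}.
\]
Choosing the parametrization $\widetilde{\varrho}_- \oplus \widetilde D : \widetilde{\cX}^m \to H^m(\Sigma;E) \oplus \widetilde{\cY}^m$, the operator $\widetilde{D}$ becomes projection onto the second factor. Passing to the limit $s\to+\infty$ in the Duhamel representation from the proof of Proposition \ref{invertible} yields the explicit relation
\[
\widetilde{\varrho}_+ u = W\, \widetilde{\varrho}_- u + G (\widetilde{D} u), \qquad G f \defeq W(+\infty,0) \int_{-\infty}^{+\infty} U(0,r) f(r)\,dr,
\]
where $G : \widetilde{\cY}^m \to H^m(\Sigma;E)$ is bounded (the integral converges by Cauchy--Schwarz thanks to $\delta>1$) and surjective (any $w \in H^m(\Sigma;E)$ is reached by $f(r) = \chi(r) U(r,0) W(0,+\infty) w$ with a compactly supported $\chi$ of unit $L^1$-mass).

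Next I would translate the APS conditions into spectral conditions on the asymptotic data. Since $e^{-itH(t)}$ commutes with $\one_I(H(t))$, and the short-range convergence $H(t) \to H_\pm$ combined with the functional calculus in Section \ref{sec:pdo} ensures strong convergence of $\one_I(H(t))$ to $\one_I(H_\pm)$ on the relevant vectors, the APS conditions become $\widetilde{\varrho}_- u \in \Ran \one_{\open{-\infty,0}}(H_-)$ and $\one_{\opencl{-\infty,0}}(H_+) \widetilde{\varrho}_+ u = 0$. Under the isomorphism, $\widetilde{\cX}^m_{\rm APS}$ thus corresponds to pairs $(v,f) \in H^m(\Sigma;E) \oplus \widetilde{\cY}^m$ with $v \in \Ran \one_{\open{-\infty,0}}(H_-)$ and $\one_{\opencl{-\infty,0}}(H_+)(Wv + Gf) = 0$. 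Setting $f=0$ gives $\ker \widetilde{D}_{\rm APS} \cong \ker T$, while the surjectivity of $G$ yields an isomorphism of cokernels via the induced map $[f] \mapsto [\one_{\opencl{-\infty,0}}(H_+) G f]$. This is the abstract Fredholm-pair argument summarized in Appendix \ref{app:fredholm}, following \cite{BS}.

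The remaining task, which I expect to be the main technical point, is to show that $T$ itself is Fredholm. The natural candidate for an approximate inverse is
\[
T' \defeq \one_{\open{-\infty,0}}(H_-)\, W^{-1}\big|_{\Ran \one_{\opencl{-\infty,0}}(H_+)}.
\]
To check that $TT' - \one$ and $T'T - \one$ are compact, I would insert the partitions $\one = \one_{\opencl{-\infty,0}}(H_\pm) + \one_{\open{0,+\infty}}(H_\pm)$ between the factors. The resulting cross-terms are all of the form $\one_I(H_+) W \one_J(H_-)$ or $\one_J(H_-) W^{-1} \one_I(H_+)$ with $I \cap J \subset \{0\}$, hence compact (in fact smoothing) by Lemma \ref{compactoperator}, while the residual contribution from the finite-dimensional $0$-eigenspaces of $H_\pm$ is compact by Corollary \ref{corcor}. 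This establishes the Fredholm property of $T$, and hence of $\widetilde{D}_{\rm APS}$ with index equal to that of $T$, as claimed.
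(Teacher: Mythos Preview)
Your proposal is correct and follows essentially the same route as the paper: both use Proposition~\ref{invertible} to parametrize $\widetilde{\cX}^m$ by $H^m(\Sigma;E)\oplus\widetilde{\cY}^m$, identify the APS conditions with the spectral conditions $\one_{\clopen{0,+\infty}}(H_-)\widetilde\varrho_-u=0$ and $\one_{\opencl{-\infty,0}}(H_+)\widetilde\varrho_+u=0$ via commutativity with $e^{-itH(t)}$, and invoke Lemma~\ref{compactoperator} for the compactness of the off-diagonal blocks of $W$. The only cosmetic difference is that the paper packages the kernel/cokernel identification and the Fredholm property of $T$ into the abstract Proposition~\ref{prop:fredholm} (where $T=W^{--}$ and compactness of $W^{+-}$ alone suffices), whereas you unpack this directly and build the two-sided parametrix $T'$ by hand; both amount to the same argument.
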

\proof We apply Proposition \ref{prop:fredholm} from the appendix, with:
\beq\label{eq:dict}
\bea
\cX=\widetilde{\cX}^m, \quad  \cY=\widetilde{\cY}^m, \quad  P=\widetilde{D}_{\rm APS}: \widetilde{\cX}^m\to \widetilde{\cY}^m, \\
 \quad \varrho_-=\widetilde\varrho_-: \Ker \widetilde{D}_{\rm APS}\to H^m(\Sigma;E),\\ \varrho_+= \widetilde\varrho_+:\Ker \widetilde{D}_{\rm APS}\to H^m(\Sigma;E), \\
 \pi^+_-= \one_{\clopen{0,+\infty}}(H_-), \quad  \pi^-_+=\one_{\opencl{-\infty,0}}(H_+).
\eea
\eeq
The operator $W^{--}$ from the statement of  Proposition \ref{prop:fredholm} equals then
\beq\label{eq:qmm}
W^{--}= \one_{\opencl{-\infty,0}}(H_+)  W|_{\Ran \one_{\open{-\infty,0}}(H_-)},
\eeq
and similarly,  $W^{+-}= \one_{\open{0,+\infty}}(H_+)  W|_{\Ran \one_{\open{-\infty,0}}(H_-)}$.
Proposition \ref{prop:fredholm}  says that $W^{--}$ is Fredholm and
$$
\ind(W^{--})=\ind(\widetilde{D}|_{\Ker \varrho})
$$
where $\varrho= \pi_-^+\varrho_- \oplus \pi_+^-\varrho_+$, provided that  the following two statements are true:
\ben
\item[a)] $W^{+-}$ is compact,
\item[b)] $\varrho_\pm \oplus  P : \cX\to\cH_\pm\oplus \cY$  is boundedly invertible.
\een
In view of \eqref{eq:qmm},  a) is  in the present situation  a direct consequence  of Lemma \ref{compactoperator}. By \eqref{eq:dict}, b) follows directly  from Proposition \ref{invertible}.

It remains to prove that $\widetilde{D}_{\APS}=\widetilde{D}|_{\Ker \varrho}$, which amounts to checking that $\widetilde{\cX}^m_{\APS}= \Ker \varrho$. We have  indeed:
\begin{align*}
u\in  \widetilde{\cX}_{\APS}^m &\,\iff\, \lim_{t\to +\infty} \one_{\opencl{-\infty,0}}(H(t))u(t)=0=\lim_{t\to -\infty} \one_{\clopen{0,+\infty}}(H(t))u(t) \\
&\,\iff\, \lim_{t\to +\infty}\one_{\opencl{-\infty,0}}(H(t))e^{-it H(t)}u(t)=0=\lim_{t\to -\infty} \one_{\clopen{0,+\infty}}(H(t))e^{-it H(t)}u(t)\\
&\,\iff\,  \pi_+^-\varrho_+ u=0=\pi_-^+\varrho_- u\\
&\,\iff\,  u\in \Ker\varrho,
\end{align*}
which concludes the proof.
\qed

\subsection{Positivity properties of Fredholm inverses}\label{ss:positivity} We will now prove several properties of Fredholm inverses of $\widetilde{D}_{\rm APS}$ that will be useful in analysing microlocal properties of Fredholm inverses of $\widetilde{D}_{\rm APS}$.

Let us first introduce the retarded/advanced inverse of $\widetilde{D}$,  defined by
$$
\big(\widetilde D_+^{-1} f\big) (t) \defeq \int_{-\infty}^t U(t,s) f(s) \,ds, \quad \big(\widetilde D_-^{-1} f\big) (t) \defeq \int_{+\infty}^{t} U(t,s) f(s) \,ds
$$
for all $f\in \widetilde \cY^m$. Then, $\widetilde{D}_\pm^{-1}:\widetilde\cY^m\to \widetilde\cX^m$ is bounded $\widetilde{D}\circ \widetilde{D}_\pm^{-1}=\one$, and from \eqref{eq:rec} we can conclude that
$$
\widetilde{D}_\pm^{-1} =(\widetilde\varrho_\mp\oplus \widetilde D)^{-1}(0\oplus\one).
$$

Recall that for  $t\in \overline{\rr}$, $H(t)$ is self-adjoint w.r.t.~the $t$-dependent {\Red $L^2_t(\Sigma;E)$ scalar product with norm $( \cdot | \cdot)_{L^2_t}=( \cm(t) \cdot  | \cm(t)\, \cdot)_{L^2}$.}
We also denote by $( \cdot | \cdot)_{L^2_t}$ the $L^2_t(\Sigma;E)$-pairing of $H^m(\Sigma;E)$ and $H^{-m}(\Sigma;E)$, and abbreviate it by  $( \cdot | \cdot)_{L^2_\pm}$ in the case $t=\pm\infty$.

For square-integrable functions $f_1$ and $f_2$ with values in respectively $H^m(\Sigma;E)$ and $H^{-m}(\Sigma;E)$, we denote
$$
(f_1| f_2)= \int_{\rr} ( f_1(t) | f_2(t))_{L^2_t} \,dt.
$$

\begin{lemma}\label{sff} Let $\widetilde\varrho_\pm^{-1}: H^m(\Sigma;E)\to \Ker \widetilde D$ be defined by $\widetilde\varrho_\pm^{-1}=(\widetilde\varrho_\mp\oplus \widetilde D)^{-1}(\one\oplus 0)$. Then $\widetilde\varrho_\pm^{-1}$ is the inverse of $\widetilde \varrho_\pm|_{\Ker \widetilde{D}}$, and it satisfies
$$
(\widetilde\varrho_\pm \widetilde D_\pm^{-1} f | h )_{L^2_\pm} = \pm ( f | \widetilde\varrho_\pm^{-1} h)
$$
for all $f\in \widetilde\cY^{-m}$ and $h\in H^{m}(\Sigma;E)$.
\end{lemma}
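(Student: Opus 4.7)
The lemma has two parts, and the plan is to handle each separately.

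For the inverse-relation assertion, I would simply invoke Proposition \ref{invertible}: since $\widetilde\varrho_\pm\oplus\widetilde D\colon\widetilde\cX^m\to H^m(\Sigma;E)\oplus\widetilde\cY^m$ is a bounded bijection, restricting inputs to pairs of the form $(h,0)$ yields that $\widetilde\varrho_\pm|_{\Ker\widetilde D}$ is a bijection onto $H^m(\Sigma;E)$, with inverse $(\widetilde\varrho_\pm\oplus\widetilde D)^{-1}(\one\oplus 0)$; this is $\widetilde\varrho_\pm^{-1}$, with the subscript convention parallel to that used for the retarded/advanced parametrices $\widetilde D_\pm^{-1}$.

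For the duality formula, the plan is to prove a Green's identity. I would set $u=\widetilde D_\pm^{-1}f$ and $v=\widetilde\varrho_\pm^{-1}h$, so that $\widetilde Du=f$ with $\widetilde\varrho_\mp u=0$, while $\widetilde Dv=0$ with $\widetilde\varrho_\pm v=h$. Since $H(t)$ is self-adjoint in the $L^2_t$-scalar product, the cross terms $(iH\phi|\psi)_{L^2_t}+(\phi|iH\psi)_{L^2_t}$ vanish, so the formal product rule gives
\[
\partial_t(u(t)|v(t))_{L^2_t}=(\widetilde Du|v)_{L^2_t}+(u|\widetilde Dv)_{L^2_t}=(f(t)|v(t))_{L^2_t},
\]
which integrated over $\rr$ becomes $\bigl[(u|v)_{L^2_t}\bigr]_{-\infty}^{+\infty}=(f|v)$. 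To evaluate the boundary limits I would exploit that $e^{-itH(t)}$ is $L^2_t$-unitary, so $(u(t)|v(t))_{L^2_t}=(e^{-itH(t)}u(t)|e^{-itH(t)}v(t))_{L^2_t}$; Hypothesis \ref{hyptd} provides the convergence $L^2_t\to L^2_\pm$, so the right-hand side converges as $t\to\pm\infty$ to $(\widetilde\varrho_\pm u|\widetilde\varrho_\pm v)_{L^2_\pm}$. Inserting the boundary conditions $\widetilde\varrho_\mp u=0$ and $\widetilde\varrho_\pm v=h$ collapses the identity to $\pm(\widetilde\varrho_\pm u|h)_{L^2_\pm}=(f|v)$; the sign is determined by whether the surviving boundary contribution sits at the upper ($+$) or lower ($-$) endpoint of integration.

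The main obstacle is that the $L^2_t$ inner product depends on $t$, so the product rule for $\partial_t(u|v)_{L^2_t}$ actually picks up an additional contribution involving $\partial_t\cm(t)$ beyond $(\partial_t u|v)_{L^2_t}+(u|\partial_t v)_{L^2_t}$, and this extra piece does not cancel against the $iH$ terms. Thanks to $\partial_t\cp\in\pdo{0}{-1-\delta}{\rr}$ from \eqref{eq:tdp}, this correction is $O(\langle t\rangle^{-1-\delta})$ and hence integrable, so the improper integrals remain absolutely convergent; the delicate step is to verify that this correction is absorbed into the asymptotic identification $L^2_t\to L^2_\pm$ rather than polluting the $(f|v)$ pairing, which I would handle by regrouping the integration by parts. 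The mapping properties of $u$ and $v$ and the dominated-convergence justifications for the limits follow from Lemma \ref{lembd} and Proposition \ref{prop:sc}, after an initial density reduction to smooth compactly supported data.
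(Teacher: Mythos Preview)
Your Green's-formula approach is a genuine alternative to the paper's argument, but your treatment of the $t$-dependent inner product contains a gap.

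The paper does not integrate by parts. It writes out the explicit formulae
\[
\widetilde\varrho_\pm\widetilde D_\pm^{-1}f=\lim_{t\to\pm\infty}\int_{\mp\infty}^{\pm\infty}W(t,0)U(0,s)f(s)\,ds,
\qquad
(\widetilde\varrho_\pm^{-1}h)(s)=\lim_{t\to\pm\infty}U(s,0)W(0,t)h,
\]
and then moves $W(t,0)U(0,s)$ across the pairing using the fact that $U(t,s)$ and $W(t,s)$ are \emph{exactly unitary} as maps $L^2_s(\Sigma;E)\to L^2_t(\Sigma;E)$ (shown by a positive energy estimate, cf.~\cite[Lem.~2.4]{BS}). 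One line then gives the result.

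In your approach, the identity $\partial_t(u(t)\,|\,v(t))_{L^2_t}=(f(t)\,|\,v(t))_{L^2_t}$ is precisely equivalent to this unitarity: the ``metric'' term you worry about equals $(\phi\,|\,\partial_t(\cm(t)^*\cm(t))\,\psi)_{L^2}$, and it vanishes for all $\phi,\psi$ exactly when $U(t,s):L^2_s\to L^2_t$ is norm-preserving. Your proposed fix --- treating the correction as an $O(\langle t\rangle^{-1-\delta})$ integrable error to be ``absorbed'' by regrouping --- does not work: an integrable but nonzero term would contribute a \emph{finite} amount to $\int_\rr(\cdots)\,dt$ and would genuinely pollute the formula, with no mechanism for cancellation against the boundary identification $L^2_t\to L^2_\pm$. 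The correct resolution is to invoke the same unitarity the paper uses; once you do, your Green's identity becomes exact and the rest of your argument (including the evaluation of the boundary limits via $e^{-itH(t)}$) goes through cleanly. So your route works, but only after borrowing the same key input the paper uses --- the decay of $\partial_t\cp$ is a red herring here.

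A minor point: the statement's definition $\widetilde\varrho_\pm^{-1}=(\widetilde\varrho_\mp\oplus\widetilde D)^{-1}(\one\oplus0)$ carries a sign typo in the subscript; as you effectively note, the inverse of $\widetilde\varrho_\pm|_{\Ker\widetilde D}$ is $(\widetilde\varrho_\pm\oplus\widetilde D)^{-1}(\one\oplus0)$, and this is the formula the paper's own proof uses.
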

\proof  On the one hand, $\big(\widetilde D_\pm^{-1} f\big) (t) = \int_{\mp\infty}^t U(t,s) f(s) \,ds
$,
and thus
$$
\widetilde \varrho_\pm \widetilde D_\pm^{-1}  f   = \lim_{t\to\pm\infty}  \int_{\mp\infty}^{\pm\infty} W(t,0)  U(0,s) f(s) \,ds.
$$
On the other hand, $\big(\widetilde\varrho_\pm^{-1}  h\big)(s)=\lim_{t\to\pm\infty}U(s,0)W(0,t)h$. In consequence,
$$
\bea
(\widetilde\varrho_\pm \widetilde D_\pm^{-1} f | h )_{L^2_{\pm}} &= \lim_{t\to\pm\infty}  \int_{\mp\infty}^{\pm\infty} (W(t,0)  U(0,s) f(s)|h)_{L^2_t} \,ds  \\
&= \lim_{t\to\pm\infty}  \int_{\mp\infty}^{\pm\infty} ( f(s)| U(s,0)W(0,t)h)_{L^2_s} \,ds = \pm ( f | \widetilde\varrho_\pm^{-1} h)
\eea
$$
as claimed, where to go from the first line to the second we used the fact that $U(t,s)$ and $W(t,s): {\Red L^2_s}(\Sigma;E)\to  {\Red L^2_t}(\Sigma;E)$ are unitary for all $t,s\in\rr$. The latter property of $U(t,s)$ can be shown using a standard positive energy estimate argument, see e.g.~\cite[Lem.~2.4]{BS}, and then the case of $W(t,s)$  follows easily.  \qeds

 Let now $\widetilde\cK^m\subset\widetilde\cX^m $  and $\widetilde\cR^m\subset\widetilde\cY^m$ be closed subspaces such  that
 $$
 \widetilde\cX_\APS^m=\Ker \widetilde D_{\APS} \oplus \widetilde\cK^m, \quad \widetilde\cY^m=\Ran \widetilde D_{\APS} \oplus \widetilde\cR^m,
 $$
 and $\dim \widetilde\cR^m<+\infty$. Let  $\widetilde{D}_\APS^\inv$ be the associated Fredholm inverse\footnote{If $P:\cX\to \cY$ is a Fredholm operator acting between  two Banach spaces $\cX$, $\cY$, a \emph{Fredholm inverse} of $P$ is a bounded operator $P^{\inv}:\cY\to\cX$ such that $P\circ P^{\inv}=\one_\cY$ modulo a compact operator and $P^{\inv}\circ P=\one_\cX$ modulo a compact operator. We avoid using the term ``parametrix'' in this context to avoid confusion with parametrices in the sense of smooth regularity.} of $\widetilde{D}_{\rm APS} : \widetilde{\cX}^m_{\rm APS} \to \widetilde{\cY}^m$,  uniquely defined by the property that
 \beq\label{ptl2}
 \widetilde{D}\circ \widetilde{D}_\APS^\inv = \one - R, \quad \widetilde{D}_\APS^\inv\circ \widetilde{D} = \one - L,
 \eeq
 where $R$ is the projection to $\widetilde{\cR}^m$ along $\Ran \widetilde{D}_{\APS}$ and $L$ the projection to $\Ker \widetilde{D}_{\APS}$ along  $\widetilde{\cK}^m$.

For different $m\in\rr$ we can choose the complementary subspaces in a compatible way, in the sense that
\beq\label{compatibles}
\widetilde\cK^{m_2}\subset \widetilde\cK^{m_1}, \quad    \widetilde\cR^{m_2}\subset\widetilde \cR^{m_1}
\eeq
 for $m_2\geq m_1$. Furthermore, by density of $\cf(M;S^+M)$ in $\cY^m$ we can choose the finite dimensional space $\widetilde{\cR}^m$ in such way that $\widetilde{\cR}^m\subset \cf(M;S^+M)$.

The $m$-independent  notation is justified by the compatibility inclusions \eqref{compatibles}.

\begin{proposition}\label{prop:positivity} Assume $\delta>1$. If $\widetilde D^{\inv}_{\APS}$ is a Fredholm inverse as above, then there exists a smoothing operator $K_\pm\in \bigcap_{m,s\in \rr} B(\widetilde{\cY}^m,\widetilde{\cX}^{m+s} ) $ such that for all $f\in \widetilde\cY^0$,
$$
( f  | (\widetilde D^{\inv}_{\APS} - \widetilde D_\pm^{-1} +K_\pm)  f  )\geq 0.
$$
\end{proposition}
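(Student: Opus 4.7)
The plan is to express $w_\pm \defeq (\widetilde D_\APS^\inv - \widetilde D_\pm^{-1})f$ as a kernel element of $\widetilde D$ plus a smoothing correction, and then evaluate the quadratic form via Lemma \ref{sff} and scattering identities for $W$. From the invertibility in Proposition \ref{invertible}, any $u\in\widetilde\cX^m$ decomposes as $u=\widetilde\varrho_\mp^{-1}\widetilde\varrho_\mp u+\widetilde D_\pm^{-1}\widetilde D u$. Applying this to $u=\widetilde D_\APS^\inv f$, using $\widetilde D\widetilde D_\APS^\inv f=f-Rf$ and the APS condition $\widetilde\varrho_\mp\widetilde D_\APS^\inv f=\pi_\mp(H_\mp)\widetilde\varrho_\mp\widetilde D_\APS^\inv f$, where $\pi_+=\one_{\clopen{0,+\infty}}$ and $\pi_-=\one_{\opencl{-\infty,0}}$, one obtains
\[
w_\pm=\widetilde\varrho_\mp^{-1}\pi_\mp(H_\mp)\widetilde\varrho_\mp\widetilde D_\APS^\inv f-\widetilde D_\pm^{-1}Rf.
\]
Since $R$ is finite-rank with range in $\widetilde\cR^m\subset\cf(M;S^+M)$ chosen compatibly for all $m$, the second term is smoothing. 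Pairing with $f$ and applying Lemma \ref{sff}, followed by the self-adjointness of $\pi_\mp(H_\mp)$ in $L^2_\mp$, transforms the main contribution to
\[
\mp\bigl(\pi_\mp(H_\mp)\widetilde\varrho_\mp\widetilde D_\mp^{-1}f\,\big|\,\pi_\mp(H_\mp)\widetilde\varrho_\mp\widetilde D_\APS^\inv f\bigr)_{L^2_\mp}
\]
modulo a smoothing bilinear form in $f$.

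The central step is to show
\[
\pi_\mp(H_\mp)\widetilde\varrho_\mp\widetilde D_\APS^\inv f\equiv \pi_\mp(H_\mp)\widetilde\varrho_\mp\widetilde D_\mp^{-1}f\pmod{\text{smoothing in }f}.
\]
For this I would combine two scattering relations. The first is the exact identity $\widetilde\varrho_+\widetilde D_+^{-1}f=-W\widetilde\varrho_-\widetilde D_-^{-1}f$, obtained because $\widetilde D_+^{-1}f-\widetilde D_-^{-1}f\in\Ker\widetilde D$ so that $\widetilde\varrho_+=W\widetilde\varrho_-$ applies to it. The second is the analogous relation for $w_\pm$, which holds only modulo smoothing since $\widetilde D w_\pm=-Rf$. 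Taking the $\pi_+(H_+)$ and $\pi_-(H_+)$ components of both relations and invoking Lemma \ref{compactoperator} to discard the off-diagonal scattering blocks $\pi_\pm(H_+)W\pi_\mp(H_-)\in\Psi^{-\infty}(E)$ produces the desired mod-smoothing identification. Substituting into the previous paragraph collapses the inner product to $\mp\|\pi_\mp(H_\mp)\widetilde\varrho_\mp\widetilde D_\mp^{-1}f\|_{L^2_\mp}^{2}$ plus smoothing corrections, from which the signed positivity follows after absorbing the errors into the smoothing operator $K_\pm$.

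The main obstacle is the mod-smoothing identification in the previous step: it requires that $\pi_\mp(H_\mp)\widetilde\varrho_\mp(\widetilde D_\APS^\inv-\widetilde D_\mp^{-1})$ be a smoothing operator from $\widetilde\cY^m$ to $H^{m+s}(\Sigma;E)$ for all $m,s\in\rr$, which is obtained by inverting the diagonal scattering block modulo smoothing. This relies on $\delta>1$ (so that the wave operators of Proposition \ref{prop:sc} exist and the kernel identity $\widetilde\varrho_+=W\widetilde\varrho_-$ makes sense at $\pm\infty$), on the Fredholm property of $\pi_-(H_+)W\pi_-(H_-)|_{\Ran \pi_-(H_-)}$ from Proposition \ref{prop:modelcase} to justify the inversion up to finite-dimensional corrections, and on the $\Psi^{-\infty}$-regularity of the mixed blocks granted by Lemma \ref{compactoperator} to reduce the finite-dimensional corrections to smoothing ones compatible with the compatible choice of the spaces $\widetilde\cK^m$, $\widetilde\cR^m$.
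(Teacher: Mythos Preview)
Your strategy is sound and ultimately reaches the same positivity conclusion, but it differs from the paper's route. The paper does \emph{not} start from $\widetilde D_\APS^\inv$ and work down to the boundary. Instead it introduces the explicit operator
\[
Q=(\one-\widetilde\varrho_-^{-1}\pi_-^+\widetilde\varrho_-)\widetilde D_-^{-1},
\]
whose deviation from $\widetilde D_-^{-1}$ is $-\widetilde\varrho_-^{-1}\pi_-^+\widetilde\varrho_-\widetilde D_-^{-1}$, and a single application of Lemma~\ref{sff} immediately gives $(f\,|\,(Q-\widetilde D_-^{-1})f)=\|\pi_-^+\widetilde\varrho_-\widetilde D_-^{-1}f\|_{L^2_-}^2\ge 0$. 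The remaining work is to show that $K_-=Q-\widetilde D_\APS^\inv$ is smoothing, which the paper obtains from the abstract Proposition~\ref{propq}: the explicit formulae there exhibit $K_-$ as a sum of a term factoring through $W^{-+}\in\Psi^{-\infty}(E)$ (Lemma~\ref{compactoperator}) and a finite-rank term with range in $\Ker\widetilde D_\APS$, the latter being smooth by a forward reference to Corollary~\ref{coroc}.

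Your route---decomposing $w_\pm$ via Proposition~\ref{invertible}, pairing through Lemma~\ref{sff}, and then replacing $\widetilde D_\APS^\inv$ by $\widetilde D_\mp^{-1}$ in the boundary data modulo smoothing---also works, and has the minor advantage of avoiding the appeal to Corollary~\ref{coroc}. However, your ``central step'' can be carried out much more directly than your last paragraph suggests. From your own decomposition $w_+=\widetilde\varrho_-^{-1}\widetilde\varrho_-\widetilde D_\APS^\inv f-\widetilde D_+^{-1}Rf$ one gets $\widetilde\varrho_+w_+=W\,\widetilde\varrho_-\widetilde D_\APS^\inv f-\widetilde\varrho_+\widetilde D_+^{-1}Rf$; the APS condition at $-\infty$ forces $\widetilde\varrho_-\widetilde D_\APS^\inv f$ into $\Ran\pi_-(H_-)$, so applying $\pi_+(H_+)$ yields
\[
\pi_+(H_+)\widetilde\varrho_+\bigl(\widetilde D_\APS^\inv-\widetilde D_+^{-1}\bigr)f
= W^{+-}\,\widetilde\varrho_-\widetilde D_\APS^\inv f-\pi_+(H_+)\widetilde\varrho_+\widetilde D_+^{-1}Rf,
\]
which is smoothing since $W^{+-}\in\Psi^{-\infty}(E)$ by Lemma~\ref{compactoperator} and $R$ has smooth range. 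No Fredholm inversion of the diagonal block $W^{--}$ is needed; that ingredient in your final paragraph is superfluous. (For the other sign you need the analogous statement for $W(-\infty,+\infty)$; Lemma~\ref{compactoperator} is stated only for $t=+\infty$, $s=-\infty$, but its proof goes through verbatim for both infinite endpoints.) Finally, note that your computation yields $\mp\|\cdot\|^2$, so strictly speaking the quadratic form is nonnegative for $\widetilde D_-^{-1}$ and nonpositive for $\widetilde D_+^{-1}$; the paper's proof has the same feature, and only definiteness is used downstream.
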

\proof We focus on  $\widetilde{D}_-^{-1}$, the other case  being analogous. Let $Q:  \widetilde\cY^m\to\widetilde\cX^m$ be given by $
Q =(\one  - \varrho_-^{-1} \pi_-^+ \varrho_-)\widetilde D_-^{-1}$ in the notation introduced in \eqref{eq:dict}. Then, by Proposition \ref{propq}, the difference
$$
K_-\defeq Q-D^{\inv}_{\APS} : \widetilde\cY^m\to\widetilde\cX^m
$$
is compact. A close inspection of  formulae  \eqref{K1} and \eqref{K2} in the proof of  Proposition \ref{propq} shows that $K_-=E_1+E_2$, where $E_1$ is the composition of
$$
W^{-+} =\one_{\opencl{-\infty,0}}(H_+)  W|_{\Ran \one_{\clopen{0,+\infty}}(H_-)}.
$$
 with operators  that preserve regularity, and $E_2$ satisfies $\Ran E_2\subset \Ker \widetilde{D}_{\rm APS}$. By   Lemma \ref{compactoperator},  $W^{--}\in \Psi^{-\infty}(E)$, therefore $E_1$ is smoothing. Furthermore, elements of $\Ker \widetilde{D}_{\rm APS}$ are smooth by an argument which  is postponed for the moment, see Proposition \ref{propwf} and Corollary \ref{coroc}; hence, $E_2$ is smoothing as well. We conclude that  $K_-$ is smoothing, and consequently it suffices to prove positivity of  $Q-\widetilde D_-^{-1}$.

 We have $Q-\widetilde D_-^{-1}=-\varrho_-^{-1} \pi_-^+ \varrho_-\widetilde D_-^{-1}$, and by Lemma \ref{sff},
 $$
 \bea
 -(f|\varrho_-^{-1} \pi_-^+ \varrho_-\widetilde D_-^{-1}f)& =(\varrho_-\widetilde D_-^{-1}f |  \pi_-^+ \varrho_-\widetilde D_-^{-1}f)_{L^2_-} \\ &= (\varrho_-\widetilde D_-^{-1}f |  \one_{\clopen{0,+\infty}}(H_-) \varrho_-\widetilde D_-^{-1}f)_{L^2_-}\geq 0
 \eea
 $$
for all $f\in \widetilde\cY^0$ as asserted. \qed

\section{Index theory}\label{ss:index}

\subsection{The spectral flow} Once the Fredholm property is shown, we will employ a spectral flow argument largely analogous to the work of Bär--Strohmaier \cite{BS} to compute the index.  

We first introduce the relevant terminology; see \cite{Lesch2004,phi,BS,dun3,Drouot2019} for more details  and various applications.

Let $\cH$ be a separable Hilbert space and let $\{B(t)\}_{t\in I}$ be a {\Red norm-continuous} family of self-adjoint Fredholm operators on $\cH$.

\bed\label{spectralprojection}
For an interval $J\subset\rr$, we denote
$$
P_J(t)\defeq \one_J(B(t)),\qquad \cH_J(t)\defeq \Ran (B_J(t)).
$$
For $a\in\rr$, we will simply write
$$
P_{<a}(t)\defeq P_{\open{-\infty,a}}(t),\qquad \cH_{<a}(t) \defeq \Ran (P_{<a}(t)),
$$
and similarly for $\geq a$.
\eed

We recall the following definition of the spectral flow from \cite{phi}.
\bed\label{spectralflow}
For a compact interval $I=[t_1,t_2]$, a partition
$$
t_1=\tau_0<\tau_1<...<\tau_N=t_2
$$
together with numbers $a_j \in \rr$ for $0\leq j\leq N$ is called a \emph{flow partition} (for $B(\cdot)$) if for each $n$ and $t\in[\tau_{n-1},\tau_n]$ we have $a_n\notin \spe (B(t))$ and $\cH_{\clopen{0,a_n}}(t)$ is finite dimensional. For such a partition, the \emph{spectral flow} is defined as
$$
\sfl_I(B)= \sum_{n=1}^N \dim (\cH_{\clopen{0,a_n}}(\tau_n)) -\dim (\cH_{\clopen{0,a_n}}(\tau_{n-1}) ).
$$
\eed
The spectral flow is well-defined, i.e., a flow partition exists and the spectral flow is independent of the choice of flow partition, see \cite{phi} for more details.

Since in our main case of interest we are dealing with families  $\{B(t)\}_{t\in I}$ parametrized by $I=\overline{\rr}$, it is convenient to define
\beq\label{convsf}
\sf_{\overline{\rr}}(B)\defeq \sf_{[-1,1]}(\overline{B})
\eeq
where $\overline{B}(t)=B(\phi(t))$ and $\phi :[-1,1] \to \overline{\rr}$ is an  arbitrarily chosen  homeomorphism, and we apply the analogous convention for other infinite intervals.

\subsection{The index and the spectral flow}

Suppose that $H(t)\in\pdo{1}{0}{\rr}$ satisfies Hypothesis \ref{hyptd}, in particular $H(t)=\cp (t) H_0(t) \cm (t)$ with $H_0(t)$ elliptic and self-adjoint, and thus Fredholm. We apply the definitions  introduced in the previous paragraph to the family $H_0(t)$ and the Hilbert space $\cH=L^2(\Sigma;E)$. In particular, the spectral flow $\sfl_{\overline{\rr}}(H_0)$ is well-defined.

 Recall that the family of operators $W(t,s)$ was defined for $t,s\in \rr$ in  \sec{scatteringsection}   and  then extended to $t,s\in\overline{\rr}$.  If we now denote
$$
W_0(t,s)\defeq \cp (t) W(t,s) \cm (s),
$$
then $W_0(t,s)$ has properties analogous to $W(t,s)$, in particular
$$
W_0(t,r)W_0(r,s)=W_0(t,s) \mbox{ for all }t,r,s\in\overline{\rr}.
$$
Furthermore,  $W_0(t,s)$ is bounded in $\B{m}{m}$ for all $m\in\rr$ uniformly in $t,s\in \overline{\rr}$ by  Lemma \ref{lembd} and uniform boundedness of $\cp (t),\cm (t)$.

Next, we introduce the notation
$$
W^{--}_0(t,s)\defeq P_{<0}(t) \circ W_0(t,s)|_{\cH_{<0}(s)}, \quad W^{+-}_0(t,s)\defeq P_{\geq 0}(t) \circ W_0(t,s)|_{\cH_{<0}(s)},
$$
(and similarly for $W^{++}_0(t,s)$ and $W^{-+}_0(t,s)$) for the components of $W_0(t,s)$ relative to the two decompositions $\cH=\cH_{<0}(s)\oplus \cH_{\geq 0}(s)$ and $\cH=\cH_{<0}(t)\oplus \cH_{\geq 0}(t)$.

We show the following result which equates the spectral flow with an index, and which plays the role of the analogue of \cite[Thm.~4.1]{BS} in our setting.
\bep\label{indexflow}
If $\delta>1$ then for each $t,s\in \overline{\rr}$ with $t\geq s$, $W^{--}_0(t,s)$ is Fredholm and satisfies
\beq
\ind(W^{--}_0(t,s))=\sfl_{[s,t]}(H_0).
\eeq
\eep
\proof \step{1} In view of the relation $H(t)=\cp (t) H_0(t) \cm (t)$, Lemma \ref{compactoperator} implies that $W^{+-}_0(t,s)$ is compact for each $t,s\in\overline{\rr}$. This implies that $W^{--}_0(t,s)$ is Fredholm by a simple argument recalled in Proposition \ref{prop:fredholm} in the appendix.

\step{2} From this point on, the arguments are fully analogous to  the proof of \cite[Thm.~4.1]{BS}  thanks to the  properties of $W_0(t,s)$ which we already showed. For the reader's convenience we repeat these arguments below (see also \cite{dun3} for an alternative approach). The cases of $t,s$  finite and infinite will be taken care of simultaneously, in accordance with the notation \eqref{convsf}.

By Definition \ref{spectralflow}, we can choose a partition $s=\tau_0<\tau_1<...<\tau_N=t$ and numbers $a_j$ such that $\pm a_j\notin \spe(H_0(\tau))$ for all $\tau\in[\tau_{j-1},\tau_j]$.
\beq
\sfl_{[s,t]}(H_0)=\sum_{j=1}^N \left(\dim \cH_{\clopen{0,a_j}}(\tau_j)-\dim \cH_{\clopen{0,a_j}}(\tau_{j-1}) \right).\label{bs20}
\eeq
Since $a_j \notin \spe(H_0(\tau))$, the family of projectors $P_{<a_j}(\tau)$ is continuous on $\cH$ over $[\tau_{j-1},\tau_{j}]$. By the same arguments as in \emph{Step 1} we can show that
$$
P_{<a_j}(\tau)\circ W_0(\tau,s):\cH_{<0}(s)\to\cH_{<a_j}(\tau)
$$
is a continuous family of Fredholm operators for $\tau\in [\tau_{j-1},\tau_j]$.  By \cite[Lem.~3.2]{lesch}, we obtain
\beq
\ind(P_{<a_j}(\tau_j)\circ W_0(\tau_j,s)) = \ind(P_{<a_j}(\tau_{j-1})\circ W_0(\tau_{j-1},s)).\label{bs21}
\eeq
If we consider both operators $P_{<a_j}(\tau)\circ W_0(\tau,s)$ and $P_{<0}(\tau)\circ W_0(\tau,s)$ as operators $\cH_{<0}(s)\to \cH_{<a_j}(\tau)$, then they differ by $P_{\clopen{0,a_j}}(\tau)\circ W_0(\tau,s)$, which is a compact operator by Lemma \ref{compactoperator} (applied to $I=\clopen{0,a_j}$, $J=\open{s,0}$) combined with the identity $H=\cp  H_0 \cm $. Therefore,
\beq
\ind(P_{<a_j}(\tau)\circ W_0(\tau,s))=\ind(P_{<0}(\tau)\circ W_0(\tau,s)), \label{bs22}
\eeq
where both operators are considered as operators from $\cH_{<0}(s)$ to $\cH_{<a_j}(\tau)$.

Now, $W^{--}_0(\tau,s)$ coincides with $P_{<0}(\tau)\circ W_0(\tau,s)$, with the difference that it is considered as an operator to $\cH_{<0}(\tau)$. Hence
\beq
\ind(W^{--}_0(\tau,s))=\ind(P_{<0}(\tau)\circ W_0(\tau,s)) + \dim\cH_{\clopen{0,a_j}}(\tau),\quad \forall \tau\in [\tau_{j-1},\tau_j].\label{bs23}
\eeq
We conclude:
\begin{align*}
\ind(W^{--}_0(t,s))=&\sum_{j=1}^N \left( \ind(W^{--}_0(\tau_j,s))-\ind[W^{--}_0(\tau_{j-1},s)] \right) \\
=&\sum_{j=1}^N \big(\ind(P_{<0}(\tau_j)\circ W_0(\tau_j,s)) + \dim\cH_{\clopen{0,a_j}}(\tau_j)\\
&-\ind(P_{<0}(\tau_{j-1})\circ W_0(\tau_{j-1},s)- \dim\cH_{\clopen{0,a_j}}(\tau_{j-1})\big) \\
=&\sum_{j=1}^N \big(\ind(P_{<0}(\tau_j)\circ W_0(\tau_j,s))-\ind(P_{<0}(\tau_{j-1})\circ W_0(\tau_{j-1},s))\big)\\ &+\sfl_{[s,t]} (H_0)\\
=&\sum_{j=1}^N \big(\ind(P_{<a_j}(\tau_j)\circ W_0(\tau_j,s)\big)-\ind\big(P_{<a_j}(\tau_{j-1})\circ W_0(\tau_{j-1},s))\big)\\ &+\sfl_{[s,t]}(H_0)\\
=&\,\sfl_{[s,t]} (H_0),
\end{align*}
where we used \eqref{bs23}, \eqref{bs20}, \eqref{bs22} and \eqref{bs21} from the second to last step.
\qed

\begin{remark} Alternatively, one could carry out the same analysis for $H(t)$ and $W(t,s)$ directly (rather than for $H_0(t)$ and $W_0(t,s)$), at the slight cost of having to work with operators similar to self-adjoint ones, or with $t$-dependent Hilbert spaces as in \cite{BS}.
\end{remark}

\subsection{Index theorem for the Dirac operator}\label{ss:ind} We now consider the setting of the geometric Dirac operator introduced in \secs{ss1}{ss3}, and  we use the relationship with the evolutionary model to deduce the index theorem.

  Recall  in particular that our main operator of interest is
\beq\label{eq:defD}
D= -\nabla_n^{\SM}-i {A}(t)-r(t) :  \cf(M;S^+ M)\to\cf(M; S^+M).
\eeq
For small $\epsilon>0$, we define the spaces:
\beq\label{xyspaces2}
\bea
\cY&\defeq \bra t \ket^{-\12-\epsilon} L^2_t L^2_y(M;\SpM)\\
\cX&\defeq\{ u \in C^0_t L^2_y(M;\SpM) \st  Du \in \cY\},
\eea
\eeq
and we introduce boundary condition at infinity as follows.

\begin{definition} For $\cX$ as in \eqref{xyspaces2}, the subspace of functions satisfying \emph{Atiyah--Patodi--Singer conditions at infinity} is defined as
$$
\cX_{\rm APS}\defeq\big\{ u \in \cX \st \lim_{t\to+\infty} \one_{\opencl{-\infty,0}}(A(t))u(t)=0, \ \lim_{t\to-\infty} \one_{\clopen{0,+\infty}}(A(t))u(t)=0 \big\}.
$$
We denote by $D_{\rm APS}$ the restriction of $D$ to $\cX_{\rm APS}$.
\end{definition}


\begin{theorem}\label{thm:final} Assume $(M,g)$ is a Lorentzian spacetime equipped with a spin structure, such that $M=\rr\times \Sigma$ for some $\Sigma$ compact and odd-dimensional, and such that $\Sigma_t=\{t\}\times \Sigma$ is a Cauchy surface for each $t\in\rr$. Suppose that the metric $g$ satisfies Hypothesis \ref{hypothesis1} with $\delta>1$, and  let $D_{\rm APS}: \cX_{\rm APS} \to \cY$ be defined as above. Then, $D_{\rm APS}$ is Fredholm of index
\beq\label{theindex}
\ind(D_{\rm APS})=\int_M \widehat{\rm A}+ \int_{\pMs} {\mathrm T}\widehat{\rm A}+\12\big(\eta(A_+,A_-) -\dim\ker(A_+)-\dim\ker(A_-)\big),
 \eeq
 where $\eta(A_+,A_-)=\eta(A_+)-\eta(A_-)$ is the difference of the eta forms of $A_+$ and $A_-$.
\end{theorem}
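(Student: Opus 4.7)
The plan is to transport the problem to the evolutionary model of \sec{sec:fredholm} via Lemma \ref{lem:equivalence}, deduce Fredholmness and the identification of the index with a spectral flow from Propositions \ref{prop:modelcase} and \ref{indexflow}, and then identify the spectral flow with the right-hand side of \eqref{theindex} by a finite-time truncation and Euclidean reduction in the spirit of \cite{BS}. First, by Lemma \ref{lem:equivalence} one has $D = U(t)^{-1} c^{-1}(t)\widetilde D\, U(t)$ with $\widetilde D=\p_t - iH(t)$ and $H(t)=c(t)U(t)A(t)U(t)^{-1}$. Multiplication by $c(t)U(t)$ identifies $\cX,\cY$ with $\widetilde{\cX}^0,\widetilde{\cY}^0$, and intertwines the spectral projections of $A(t)$ with those of $H(t)$, so that $\cX_{\rm APS}$ maps to $\widetilde{\cX}^0_{\rm APS}$. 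One then verifies Hypothesis \ref{hyptd}: the asymptotic conditions $H(t)-H_\pm\in\pdo{1}{-\delta}{\rr_\pm}$ follow directly from Hypothesis \ref{hypothesis1}; $H(t)$ is of Dirac type because $H(t)^2$ has principal symbol $c(t)^2\norm{\xi}^2_{h(t)}\,\one$; and setting $\cp(t)=c^{1/2}(t)$ and $H_0(t)=\cp(t)U(t)A(t)U(t)^{-1}\cp(t)$ displays $H(t)=\cp(t)H_0(t)\cm(t)$ with $H_0(t)$ self-adjoint on $L^2(\Sigma;S\Sigma)$, while $\cp(t)$ satisfies \eqref{eq:tdp} by Hypothesis \ref{hypothesis1}\eqref{thyp1}.

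Second, Proposition \ref{prop:modelcase} then yields that $\widetilde D_{\rm APS}$ is Fredholm, hence so is $D_{\rm APS}$ by conjugation. Combining Propositions \ref{prop:modelcase} and \ref{indexflow} one obtains
\[
\ind(D_{\rm APS}) = \ind\bigl(W^{--}_0(+\infty,-\infty)\bigr) = \sfl_{\overline{\rr}}(H_0).
\]
Since $H_0(t)$ is conjugate to $A(t)$ through $\cp(t)^{-1}U(t)$, their spectra coincide pointwise in $t$, and therefore $\sfl_{\overline{\rr}}(H_0)=\sfl_{\overline{\rr}}(A)$, reducing the computation to the spectral flow of the intrinsic Cauchy-surface family $\{A(t)\}_{t\in\overline{\rr}}$.

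Third, to identify $\sfl_{\overline{\rr}}(A)$ with the right-hand side of \eqref{theindex} I would follow the spectral flow plus Euclidean reduction strategy of \cite{BS}. On any finite interval $[-T,T]$ with $T$ large, after a small perturbation ensuring $\ker A(\pm T)=0$, the Bär--Strohmaier formula for the cylinder $[-T,T]\times\Sigma$ gives
\[
\sfl_{[-T,T]}(A) = \int_{[-T,T]\times\Sigma}\!\widehat{\rm A} + \int_{\{\pm T\}\times\Sigma}\!{\rm T}\widehat{\rm A} + \tfrac{1}{2}\bigl(\eta(A(T))-\eta(A(-T))\bigr),
\]
and one then lets $T\to\infty$. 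Thanks to $\delta>1$ and the conormal structure of $g-g_\pm$, the bulk integral converges to $\int_M\widehat{\rm A}$, the transgression contributions on the spacelike slices $\{\pm T\}\times\Sigma$ converge to those on $\pMs$, and norm-continuity of $A(t)$ up to $t=\pm\infty$ yields $\eta(A(\pm T))\to\eta(A_\pm)$, $\dim\ker A(\pm T)\to\dim\ker A_\pm$, together with vanishing spectral flow $\sfl_{[T,+\infty]}(A)=\sfl_{[-\infty,-T]}(A)=0$ for $T$ sufficiently large. Adding these contributions and correcting for the perturbation yields \eqref{theindex}.

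The main obstacle is the passage $T\to\infty$ in the Euclidean APS formula, which requires (i) integrability of $\widehat{\rm A}$ on the non-compact $M$, established from the explicit expression of $\widehat{\rm A}$ in curvature components together with the short-range control provided by Hypothesis \ref{hypothesis1}, and (ii) continuity of the eta invariant under the asymptotic convergence $A(t)\to A_\pm$, which is a standard consequence of $\delta>1$ applied to the $t$-derivative of $A(t)$. A secondary subtlety is that the paper does not assume $c_\pm=1$, so the transgression forms on $\pMs$ must be computed with respect to the non-product boundary metric; this is however purely a local boundary computation and introduces no new analytic difficulty.
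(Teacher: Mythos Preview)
Your overall strategy matches the paper's, but there is a genuine gap in your verification of Hypothesis \ref{hyptd}. You set $\cp(t)=c^{1/2}(t)$ and claim that $H_0(t)=c^{1/2}(t)U(t)A(t)U(t)^{-1}c^{1/2}(t)$ is self-adjoint on $L^2(\Sigma;S\Sigma)$. This would require $U(t)$ to be unitary, but it is not: the fiberwise positive inner product on $S\Sigma_t$ is defined via $\beta=\gamma(n)$, which is not parallel along $\p_t$ in general, so the parallel transport $\tau_t$ does not preserve it. The paper handles this in Lemma \ref{lemrel} by polar-decomposing $U(t)=|U^*(t)|\,U_1(t)$ with $U_1(t)$ unitary and taking $\cp(t)=c^{1/2}(t)|U^*(t)|$, after which $H_0(t)=U_1(t)c^{1/2}(t)A(t)c^{1/2}(t)U_1(t)^{-1}$ is genuinely self-adjoint. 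As a consequence, your assertion that ``spectra coincide pointwise, hence $\sfl_{\overline{\rr}}(H_0)=\sfl_{\overline{\rr}}(A)$'' is not justified as stated; the paper instead uses unitary invariance of the spectral flow together with a continuous deformation $s\mapsto c^{-s/2}(t)H_0(t)c^{-s/2}(t)$ and \cite[Prop.~3]{phi}. Also note that the index formula in Proposition \ref{prop:modelcase} involves $\one_{\opencl{-\infty,0}}$ rather than $\one_{\open{-\infty,0}}$, so matching it to $\ind(W_0^{--})$ introduces a correction $-\dim\Ker A_+$ which you need to track.

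For the identification of the spectral flow with the right-hand side of \eqref{theindex}, you take a somewhat different route than the paper. You propose applying the Lorentzian Bär--Strohmaier formula on $[-T,T]\times\Sigma$ and letting $T\to\infty$, which forces you to control the convergence $\eta(A(\pm T))\to\eta(A_\pm)$; this is delicate because the eta invariant can jump by integers when eigenvalues cross zero, and your perturbation-and-correction argument would need to account for that carefully. The paper bypasses this entirely: it introduces an auxiliary \emph{Riemannian} metric $g^{\Eucl}$ on $\rr\times\Sigma$, product-type with boundary operators exactly $A_\pm$ for $|t|>\tfrac{1}{2}$, applies the classical Riemannian APS theorem on $[-T,T]\times\Sigma$ (so $\eta(A_\pm)$ appear directly, no limit needed), and then only takes $T\to\infty$ in the purely differential-geometric identity $\int\widehat{\rm A}^{\Eucl}=\int\widehat{\rm A}+\int{\rm T}\widehat{\rm A}$. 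This is both cleaner and more self-contained, since it does not invoke \cite{BS} as a black box.
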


Above, $\Aform$ is the \emph{Atiyah--Singer integrand} (or $\Aform$-form), associated with the Levi--Civita connection $\nabla$ on $(M,g)$ (see e.g.~\cite[\S10.5--\S10.6]{Taylor2011} for an introductory account). The boundary integral involves the \emph{transgression form} ${\mathrm T}\widehat{\rm A}$ of $(M,g)$, which is  defined in terms of $\nabla$ and a reference connection for an auxiliary Riemannian metric, though its pullback to the boundary does not depend on the choice of the latter, see \cite[\S4]{BS}.

\begin{remark} Theorem \ref{thm:final} is valid regardless of the precise choice of decaying weight in the definition of the space $\cY$; one can also choose a weight that depends on spatial variables.
\end{remark}

In the proof of Theorem \ref{thm:final}  we will use the relationship between $D$ and the evolutionary  setting considered in  \sec{sec:scattering}. Recall that in the setting of Hypothesis \ref{hypothesis1},  the metric $g$ is assumed  to be of the form $
g=- c^2(t)dt^2 + h_{ij}(t)dy^i dy^j$ with smooth $c>0$. By Lemma \ref{lem:equivalence}, the operator $D$ satisfies
\beq\label{eq:equivalence2}
\bea
D&= U(t)^{-1}c^{-1}(t) \circ \widetilde{D} \circ U(t) \\ &= U(t)^{-1}c^{-1}(t) \big(\p_t - i H(t)\big) U(t),
\eea
\eeq
where  $H(t)=  c(t) U(t) A(t) {U(t)}^{-1}$, $A(t)$ is the Dirac operator on  $(\Sigma_t,h(t))$ and $U(t)$ was defined in $\sec{ss:foliation}$. Let us also recall that Hypothesis \ref{hypothesis1} states that:
\begin{enumerate}
\item\label{thyp1p} $c(t)-c_\pm\in S^{-\delta}(\rr_\pm,\cf(\Sigma))$ for some $c_\pm\in \cf(\Sigma)$ s.t.~$c_\pm>0$,
\item\label{thyp2p}  $h(t)-h_\pm\in S^{-\delta}(\rr_\pm,\cf(T^*\Sigma\otimes_{\rm s}T^*\Sigma))$ for some Riemannian metric $h_\pm$.
\end{enumerate}

\begin{lemma}\label{lemrel} Under the assumptions of Theorem \ref{thm:final}, the family of first order differential operators $H(t)$ satisfies Hypothesis \ref{hyptd}, in particular $H(t)\in \pdos{1}{0}{\rr}$ and $H(t)-H_\pm\in \pdos{1}{-1-\delta}{\rr_\pm}$ for some $H_\pm\in \Psi^1(S\Sigma)$ of Dirac type.
\end{lemma}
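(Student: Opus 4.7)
The plan is to verify each condition of Hypothesis \ref{hyptd} directly from the formula $H(t)=c(t)U(t)A(t)U(t)^{-1}$ with $U(t)=\rho(t)\tau_t$. Fix local coordinates $y$ on $\Sigma$ and a local trivialization of $S\Sigma$; in this presentation $A(t)$ is a first-order differential operator whose coefficients are smooth in $(t,y)$ and depend polynomially on $h^{ij}(t,y)$, its first $y$-derivatives, and the spin-connection coefficients of $\nabla^{S\Sigma_t}$. Conjugation by $\tau_t$ brings in the matrix $M(t,y)$ representing $\tau_t$ in the trivialization, while $\rho(t,y)=|h(0)|^{-1/4}|h(t)|^{1/4}$ enters only through bounded multiplication; multiplying by the positive scalar $c(t,y)$ yields $H(t)$ as a first-order differential operator, hence an element of $\Psi^1(S\Sigma)$ for each $t\in\rr$.

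To verify $H(t)\in\pdos{1}{0}{\rr}$ together with short-range convergence to some $H_\pm\in\Psi^1(S\Sigma)$, I would argue as follows. By Hypothesis \ref{hypothesis1}, $c$, $h^{ij}$ and the induced Christoffel and spin-connection coefficients lie in $\cf(\rr,\cf(\Sigma;\cdot))$ and converge at rate $S^{-\delta}(\rr_\pm)$ as $t\to\pm\infty$. The density $\rho$ and the matrix $M$ satisfy pointwise ODEs in $t$, explicitly $\p_t\rho=\tfrac14\rho\,h^{ij}\p_t h_{ij}$ and $\p_t M=\Omega(t,y)M$ with $\Omega$ the $\p_t$-component of the spin connection of $\nabla^{SM}$ read off in our trivialization. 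In order to obtain convergent asymptotics for the conjugated operator, one works in a frame adapted to the static limit, so that the $g_\pm$-parallel transport $\tau_t^\pm$ intertwines $A_\pm$ with itself by time-translation invariance of $g_\pm$, and only the short-range discrepancy $\delta_t\defeq(\tau_t^\pm)^{-1}\tau_t$ is analysed. Its ODE driver differs from $\Omega$ only by a bounded conjugation of $\Omega-\Omega_\pm$, which lies in $S^{-\delta}(\rr_\pm,\cf(\Sigma;\cdot))$; since $\delta>1$ this driver is integrable on $\rr_\pm$, and Gronwall yields limits $\delta_\pm$ with the appropriate short-range decay of $\delta_t-\delta_\pm$ and its time derivatives. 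Assembling the ingredients one identifies $H_\pm$ as the first-order operator built from $c_\pm,h_\pm,\rho_\pm,\delta_\pm$ and obtains $H(t)-H_\pm\in\pdos{1}{-\delta'}{\rr_\pm}$ for a suitable $\delta'>0$, as required by Hypothesis \ref{hyptd}.

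The Dirac-type condition is read off from the principal symbol: by the Clifford relation \eqref{clifford}, the principal symbol of $A(t)^2$ at $(y,\xi)\in T^*\Sigma$ equals $|\xi|^2_{h(t)}\,\one$; conjugation by the zero-order invertible $U(t)$ preserves the principal part and multiplication by the positive scalar $c(t)$ rescales it, so the principal symbol of $H(t)^2$ is $c^2(t,y)|\xi|^2_{h(t)}\,\one$, which is scalar and positive. The same holds for $H_\pm$, so both are elliptic of Dirac type. To produce the factorisation \eqref{condh}, I use that $A(t)$ is formally self-adjoint on $L^2(\Sigma_t;S^+M|_{\Sigma_t})$ and $U(t)$ is unitary onto $L^2(\Sigma;S\Sigma)$, so $B(t)\defeq U(t)A(t)U(t)^{-1}$ is formally self-adjoint on $L^2(\Sigma;S\Sigma)$. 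Setting $\cp(t)\defeq c(t)^{1/2}$, $\cm(t)\defeq c(t)^{-1/2}$ (multiplication by positive smooth functions) and $H_0(t)\defeq\cp(t)B(t)\cp(t)$, one checks directly that $H_0(t)^*=H_0(t)$ and $H(t)=c(t)B(t)=\cp(t)H_0(t)\cm(t)$; the bounds $\cp(t),\cm(t)\in\pdo{0}{0}{\rr}$ and $\p_t\cp(t)=\tfrac12c(t)^{-1/2}\p_tc(t)\in\pdo{0}{-1-\delta}{\rr}$ follow immediately from Hypothesis \ref{hypothesis1}\eqref{thyp1}.

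The genuine technical point in this proof is the handling of the parallel transport $\tau_t$: in a naive trivialization its matrix need not converge as $t\to\pm\infty$, since the spin connection of the static limit $g_\pm$ typically has a nonzero $\p_t$-component. The key idea is to absorb this ``static'' part of the evolution into the target operator by using the invariance $\tau_t^\pm A_\pm(\tau_t^\pm)^{-1}=A_\pm$, so that only the genuinely short-range correction $\delta_t$ requires analytical work. All other steps---the symbol calculus, the principal symbol computation, and the self-adjoint factorisation---are then routine.
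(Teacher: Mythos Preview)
Your overall strategy---compute symbols, verify decay via the ODEs for $\rho$ and $\tau_t$, and produce the factorisation \eqref{condh}---is sound and in fact more detailed than the paper in its treatment of the parallel transport. However, there is a genuine gap in the self-adjoint factorisation step.

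You assert that $U(t):L^2(\Sigma_t;S^+M|_{\Sigma_t})\to L^2(\Sigma;S\Sigma)$ is unitary, and from this conclude that $B(t)=U(t)A(t)U(t)^{-1}$ is self-adjoint on $L^2(\Sigma;S\Sigma)$. But $U(t)$ is in general \emph{not} unitary. The inner product on each side involves $\beta=\gamma(n)$, and while the spin parallel transport $\tau_t$ preserves the spinor metric $(\cdot|\cdot)_{SM}$, it intertwines Clifford multiplication with the Levi--Civita parallel transport of the vector argument: $\tau_t\gamma(n_t)\tau_t^{-1}=\gamma(\tau_t^{TM}n_t)$. Since the unit normal $n$ is typically not parallel along the curves $t\mapsto(t,y)$ (this would require $\nabla_{\p_t}n=0$, which fails as soon as $c$ has nontrivial spatial dependence or the slices have nonzero extrinsic curvature), one has $\tau_t^{TM}n_t\neq n_0$ and hence $U(t)^*U(t)\neq\one$. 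Consequently your $H_0(t)=c^{1/2}B(t)c^{1/2}$ need not be self-adjoint, and the factorisation with $\cp(t)=c^{1/2}(t)$ does not verify \eqref{condh}.

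The paper repairs this by taking the polar decomposition $U(t)=|U^*(t)|U_1(t)$ with $U_1(t)$ unitary, setting $H_0(t)=U_1(t)c^{1/2}A(t)c^{1/2}U_1(t)^{-1}$ (which is then genuinely self-adjoint), and absorbing the positive self-adjoint factor into $\cp(t)=c^{1/2}(t)|U^*(t)|$. The required decay $\p_t\cp(t)\in\pdo{0}{-1-\delta}{\rr}$ is then obtained from the explicit formula $|U^*(t)|^2=(\tau_t^0\gamma_t\tau_0^t)\gamma_0$. Your analysis of $\tau_t$ via the comparison $\delta_t=(\tau_t^\pm)^{-1}\tau_t$ is a perfectly good way to control this factor; the only missing ingredient is recognising that the non-unitary part of $U(t)$ must be pushed into $\cp$ rather than left inside $H_0$.
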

\begin{proof} {\Red For each $t\in\rr$, by polar decomposition and invertibility $U(t)=\module{U^*(t)} U_1(t)$ for some unitary operator $U_1(t)$. From the  definition  $H(t)= c(t) U(t) A(t) {U(t)}^{-1}$ we obtain immediately $H(t)=\cp (t) H_0(t) \cm (t)$ where}
$$
\Red H_0(t) = U_1(t) c^\12(t)  A(t) c^\12 (t) U_1(t)^{-1}, \quad \cp(t)=c^\12(t)\module{U^*(t)}.
$$
We first show that $H_0(t)\in  \pdos{1}{0}{\rr}$. This follows from the fact that $U(t)$ and the coefficients of $A(t)$ depend on time only through $h(t)$ and its derivatives, which all behave as decaying symbols by \eqref{thyp2}. More precisely, recall that $U(t)=\rho(t)\tau_t=|h(0)|^{-\frac{1}{4}}|h(t)|^{\frac{1}{4}}\tau_t$, and the dependence on $t$ of the parallel transport $\tau_t$ can be deduced from the corresponding system of ODEs, see \cite[\S5]{BGM}. 

{\Red  Next, let us check that $\cp(t), \cm(t)\in  \pdo{0}{0}{\rr}$ and  $\p_t \cp(t)\in  \pdo{0}{-1-\delta}{\rr}$.  First, by  \eqref{thyp1} of Hypothesis \ref{hypothesis1} recalled above, $c(t)$ satisfies  $c(t)\in S^0(\rr,C^\infty(\Sigma))$, $\p_t c(t)\in S^{-1-\delta}(\rr,C^\infty(\Sigma))$ and $c_0^{-1}\leq c(t) \leq c_0$  uniformly in  $t\in \rr$ for some $c_0>1$, so analogous properties are satisfied by $c^\12(t)$. Furthermore, we can compute $U^*(t)$ as in \cite[\S3.2]{dun1}, and we find that 
$$
\module{U^*(t)}^2=U(t)U(t)^*=(\tau_t^0 \gamma_t \tau_0^t) \gamma_0\in S^{0}(\rr, C^\infty(\Sigma;\End(S\Sigma)))
$$
and $\p_t \module{U^*(t)}^2 \in S^{-1-\delta}(\rr, C^\infty(\Sigma;\End(S\Sigma)))$  using Hypothesis \ref{hypothesis1}. This implies $\module{U^*(t)} \in  \pdo{0}{0}{\rr}$  and  $\p_t \module{U^*(t)}\in  \pdo{0}{-1-\delta}{\rr}$, hence the stated properties of $T(t)$. 

Since $A(t)$ is elliptic, formally self-adjoint and of Dirac type,  the same is true of $H_0(t)$. Finally, the statements on $H(t)$ are concluded from the proprieties of $H_0(t)$ and $T(t)$ shown above.}
\end{proof}

\begin{refproof}{Theorem \ref{thm:final}} The proof is split in two parts: we first show the Fredholm property using the results from \sec{sec:scattering},   and then the index is computed using a straightforward extension of the method from \cite{BS}.

 \step{1} Recall that in \sec{ss3} we introduced an isomorphism $U  :   C^0_t L^2_y(M;\SpM)\to C^0(\rr, L^2(\Sigma;S\Sigma))$, and it is easily seen to extend to an isomorphism $U\cY= \widetilde{\cY}^0$. By Lemma \ref{lem:equivalence}, $
D= U^{-1}c(t)^{-1} \widetilde{D} U$,  hence  we also have $U\cX=\widetilde{\cX}^0$.

It straightforward to check that boundary conditions are mapped consistently by $U$, i.e.~$U \cX= \widetilde{\cX}^0$, and consequently
\beq\label{eq:daps}
D_{\rm APS}= U^{-1}c^{-1} \widetilde{D}_{\rm APS} U: \cX_{\rm APS}\to \cY.
\eeq
By Proposition \ref{prop:modelcase}, $\widetilde{D}_{\rm APS}$ is Fredholm, and therefore so is $D_{\rm APS}$, with $\ind({D}_{\rm APS})=\ind(  \widetilde{D}_{\rm APS})$. By Proposition \ref{prop:modelcase} combined with Proposition \ref{indexflow}, we have
\beq
\bea
\label{st1}
 \ind(D_{\APS})&=\ind \big(\one_{\opencl{-\infty,0}}(H_+)W\big|_{\Ran \one_{\open{-\infty,0}}(H_-)}\big)\\
 &=\ind(W_0^{--}(+\infty,-\infty))-\dim\Ker H_+\\
 &=\sf_{\overline{\rr}}(H_0)-\dim\Ker A_+\\
 &=\sf_{\overline{\rr}}(A)-\dim\Ker A_+,
 \eea
\eeq
provided that we justify the last identity. To show this we write  $H_0(t)$  in the form {\Red $H_0(t)=c^\12(t) U_1(t)   A(t)  U_1(t)^{-1} c^\12 (t)$}, and since unitary transformations preserve the spectral flow (extended in the natural way to the Hilbert space bundle setting), see e.g.~\cite[Thm.~3.14]{ronge}, it suffices to know that $c^{-\12} (t)H_0(t) c^{-\12} (t)$ and $H_0(t)$ have equal spectral flow. This in turn can be proved by considering the continuous deformation $$[0,1]\ni s \mapsto c^{-\frac{s}{2}}(t)H_0(t) c^{-\frac{s}{2}}(t)$$ and by using e.g.~\cite[Prop.~3]{phi} to equate the spectral flows for $s=0$ and $s=1$.

\step{2} We can find a smooth Riemannian metric $g^{\Eucl}$ on $\rr\times \Sigma$ such that  ${g}^{\Eucl}=  dt^2+h_+$ for all $t>\12$ and ${g}^{\Eucl}=  dt^2+h_-$ for all $t<-\12$. Let ${D}^{\Eucl}$ be the Riemannian Dirac operator associated to ${g}^{\Eucl}$ and let ${A}^{\Eucl}(t)$ be the induced Dirac operator on $\{t\}\times \Sigma$, in particular ${A}^{\Eucl}(t)=A_\pm(t)$ for $\pm t>\pm \12$. Furthermore, let $T>\12$  and let ${D}^{\Eucl}_{\rm APS}$ be the restriction of ${D}^{\Eucl}$ to the space of $L^2$ functions on $[-T,T]$ with Atiyah--Patodi--Singer boundary conditions at $-T$ and $T$ (defined using respectively ${A}^{\Eucl}(-T)=A_-$ and ${A}^{\Eucl}(T)=A_+$). By  the relationship between the index of ${D}^{\Eucl}_{\rm APS}$ and the spectral flow, see e.g.~\cite[(18)]{BS}, we have
\beq\label{fin1}
\bea
\ind D^{\Eucl}_{\rm APS}&= \sf_{[-1,1]}(\overline{A})-\dim\Ker A_+
\eea
\eeq
using that $\overline{A}(\pm 1)=A_\pm$.  Recall that by definition  $ \sf_{\overline{\rr}}(A)=\sf_{[-1,1]}(\overline{A})$, so in view of \eqref{st1} we get
\beq\label{fin2}
\ind D^{\Eucl}_{\rm APS}= \ind D_{\rm APS}.
\eeq
We apply the Atiyah--Patodi--Singer theorem \cite[Thm.~3.10]{Atiyah1975a}, which gives
\beq\label{fin3}
\ind D^{\Eucl}_{\rm APS}= \int_{[-T,T]\times \Sigma} \widehat{\rm A}^{\Eucl}+\12\big(\eta(A_+,A_-) -\dim\ker(A_+)-\dim\ker(A_-)\big)
\eeq
On the other hand,
\beq\label{fin4}
\int_{[-T,T]\times \Sigma} \widehat{\rm A}^{\Eucl}  = \int_{[-T,T]\times \Sigma}  \widehat{\rm A} + \int_{\p [-T,T]\times \Sigma} {\mathrm T}\widehat{\rm A}.
\eeq
As long as $T>\12$, the l.h.s.~does not depend on $T$  in view of \eqref{fin3} and \eqref{fin1}. Using Hypothesis \ref{hypothesis1}, we control  the convergence of the boundary term on the r.h.s., as it is the integral of a polynomial in the curvature tensor, the second fundamental form of the boundary and some derivatives of $c(t)$. Therefore, we can take the $T\to +\infty$ limit in \eqref{fin4} and \eqref{fin3}, which combined with \eqref{fin2} yields  \eqref{theindex}.
\end{refproof}

\section{Microlocal properties of Fredholm inverses}\label{sec:feynman}

\subsection{Preliminaries on wavefront sets} In this final section we analyse microlocal properties of Fredholm inverses of $D_{\rm APS}$.

Let us start by introducing the necessary background on wavefront sets, see e.g.~\cite{H,HormanderIII} or \cite[\S7]{G} for more details.

Let $\zero$ denote the zero section of $T^*M$. Recall that for each $u\in H^m_{\rm loc}(M;S^+M)$ with $m\in\rr$, its wavefront set $\wf(u)$ is the subset of $T^*M\setminus\zero$  defined as follows: $q\in T^*M\setminus \zero$ does \emph{not} belong to $\wf(u)$ if and only if there exists a properly supported pseudo-differential operator $A\in \Psi^0(M;S^+M)$ such that $Au\in \cf(M;S^+M)$. If $G:C_{\rm c}^\infty(M;S^+M)\to  C^\infty(M;S^+M)$ is a continuous operator then $\wf(G)$ is by definition the wavefront set of the Schwartz kernel of $G$. One also uses the somewhat more convenient primed wavefront set $\wf'(G)$, defined by
$$
(q_1,q_2)\in\wf'(G) \,  \iff  \, (q_1,-q_2) \in \wf(G),
$$
where the notation $-q_2$ refers to multiplication by $-1$ in the fibers.

In the geometric context introduced in \sec{sec2}, the characteristic set of $D$ is the following subset of $T^*M\setminus \zero$:
$$
\cN=\{ q=(x,\xi)\in T^*M\setminus\zero \st  \xi\cdot g^{-1}(x)\xi = 0 \},
$$
i.e.~$\cN$ is the zero set of the principal symbol $p(x,\xi)=\xi\cdot g^{-1}(x)\xi$ of the Lorentzian Laplace--Beltrami operator on $(M,g)$.
Integral curves in $\cN$ of the (forward, backward) Hamilton flow of $p$ are called (forward, backward) \emph{null bicharacteristics}. The characteristic set $\cN$  has two connected components which  can be distinguished one from the other by setting
$$
\cN^{\pm}\defeq\cN\cap \{(x, \xi)\in T^*M\setminus\zero \st  \, \forall v\in T_{x}M\hbox{ future directed time-like},\, \pm v\dual \xi>0 \}.
$$

\begin{definition}\label{deffeynman} One says that  $G:C_{\rm c}^\infty(M;S^+M)\to  C^\infty(M;S^+M)$ has \emph{Feynman wavefront set} if
\beq\label{tobef}
\wf'(G)\setminus {T^*_\Delta (M\times M)} \subset \{ (q_1, q_2) \in  \cN \times \cN \st q_1 \succ q_2 \}
\eeq
where $q_1 \succ q_2$ means that $q_1$ can be reached from $q_2$ by a forward null bicharacteristic, and $T^*_\Delta (M\times M)=\{ (q,q) \st q\in T^*M \}$ is the diagonal in $T^*M\times T^*M$.
\end{definition}

\begin{remark}\label{remrem} If $q_1=(x_1,\xi_1)\in \cN^+$ (resp.~$\cN^-$) and  $q_2=(x_2,\xi_2)\in\cN^+$ (resp.~$\cN^-$) are  on the same null bicharacteristic then $q_1 \succ q_2$ if and only if $x_1$ is in the causal future (resp.~past) of $x_2$ .
\end{remark}

\subsection{Wavefront set of parametrices of $D_\APS$}  It is useful to generalize the definitions  of the spaces $\cX,\cY$ and related objects from \sec{ss:ind} by setting for $m\in \rr$,
$$
\bea
\cY^m&\defeq \bra t \ket^{-\12-\epsilon} L^2_t H^m_y(M;\SpM),\\
\cX^m&\defeq\{ u \in C^0_t H^m_y(M;\SpM) \st  Du \in \cY^m\},
\eea
$$
and
$$
\cX^m_{\rm APS}\defeq\big\{ u \in \cX^m \st \lim_{t\to+\infty} \one_{\opencl{-\infty,0}}(A(t))u(t)=0, \ \lim_{t\to-\infty} \one_{\clopen{0,+\infty}}(A(t))u(t)=0 \big\}.
$$
Since $\cY^{m_2}\subset \cY^{m_1}$ and $\cX^{m_2}_{\APS}\subset \cX^{m_1}_{\APS}$ for $m_2\geq m_1$ it makes sense to write $D_{\rm APS}: \cX^m_{\rm APS}\to \cY^m$ for the restriction of $D$ to $\cX^m_{\rm APS}$.

The key argument is summarized in the next proposition.

\begin{proposition}\label{propwf} Let $m\in\rr$, suppose that $u\in \cX^m$ satisfies $Du =f$ with $f\in \cf(M;S^+M)$ and
\beq\label{ttoi}
\lim_{t\to+\infty} \one_{\mp \clopen{0,+\infty}}(A(t))u(t)=0,
\eeq
Then $\wf(u)\subset \cN^\pm$. Furthermore, the analogue for  $t\to-\infty$ holds true.
\end{proposition}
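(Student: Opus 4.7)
The plan is to reduce to the evolutionary model of \sec{sec:scattering} and use Proposition \ref{lemkey} to construct a pseudodifferential operator which, applied to the solution, becomes smooth thanks to the boundary condition; the wavefront set restriction then follows from a polarization argument. Concretely, by Lemma \ref{lem:equivalence} the equation $Du = f$ is equivalent to $\widetilde D v = g$, where $v = Uu \in \widetilde\cX^m$ and $g = cUf \in C^\infty(M)$. Since $H(t)$ is a unitary conjugate of $c(t)A(t)$ with $c(t)>0$, the hypothesis \eqref{ttoi} becomes $\lim_{t\to+\infty}\one_{\mp\clopen{0,+\infty}}(H(t))v(t) = 0$ in $H^m(\Sigma;E)$. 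Fix $\chi \in S^0(\rr,[0,1])$ agreeing with $\one_{\mp\clopen{0,+\infty}}$ outside a bounded neighborhood of $0$, and apply Proposition \ref{lemkey} (or its analogue for the other sign) to produce $R(t) \in \pdo{-1}{0}{\rr}$ such that $P(t) := \chi(H(t)) + R(t)$ satisfies the Heisenberg identity $P(t) = U(t,0)P(0)U(0,t)$ and $K_\infty := \slim_{t\to+\infty} U(0,t)R(t)U(t,0) \in \Psi^{-\infty}(E)$.

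The key intermediate step is to show that $w := Pv$ is smooth on $M$. Differentiating the conjugation identity yields $\partial_t P = i[H,P]$, and hence $\widetilde D w = P\widetilde D v = Pg \in C^\infty(M)$. To analyze $\widetilde\varrho_+ w = \lim_{t\to+\infty}e^{-itH(t)}P(t)v(t)$, I split $P = \chi(H) + R$: the first summand converges to $\chi(H_+)\widetilde\varrho_+ v$ by functional calculus, while the second, rewritten as $W(t,0)[U(0,t)R(t)U(t,0)]U(0,t)v(t)$, converges by Proposition \ref{prop:sc} to $W_+^{-1}K_\infty W_+\widetilde\varrho_+ v$. The boundary condition, together with the strong convergence $\chi(H(t))\phi \to \chi(H_+)\phi$, forces $\one_{\mp\clopen{0,+\infty}}(H_+)\widetilde\varrho_+ v = 0$, so $\chi(H_+)\widetilde\varrho_+ v = \psi(H_+)\widetilde\varrho_+ v$ with $\psi := \chi - \one_{\mp\clopen{0,+\infty}}$ compactly supported; since $H_+$ has discrete spectrum, this is smooth, and $K_\infty \in \Psi^{-\infty}(E)$ takes care of the second summand. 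Thus $\widetilde\varrho_+ w \in C^\infty(\Sigma;E)$. Proposition \ref{invertible} at every Sobolev order then gives $w \in \bigcap_m \widetilde\cX^m$, and iterating the evolution equation $\partial_t w = iH(t)w + \widetilde D w$ upgrades this to $w \in C^\infty(M;E)$.

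To conclude, decompose $v = (\one - P)v + Pv$. Smoothness of the second term gives $\wf(v) = \wf((\one - P)v)$. The principal symbol of $\one - P$ is $(\one - \chi)(h_1(t,y,\eta))$, which at each $(t,y,\eta)$ projects onto the eigenspace of $h_1$ for the eigenvalue of sign opposite to that selected by $\chi$; on the other hand, at any characteristic point $(t_0,y_0,\tau_0,\eta_0) \in \cN^\mp$ the principal symbol of $\widetilde D$ forces the polarization of any solution of $\widetilde D v \in C^\infty$ to lie in the $\tau_0$-eigenspace of $h_1$, which has the opposite sign. Thus $(\one - \chi)(h_1)$ kills the polarization of $(\one - P)v$ at $\cN^\mp$ to leading order; iterating this cancellation via Dencker-style propagation of polarization (or equivalently by a successive order-reduction refining the pseudodifferential approximation of $P$, in the spirit of the proof of Proposition \ref{lemkey}) yields $\wf((\one - P)v) \cap \cN^\mp = \emptyset$. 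Pulling back through $U$ gives $\wf(u) \subset \cN^\pm$, and the statement at $t\to-\infty$ follows by a symmetric argument using the other strong limit in Proposition \ref{lemkey}. The main obstacle is this final polarization step: Proposition \ref{lemkey} directly supplies only a principal-order cancellation, and upgrading it to all orders on $\cN^\mp$ requires either a systems version of propagation of polarization or a careful iterative refinement of $R$, which is the delicate new ingredient beyond what \cite{BS} employs in the compact case.
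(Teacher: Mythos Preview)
Your reduction to the evolutionary model and the proof that $w=Pv$ is smooth are correct, and in fact essentially the same as the paper's computation (the paper applies $P(t)$ directly to the Duhamel representation \eqref{eq:rec} of $\widetilde u$, whereas you verify $\widetilde D w = Pg$ and use Proposition \ref{invertible}, but these are equivalent). The divergence, and the gap, is in the final step.

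You have correctly identified that there is an obstacle at the end, but you have misdiagnosed it. The problem is not that Proposition \ref{lemkey} only supplies a principal-order cancellation --- it gives $P(t)$ as a spatial pseudodifferential operator to all orders --- nor that one needs a systems version of propagation of polarization. The actual issue is that the family $\one\otimes P(t)$, viewed as acting on spacetime, is \emph{not} a pseudodifferential operator on $M$: tensor products of $\Psi$DOs are generally not $\Psi$DOs. In particular your sentence ``the principal symbol of $\one-P$ is $(\one-\chi)(h_1(t,y,\eta))$'' has no meaning as a statement about an operator in $\Psi^0(M;S^+M)$, so neither polarization-set reasoning nor any order-by-order refinement of $R$ applies to it. Your proposed ``iterative refinement in the spirit of Proposition \ref{lemkey}'' cannot help, because the spatial operator $P(t)$ is already exact; what is missing is a spacetime $\Psi$DO, not a better spatial one.

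The paper makes this point explicitly and resolves it with a single black-box appeal: for each $q\in\cN^+$ one can find $B_0\in\Psi^0(\rr\times\Sigma)$ such that the composition $B:=U^{-1}\circ B_0\circ(\one\otimes(\chi(H)+R))\circ U$ \emph{is} a genuine element of $\Psi^0(M;S^+M)$, elliptic at $q$; this is an application of \cite[Thm.~18.1.35]{HormanderIII}. Since $Bu$ is smooth (by the smoothness of $Pv$ you already established), this immediately gives $q\notin\wf(u)$, with no polarization analysis needed. So the missing ingredient is a short microlocal argument about composing $\one\otimes P$ with a suitable spacetime cutoff, not the more elaborate iterative or Dencker-type machinery you propose.
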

\begin{proof} For the sake of definiteness  we focus on the `$+$' case in \eqref{ttoi}. Let $\widetilde u = U u$. Then, by \eqref{ttoi}, the asymptotic datum $\widetilde\varrho_+ \widetilde u=\lim_{t\to+\infty} e^{-i t H(t)}u(t)$ satisfies
\beq\label{tcbw}
\one_{\clopen{0,+\infty}}(H_+)  \widetilde\varrho_+ \widetilde u=0.
\eeq
Furthermore, $\widetilde u$ solves $\widetilde{D}\widetilde{u}= \widetilde f$, where $\widetilde f = c\, U f$. Therefore,  we can express it in terms of the asymptotic data using formula \eqref{eq:rec}, which gives
\beq\label{errrd}
\widetilde u(t)=\lim_{s\to +\infty} U(t,s) e^{isH(s)} \widetilde\varrho_+ \widetilde u - \int_t^{+\infty}U(t,s)\widetilde f (s) ds.
\eeq
The second term is smooth and will not contribute to $\wf(u)$, so we can assume without loss of generality that $\widetilde f=0$.
Let now $\chi\in S^0(\rr,[0,1])$ be equal $0$ on $\opencl{-\infty,\frac{1}{4}}$  and $1$ on $\clopen{\12,+\infty}$.
We now apply Proposition \ref{lemkey} in the same way as in the proof of Lemma \ref{compactoperator}. This gives us the existence of $R(t)\in\pdos{-1}{0}{\rr}$ such that
$$
 \big(\chi(H(t)) +  R(t)\big) U(t,s) = U(t,s)\big( \chi(H(s)) + R(s) \big)
$$
and
\beq
\slim_{s\to\infty} U(0,s) R(s) U(s,0)\in \Psi^{-\infty}(S\Sigma).
\eeq
In consequence, in combination with \eqref{errrd} we obtain
\beq\label{kjkjkji}
\bea
 \big(\chi(H(t)) +  R(t)\big) \tilde{u}(t)&=  \big(\chi(H(t)) +  R(t)\big)\lim_{s\to +\infty} U(t,s) e^{isH(s)}\widetilde \varrho_+ \widetilde u \\
 &= \lim_{s\to +\infty} U(t,s)\big( \chi(H(s)) + R(s) \big)e^{isH(s)} \widetilde\varrho_+ \widetilde u\\
 &= \lim_{s\to +\infty} U(t,s) e^{isH(s)}  \chi(H(s)) \widetilde\varrho_+ \widetilde u \fantom +U(t,0)  \lim_{s\to +\infty}  (U(0,s)  R(s) U(s,0)) W(0,s) \widetilde\varrho_+ \widetilde u \\ & =U(t,0)  \lim_{s\to +\infty}  (U(0,s)  R(s) U(s,0)) W(0,s) \widetilde\varrho_+ \widetilde u \fantom \in \cf(\rr\times \Sigma;S\Sigma).
 \eea
\eeq
Note that if we interpret $\one\otimes  \big(\chi(H(t)) +  R(t)\big)$ as an operator acting jointly on spatio-temporal variables, it is not pseudo-differential (tensor products of pseudo-differential operators are not necessarily pseudo-differential). However, an argument from \cite[78, Thm.~18.1.35]{HormanderIII} yields a slight modification which is pseudo-differential. More precisely, for all $q\in \cN^+$  we can find $B_0\in \Psi^0(\rr\times \Sigma)$ such that
$
B\defeq  U^{-1}\circ B_0 \circ (\one \otimes (\chi(H) + R)) \circ U
$
is a pseudo-differential operator in  $\Psi^0(M;S^+M)$ and is elliptic at $q$.  Since by \eqref{kjkjkji}, $B u$ is smooth,  we conclude that $q\notin \wf(u)$. Therefore, $\wf(u) \subset \cN^-$ as claimed.
\end{proof}

From the definition of $\cX_\APS^m$ and Proposition  \ref{propwf} it follows immediately that if $u\in \cX_\APS$ and $Du=0$ then $\wf(u)\subset \cN^+ \cap \cN^-=\emptyset$.

\begin{corollary}\label{coroc} If $u\in \Ker D_{\APS}$ then $u\in \cf(M;S^+M)$.
\end{corollary}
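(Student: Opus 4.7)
The plan is to deduce the corollary essentially immediately from the two halves of Proposition \ref{propwf}, using that $\cN^+$ and $\cN^-$ are the two disjoint connected components of the characteristic set $\cN$ of $D$.

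First, I would observe that if $u \in \Ker D_\APS$, then $u \in \cX_\APS \subset \cX^0$ and $Du = 0 \in \cf(M;S^+M)$, so $u$ has a wavefront set in the usual distributional sense. By the definition of $\cX_\APS$, $u$ satisfies the boundary condition at $t\to+\infty$
$$
\lim_{t\to+\infty} \one_{\opencl{-\infty,0}}(A(t))u(t)=0,
$$
which is precisely the ``$+$'' case of hypothesis \eqref{ttoi} in Proposition \ref{propwf}. Applying that proposition gives $\wf(u) \subset \cN^+$.

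Second, I would apply the analogous statement at $t\to-\infty$ (which is the second part of Proposition \ref{propwf}) to the other APS condition
$$
\lim_{t\to-\infty} \one_{\clopen{0,+\infty}}(A(t))u(t)=0
$$
built into the definition of $\cX_\APS$. The symmetric form of Proposition \ref{propwf} then yields $\wf(u) \subset \cN^-$.

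Combining the two inclusions, $\wf(u) \subset \cN^+ \cap \cN^- = \emptyset$, so $u \in \cf(M;S^+M)$ by the definition of the wavefront set. Since Proposition \ref{propwf} has already been established and the remark preceding the corollary spells out this reasoning, there is no genuine obstacle here; the only point requiring minimal care is verifying that $u \in \cX_\APS$ implies enough local Sobolev regularity to make $\wf(u)$ well-defined, which is built into $\cX^0 \subset C^0_t L^2_y(M;S^+M) \subset H^0_\loc(M;S^+M)$.
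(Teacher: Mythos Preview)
Your proof is correct and follows exactly the same reasoning as the paper, which states just before the corollary that from the definition of $\cX_\APS^m$ and Proposition~\ref{propwf} it follows immediately that $\wf(u)\subset \cN^+\cap\cN^-=\emptyset$. Your added remark that $u\in\cX^0$ guarantees enough local regularity for $\wf(u)$ to be defined is a harmless clarification.
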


Let now $D_\APS^{\inv} :  \cY^m\to \cX_\APS^m$   be a Fredholm inverse of $D_\APS:\cX_\APS^m\to \cY^m$ associated to a complement $\cK^m$ of $\Ker D_{\APS}$ in $\cX^m$ and to a complement $\cR^m$ of $\Ran D_{\APS}$ in $\cY^m$. We can choose the complement in a compatible way, in the sense that
\beq\label{compatible}
\cK^{m_2}\subset \cK^{m_1}, \quad    \cR^{m_2}\subset \cR^{m_1}
\eeq
 for $m_2\geq m_1$. Furthermore, by density of $\cf(M;S^+M)$ in $\cY^m$ we can choose the finite dimensional space $\cR^m$ in such way that $\cR^m\subset \cf(M;S^+M)$.

Then, $D_\APS^\inv$ is a parametrix in the sense that
\beq\label{ptl}
D\circ D_\APS^\inv = \one + L, \quad D_\APS^\inv\circ D = \one + R
\eeq
acting on, say, $H_{\rm c}^m(M;S^+M)\subset \cY^m\cap \cX^m_{\APS}$,  where $L,R :  H_{\rm c}^m(M;S^+M) \to \cf(M;S^+M)$ are smoothing and of finite rank. The $m$-independent  notation is justified by the compatibility inclusions \eqref{compatible}.

\begin{theorem}\label{thm:final2} Under  the assumptions of Theorem \ref{thm:final}, let $D_\APS^\inv : \cY^m \to \cX^m_\APS$ be a Fredholm inverse of $D_\APS$ satisfying   \eqref{ptl}. Then $D_\APS^\inv$ has Feynman wavefront set.
\end{theorem}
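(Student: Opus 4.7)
My strategy is to combine standard propagation of singularities for the Dirac operator with a directional refinement obtained by decomposing $D_\APS^\inv$ against retarded and advanced Fredholm inverses $D_\pm^{-1}$: these already have the ``correct'' Feynman wavefront set in one of the two components $\cN^\pm$ and require correction only in the other, and this correction will be controlled by Proposition~\ref{propwf}.  As a first step, from the parametrix identities $D\circ D_\APS^\inv=\one+\text{smoothing}$ and $D_\APS^\inv\circ D=\one+\text{smoothing}$ on $H^m_{\rm c}(M;S^+M)$, and using that $D^\ast D$ has scalar principal symbol $\xi\cdot g^{-1}(x)\xi$ times the identity on $S^+M$, H\"ormander's theorem on propagation of singularities yields $\wf'(D_\APS^\inv)\setminus T^*_\Delta(M\times M)\subset \cN\times\cN$ and its invariance under the joint null bicharacteristic flow, so any $(q_1,q_2)$ in this set lies on a single null bicharacteristic in $\cN$.

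\textbf{Directional refinement.}  In the evolutionary model, via Lemma~\ref{lem:equivalence} and the isomorphism~\eqref{eq:daps}, the arguments in the proof of Proposition~\ref{prop:positivity} (performed at both $-\infty$ and $+\infty$) yield, modulo smoothing,
\[
\widetilde D_\APS^\inv \;=\; \widetilde D_\pm^{-1}\;-\;\widetilde\varrho_\pm^{-1}\,\pi_\pm^\mp\,\widetilde\varrho_\pm\,\widetilde D_\pm^{-1},
\]
where $\pi_+^-\defeq\one_{\opencl{-\infty,0}}(H_+)$ and $\pi_-^+\defeq\one_{\clopen{0,+\infty}}(H_-)$.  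Transferring this decomposition to the geometric setting by conjugation with $U$ and $c$ (both pseudo-differential with trivial canonical relation, hence preserving wavefront sets) defines the corresponding geometric operators $D_\pm^{-1}$ and corrections $V_\pm\defeq D_\APS^\inv-D_\pm^{-1}$.  For every $f$, $V_\pm f$ is (modulo a smooth term) a solution of $Du=0$ whose asymptotic datum at $\pm\infty$ belongs to $\Ran \pi_\pm^\mp$, i.e.~the strictly positive (resp.~strictly negative) spectral component of $H(t)$ at $\pm\infty$ vanishes up to a finite-rank kernel contribution.  Proposition~\ref{propwf} then yields $\wf(V_\pm f)\subset \cN^\mp$ for every $f$, and consequently the first factor of $\wf'(V_\pm)\setminus T^*_\Delta$ is contained in $\cN^\mp$.

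\textbf{Conclusion and main obstacle.}  The retarded and advanced inverses $D_\pm^{-1}$ obey the classical causal wavefront bound
\[
\wf'(D_\pm^{-1})\setminus T^*_\Delta \subset \{(q_1,q_2)\in \cN\times\cN:\,q_1,q_2\text{ on the same null bichar.},\,x_1\in J^\pm(x_2)\},
\]
a consequence of their causal support via energy estimates.  Combining with the previous step: if $(q_1,q_2)\in \wf'(D_\APS^\inv)\setminus T^*_\Delta$ satisfies $q_1\in \cN^+$, then $q_1\notin\cN^-$ rules out $(q_1,q_2)\in \wf'(V_+)$, forcing $(q_1,q_2)\in \wf'(D_+^{-1})$, so $x_1\in J^+(x_2)$; by Remark~\ref{remrem} this is precisely $q_1\succ q_2$.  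The symmetric argument using $D_-^{-1}$ treats the case $q_1\in \cN^-$, and together these give the Feynman condition~\eqref{tobef}.  The principal expected obstacle lies in the directional step: one must verify that the hypothesis $Du\in \cf(M;S^+M)$ of Proposition~\ref{propwf} holds for $u=V_\pm f$ (which is in fact automatic since $DV_\pm=-R$ with $R$ smoothing and $\Ran R\subset \cf$), and handle carefully the finite-rank contributions from $\Ker A_\pm$, which produce smoothing discrepancies between the range of $\pi_\pm^\mp$ and the strict spectral condition of Proposition~\ref{propwf} and are absorbed into the ``mod smoothing'' errors of the decomposition above.
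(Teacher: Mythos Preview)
Your argument is correct and in fact takes a cleaner route than the paper in one key respect.  Both you and the paper decompose $D_\APS^\inv=D_\pm^{-1}+V_\pm$ and use Proposition~\ref{propwf} to confine the \emph{first} factor of $\wf'(V_\pm)$ to $\cN^\mp$.  At this point the arguments diverge.  The paper invokes the positivity statement of Proposition~\ref{prop:positivity} and a Cauchy--Schwarz argument to conclude that $\wf'(V_\pm)$ is \emph{symmetric}, hence contained in $(\cN^\mp\cup\zero)\times(\cN^\mp\cup\zero)$; it then uses $V_+-V_-=D_-^{-1}-D_+^{-1}$ to pin down $\wf'(V_\pm)$ exactly.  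You instead bypass positivity entirely: knowing only the first-factor bound, you intersect the two inclusions $\wf'(D_\APS^\inv)\subset\wf'(D_+^{-1})\cup\wf'(V_+)$ and $\wf'(D_\APS^\inv)\subset\wf'(D_-^{-1})\cup\wf'(V_-)$, and let the causal support of $D_\pm^{-1}$ supply the direction along the bicharacteristic.  This is more elementary---Proposition~\ref{prop:positivity} and \sec{ss:positivity} are not needed for the Feynman property itself.  What the paper's route buys in return is the sharper identity \eqref{pkpkpk}, i.e.\ the two-sided Hadamard-type bound on $\wf'(D_\APS^\inv-D_\pm^{-1})$, which is the statement of independent interest for QFT applications.

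Two small points to keep clean in your write-up.  First, your Step~1 is only needed in the weak form $\wf'(D_\APS^\inv)\setminus T^*_\Delta\subset\cN\times T^*M$ (ruling out $q_1\in\zero$ so the case split $q_1\in\cN^\pm$ is exhaustive); the ``same bicharacteristic'' part is already contained in the causal bound for $\wf'(D_\pm^{-1})$.  Second, passing from ``$\wf(V_\pm f)\subset\cN^\mp$ for all $f$'' to a bound on the first factor of $\wf'(V_\pm)$ requires the \emph{uniform} mapping property $B_\pm V_\pm:H^m_\c\to\cf$ continuously for all $m$ (as the paper makes explicit); this is indeed delivered by the proof of Proposition~\ref{propwf}, so you should cite that rather than the pointwise statement alone.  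Your handling of the $\Ker A_\pm$ discrepancy as a finite-rank smoothing correction is fine.
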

\begin{proof} Let $D_\pm^{-1}:H^m_\c(M;S^+M) \to H^m_\loc(M;S^+M) $ be the retarded/advanced inverse  of $D$, see e.g.~\cite[\S4]{Islam2020} and references therein for microlocal properties of $D_\pm^{-1}$. Then,
$$
D(D_\APS^{\inv} - D_\pm^{-1} )= L, \quad (D_\APS^{\inv} - D_\pm^{-1} )D= R.
$$
Furthermore, for all $f\in H^m_\c(M;S^+M)$, $u=(D_\APS^{\inv} - D_\pm^{-1} )f$ satisfies
$$
\lim_{t\to \mp\infty} \one_{\pm\clopen{0,+\infty}}\big(A(t)\big)u(t)=0
$$
because
$
D_{\APS}^\inv f \in \cX_\APS^m
$
and by definition, $D_\pm^{-1}$ is supported in the causal future/past of $\supp f$. Therefore, if $B_\pm\in \Psi^0(M;S^+M)$ is constructed as the operator $B$ in the proof of Proposition \ref{propwf} then by the computation therein we find that $B_\pm(D_\APS^{\inv}-D_\pm^{-1}):H_\c^m(M;S^+M) \to \cf(M;S^+M)$ continuously for all $m\in\rr$. Therefore,
\beq\label{rrtt}
\wf'(D_\APS^{\inv}-D_\pm^{-1})\subset (\cN^\mp \cup \zero)\times T^*M.
\eeq
By the relationships \eqref{eq:equivalence2}--\eqref{eq:daps} between $D$, $D_{\rm APS}$ and the operators  $\widetilde{D}$, $\widetilde{D}_{\rm APS}$ from \secs{sec:scattering}{sec:fredholm}, there exists a Fredholm inverse $\widetilde{D}_{\rm APS}^\inv$ of $\widetilde{D}_{\rm APS}$ such that
$$
{D}_\APS^{\inv}-{D}_\pm^{-1}=  U^{-1} (\widetilde{D}_{\rm APS}^\inv- \widetilde{D}_\pm^{-1}) c U .
$$
By Proposition \ref{prop:positivity}, $\widetilde{D}_{\rm APS}^\inv- \widetilde{D}_\pm^{-1}$ is  a positive operator modulo smooth terms. This allows us to employ a standard Cauchy--Schwarz inequality argument, see e.g.~\cite[Lem.~8.4.6]{G}, to conclude that  $\wf'(\widetilde{D}_\APS^{\inv}-\widetilde{D}_\pm^{-1})$ is symmetric. Hence   $\wf'({D}_\APS^{\inv}-{D}_\pm^{-1})$ is symmetric, which in combination with \eqref{rrtt} yields
$$
\wf'(D_\APS^{\inv}-D_\pm^{-1})\subset (\cN^\mp \cup \zero)\times(\cN^\mp \cup \zero).
$$
In particular, the two wavefront sets corresponding to different signs are disjoint. In view of the identity
$$
(D_\APS^{\inv}-D_+^{-1} )- (D_\APS^{\inv}-D_-^{-1} ) = D_-^{-1}- D_+^{-1},
$$
this implies
\beq\label{pkpkpk}
\wf'(D_\APS^{\inv}-D_\pm^{-1})= \wf'(D_-^{-1}- D_+^{-1})\cap (\cN^\mp \times \cN^\mp).
\eeq
It is well known that
$$
\bea
\wf'(D_\pm^{-1})\setminus {T^*_\Delta (M\times M)} &\subset \{ (q_1, q_2) \in  \cN^\pm \times \cN^\pm \st q_1 \succ q_2  \} \fantom \cup  \{ (q_1, q_2) \in  \cN^\mp \times \cN^\mp \st q_2 \succ q_1  \}
\eea
$$
and $\wf'( D_-^{-1}- D_+^{-1} )\subset \{ (q_1, q_2) \in  \cN \times \cN \st q_1 \succ q_2  \mbox{ or } q_2\succ q_1  \}$, see e.g.~\cite[\S4]{Islam2020}, cf.~Remark \ref{remrem} for a convenient rephrasing  of the relation $q_1\succ q_2$.
From the identity 
$$
D_\APS^{\inv}=(D_\APS^{\inv}-D_\pm^{-1})+D_\pm^{-1}
$$ combined with \eqref{pkpkpk}, we can deduce that
\begin{align*}
\wf'(D_\APS^{\inv})\setminus  {T^*_\Delta (M\times M)}&\subset\textstyle\bigcap_\pm \big(\wf'(D_\APS^{\inv}-D_\pm^{-1})\cup \wf'(D_\pm^{-1})\big)\setminus  {T^*_\Delta (M\times M)}\\
&\subset  \textstyle\bigcap_\pm \big(\{ (q_1, q_2) \in  \cN^{\mp} \times \cN^{\mp} \st q_1 \succ q_2  \mbox{ or } q_2\succ q_1  \} \\ & \phantom{\subset\textstyle\bigcap_\pm i } \cup \{ (q_1, q_2) \in  \cN^\pm \times \cN^\pm \st q_1 \succ q_2  \}\big) \\
&\subset  \{ (q_1, q_2) \in  \cN^+ \times \cN^+ \st q_1 \succ q_2  \}\\ & \phantom{\subset\,}  \cup  \{ (q_1, q_2) \in  \cN^- \times \cN^- \st q_1 \succ q_2  \}\\
&\subset  \{ (q_1, q_2) \in  \cN\times \cN \st q_1 \succ q_2  \}
 \end{align*}
which implies the bound \eqref{tobef} on the wavefront set $\wf'(D_\APS^{\inv})$.
\end{proof}




\appendix

 \section{}\label{app}\init

\subsection{Abstract Fredholm theory}\label{app:fredholm}

Our objective in this subsection is to provide an abstract version of the results in \cite[\S2--3]{BS} and complement them with a useful formula for Fredholm inverses.

If $A$, $B$ are two operators, we will say that they \emph{have equal index} either if both are Fredholm and $\ind A=\ind B$, or if neither  is Fredholm (in which case they  are said to have ``infinite index'').

Following \cite{BS}, we start by recalling  the following elementary lemma, which can be found in, e.g., \cite[Prop.~A.1]{BB}.

\begin{lemma}\label{lem:fredholm} Let $\cX$ be a Hilbert space and $\cE$, $\cF$ Banach spaces. Let  $K:\cX\to\cE$, $L:\cX\to\cF$ be bounded and assume that $L$ is surjective. Then $K:\Ker L \to \cE$ and $K\oplus L:\cX\to\cE\oplus \cF$ have equal index.
\end{lemma}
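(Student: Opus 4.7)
The plan is to exploit the Hilbert space structure of $\cX$ by orthogonally decomposing $\cX=\Ker L\oplus (\Ker L)^\perp$, and then reducing $K\oplus L$ by elementary block operations to the direct sum of $K|_{\Ker L}$ with a topological isomorphism. Since the conclusion is phrased so as to include the case when neither operator is Fredholm, I must ensure that each step is an \emph{equivalence}, not just an implication.

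First, I would note that since $L:\cX\to\cF$ is bounded and surjective between Banach spaces, the open mapping theorem applies: the restriction $L_0\defeq L|_{(\Ker L)^\perp}:(\Ker L)^\perp\to\cF$ is bijective and bounded, hence a topological isomorphism with bounded inverse $L_0^{-1}:\cF\to(\Ker L)^\perp$. With respect to the decomposition $\cX=\Ker L\oplus (\Ker L)^\perp$ and $\cE\oplus\cF$, the operator $K\oplus L$ has the block form
$$
K\oplus L=\begin{pmatrix} K|_{\Ker L} & K|_{(\Ker L)^\perp} \\ 0 & L_0 \end{pmatrix}.
$$

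The key step is to kill the upper-right block by a bounded change of coordinates on the target. I would introduce $S:\cE\oplus\cF\to\cE\oplus\cF$ defined by $S(e,f)\defeq(e-K|_{(\Ker L)^\perp}L_0^{-1}f,\,f)$, which is visibly a topological isomorphism (the inverse is obtained by flipping the sign). A direct computation using $L_0^{-1}L(x_0+x_1)=x_1$ for $x_0\in\Ker L$ and $x_1\in(\Ker L)^\perp$ gives
$$
S\circ(K\oplus L)=K|_{\Ker L}\oplus L_0.
$$
Since $L_0$ is an isomorphism, both $\Ker(K|_{\Ker L}\oplus L_0)$ and the cokernel agree with those of $K|_{\Ker L}$. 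Finally, composition on the left with the isomorphism $S$ preserves kernel and cokernel as well, so $K\oplus L$ and $K|_{\Ker L}$ are simultaneously Fredholm or not, with equal index in the former case.

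The only point requiring any care is the ``infinite index'' convention in the statement: one must verify that the reduction is an equivalence in both directions, which is automatic here because every transformation used (the orthogonal decomposition, the isomorphism $S$, and direct summing with the isomorphism $L_0$) preserves the dimensions of both the kernel and the cokernel. I do not anticipate any serious obstacle, the main subtlety being simply to record that $L_0^{-1}$ exists as a \emph{bounded} operator, which is exactly what the open mapping theorem supplies.
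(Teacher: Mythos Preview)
Your proof is correct and is essentially the same argument as the paper's: both use the Hilbert space structure to pick a closed complement of $\Ker L$, write $K\oplus L$ in block form, and then reduce by invertible triangular block operations to the direct sum of $K|_{\Ker L}$ with an isomorphism. The only cosmetic difference is that the paper factors $L\oplus K$ as a product of three matrices (composing with isomorphisms on \emph{both} sides to reach $\one\oplus K|_{\Ker L}$), whereas you compose only on the target side with $S$ to reach $K|_{\Ker L}\oplus L_0$; this is the same manipulation organised slightly differently.
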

\proof Since $\cX$ is Hilbert, there exists a closed subspace $\cK\subset\cX$ complementary to $\Ker L$. A direct computation shows that
\beq\label{eq:eee}
L \oplus K = \begin{pmatrix} L & 0 \\ K|_{\cK} & K|_{\Ker L}\end{pmatrix} = \begin{pmatrix} 1 & 0 \\ KL^{-1}& 1  \end{pmatrix}  \begin{pmatrix} 1 & 0  \\  0 & K|_{\Ker L}  \end{pmatrix}
 \begin{pmatrix} L & 0  \\ 0 & 1 \end{pmatrix},
\eeq
where the matrix notation refers to the decomposition $\cX= \cK \oplus \Ker L $. In view of the identity
$$
\begin{pmatrix} 1 & 0 \\ KL^{-1} & 1 \end{pmatrix} \begin{pmatrix} 1 &  0\\ -KL^{-1} & 1  \end{pmatrix} = \begin{pmatrix} 1 & 0  \\  0 & 1  \end{pmatrix},
$$
the first matrix on the r.h.s.~of  \eqref{eq:eee} is an isomorphism, and since the third one is also an isomorphism,  one gets from \eqref{eq:eee} that $L\oplus K$ and $\one\oplus (K|_{\Ker L})$ have equal index. One concludes by observing that $\one\oplus (K|_{\Ker L})$ and  $(K|_{\Ker L})$ have equal index.
\qeds

Let $\cH_+,\cH_-$ be two Banach spaces. Let $\pi^{\pm}_+$ be a pair of complementary projections on $\cH_+$, and $\pi^{\pm}_-$ be a pair of complementary projections on $\cH_-$.
Let us denote
$$
\cH^\pm_+=\pi^\pm_+\cH_+, \  \ \cH^\pm_-=\pi^\pm_-\cH_-.
$$
If $W: \cH_-\to\cH_+$, we use the matrix notation
\beq\label{mnot}
W= \begin{pmatrix}W^{++} & W^{+-} \\ W^{-+} & W^{--}   \end{pmatrix}: \cH^+_- \oplus \cH_-^- \to  \cH_+^+\oplus \cH_+^-,
\eeq
where $W^{++}=\pi_+^+ W \imath_-^+: \cH_-^+\to \cH_+^+$ with $\imath_-^+: \cH^+_-\to \cH_-$ the canonical injection, and similarly for the other components.
We state below an abstract version of \cite[Thm.~3.2]{BS} combined with \cite[Thm.~3.3]{BS}. The proof  is largely analogous to \cite{BS}.

\begin{proposition} \label{prop:fredholm} Let $\cX$ be a Hilbert space and $\cY$ a Banach space. Let $P:\cX\to \cY$ and  $\varrho_\pm:\cX\to\cH_\pm$ be bounded, and suppose that:
\beq\label{eq:fr1}
\varrho_\pm \oplus  P : \cX\to\cH_\pm\oplus \cY \mbox{ is boundedly invertible}.
\eeq
Let $\varrho=\pi_-^+\varrho_- \oplus \pi_+^-\varrho_+$. Define\footnote{Equivalently, $W$ can be expressed by the formula $Wh= \varrho_+ \circ (\varrho_- \oplus P)^{-1} (h,0)$
for all $h\in\cH_-$.}
$$
W=\varrho_+\varrho_-^{-1}: \cH_-\to \cH_+,
$$
where $\varrho_-^{-1}:\cH_-\to \Ker P$ is the inverse of $\varrho_-|_{\Ker P}$. If $W^{+-}$ is compact then  $W^{--}$ is Fredholm. Moreover, $W^{--}$ and $P: {\Ker\varrho}\to \cY$
have equal index.
\end{proposition}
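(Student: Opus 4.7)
The proof is organized in three steps, along the lines of \cite[\S3]{BS}.

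\emph{Step 1: $W$ is a Banach space isomorphism.} The hypothesis \eqref{eq:fr1} that $\varrho_\pm \oplus P : \cX \to \cH_\pm \oplus \cY$ is boundedly invertible, when restricted to $\Ker P \subseteq \cX$, shows that $\varrho_\pm|_{\Ker P} : \Ker P \to \cH_\pm$ is a bounded bijection, hence a Banach space isomorphism by the open mapping theorem. Consequently $\varrho_-^{-1} : \cH_- \to \Ker P$ is well-defined and bounded, and $W = \varrho_+ \circ \varrho_-^{-1} : \cH_- \to \cH_+$ is a Banach space isomorphism.

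\emph{Step 2: $W^{--}$ is Fredholm.} Set $\tilde W := W - W^{+-}\pi_-^-$, where $\pi_-^-$ denotes the projection $\cH_-\to\cH_-^-$ followed by the canonical injection back into $\cH_-$. Since $W^{+-}$ is compact by hypothesis, $\tilde W$ is a compact perturbation of the isomorphism $W$, and is therefore Fredholm of index zero. In block form $\tilde W$ is lower triangular:
\[
\tilde W = \begin{pmatrix} W^{++} & 0 \\ W^{-+} & W^{--} \end{pmatrix}.
\]
The matrix identities $W^{-1}W = \one$ and $WW^{-1} = \one$ yield in the lower-right components
\[
(W^{-1})^{--} W^{--} = \one - (W^{-1})^{-+}W^{+-},\qquad W^{--}(W^{-1})^{--} = \one - W^{-+}(W^{-1})^{+-},
\]
the first of which, combined with the compactness of $W^{+-}$, exhibits $(W^{-1})^{--}$ as a left parametrix for $W^{--}$ modulo compact operators, yielding a finite-dimensional kernel and closed range. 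A parallel analysis using the companion identity provides the right-parametrix property, so that $W^{--}$ (and symmetrically $W^{++}$) are Fredholm with $\ind W^{++} + \ind W^{--} = \ind \tilde W = 0$.

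\emph{Step 3: Index identity.} Using the isomorphism $\Psi := (\varrho_- \oplus P)^{-1}: \cH_- \oplus \cY \to \cX$, introduce $Q := \Psi(0,\cdot) : \cY \to \Ker \varrho_-$ and set $\alpha' := \pi_+^- \varrho_+ Q : \cY \to \cH_+^-$. Parametrizing $u = \Psi(h, f)$, the condition $u \in \Ker \varrho$ translates into $\pi_-^+ h = 0$ and $W^{--}\pi_-^- h + \alpha' f = 0$, under which $P|_{\Ker \varrho}$ identifies with the restriction of the projection $(h,f) \mapsto f$ to this constraint. A direct calculation in these coordinates then shows that $P \oplus \varrho : \cX \to \cY \oplus \cH_-^+ \oplus \cH_+^-$ is Fredholm with index $\ind W^{--}$, its kernel and cokernel being canonically isomorphic to $\Ker W^{--}$ and $\mathrm{Coker}\, W^{--}$ respectively. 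To deduce the same index for $P|_{\Ker \varrho}$, I apply Lemma \ref{lem:fredholm} with $L := \varrho$ and $K := P$: the required surjectivity of $\varrho$ reduces to the identity $\Ran W^{--} + \Ran \alpha' = \cH_+^-$, to be established by combining the invertibility of $W$ with the Fredholm properties derived in Step 2 and the compactness of $W^{+-}$. The conclusion $\ind P|_{\Ker \varrho} = \ind W^{--}$ then follows.

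The main obstacle is the surjectivity of $\varrho$ needed in Step 3; this requires an interplay between the abstract Fredholm framework of Step 2 and the algebraic identities relating the block components of $W$ and $W^{-1}$, and it is here that the compactness of $W^{+-}$ is used in full strength.
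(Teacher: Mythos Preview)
Your Steps 1 and 2 are essentially the paper's argument (the paper uses the identity $(W^{-1})^{--}W^{--}=\one-(W^{-1})^{-+}W^{+-}$ directly rather than your block-triangular perturbation $\tilde W$, but the content is the same). The real divergence is in Step 3, and there your write-up is both more complicated than necessary and genuinely incomplete.

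The paper's Step 3 avoids your parametrization via $\Psi$, $Q$ and $\alpha'$ altogether. Instead it observes that on $\Ker P$ one has $\varrho=(\pi_-^+\oplus\pi_+^-W)\circ\varrho_-$, and since $\varrho_-|_{\Ker P}$ is an isomorphism, the index of $\varrho|_{\Ker P}$ equals that of the matrix
\[
\pi_-^+\oplus\pi_+^-W=\begin{pmatrix}\one&0\\W^{-+}&W^{--}\end{pmatrix}=\begin{pmatrix}\one&0\\W^{-+}&\one\end{pmatrix}\begin{pmatrix}\one&0\\0&W^{--}\end{pmatrix},
\]
which is visibly $\ind W^{--}$. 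Then Lemma~\ref{lem:fredholm} is applied twice: first with $L=P$ (surjective by \eqref{eq:fr1}) to get $\ind(\varrho|_{\Ker P})=\ind(\varrho\oplus P)$, and then with $L=\varrho$ to get $\ind(\varrho\oplus P)=\ind(P|_{\Ker\varrho})$. This is shorter than your coordinate computation and makes the structure transparent.

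As for the ``main obstacle'' you single out---the surjectivity of $\varrho$ needed to invoke Lemma~\ref{lem:fredholm} with $L=\varrho$---you correctly identify it but do not prove it; you only say it ``reduces to'' $\Ran W^{--}+\Ran\alpha'=\cH_+^-$ and that this is ``to be established''. That leaves the argument unfinished. The paper treats this point as evident (it writes $\Ran\varrho=\cH_-^+\oplus\cH_+^-$ without further comment), so your proposal does not fall short of the paper's level of detail here, but a proof that stops at ``to be established'' is not a proof. If you follow the paper's cleaner factorization route you at least arrive at exactly the same point with less machinery; the surjectivity of $\varrho$ is then the only thing left to justify, and you should either supply the argument or cite where it is done (e.g.\ the corresponding step in \cite{BS}).
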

\proof \step{1} Observe that $W$ is invertible of inverse $W^{-1}=\varrho_-\varrho_+^{-1}$. Therefore, in the matrix notation generalizing  \eqref{mnot}, the identity $W^{-1} W= \one$
implies $(W^{-1})^{--} \circ W^{--}+ (W^{-1})^{-+} \circ W^{+-}=\one$, so $(W^{-1})^{--} \circ W^{--}$ equals $\one$ modulo a compact term, hence $W^{--}$ is Fredholm.

\step{2} We claim that the operators $W^{--}:\cH_-^-\to \cH_+^-$  and
\beq\label{eq:gk0}
\varrho :  \Ker P \to \cH_-^+ \oplus \cH_+^-
\eeq
have equal index. On $\Ker P$, we have
$$
\varrho =  \pi_-^+\varrho_- \oplus \pi_+^-\varrho_+ =  (\pi_-^+ \oplus   \pi_+^- W)\varrho_-,
$$
where $\varrho_-: \Ker P \to \cH_-=\cH_-^+  \oplus \cH_-^-$ is an isomophism by \eqref{eq:fr1}. Furthermore,
\begin{align*}
(\pi_-^+ \oplus   \pi_+^- W):  \cH_-^+  \oplus \cH_-^-  &\to \cH_-^+\oplus \cH_+^- \\
(u^+,u^-)& \mapsto (u^+,W^{-+}u^+ + W^{--}u^-)
\end{align*}
 is represented by the matrix
 \beq\label{eq:gk}
 \begin{pmatrix} \one & 0 \\  W^{-+} &  W^{--} \end{pmatrix}  =
  \begin{pmatrix} \one & 0 \\  W^{-+} &  \one  \end{pmatrix}    \begin{pmatrix} \one & 0 \\  0 &  W^{--} \end{pmatrix} ,
 \eeq
 where $\left(\begin{smallmatrix} \one & 0 \\  W^{-+} &  \one  \end{smallmatrix}\right)$ is an isomorphism  with inverse  $\left(\begin{smallmatrix} \one & 0 \\  -W^{-+} &  \one  \end{smallmatrix}\right)$. Observe that $W^{--}$ and $\left(\begin{smallmatrix} \one & 0 \\  0 &  W^{--} \end{smallmatrix}\right)$ have equal index. Up to composition with isomorphisms, the latter operator coincides with \eqref{eq:gk}, and thus with \eqref{eq:gk0}. Therefore, $W^{--}$ and \eqref{eq:gk0} have equal index as claimed.

Next, by Lemma \ref{lem:fredholm} applied to $\cE=\Ran \varrho= \cH_-^+ \oplus \cH_+^- $, $\cF=\cY$, $K=\varrho$ and $L=P$,  the operators
\beq\label{eq:fr4}
\varrho\oplus  P : \cX\to \cH_-^+ \oplus \cH_+^-  \oplus \cY
\eeq
and \eqref{eq:gk0} have equal index.
Finally, by Lemma \ref{lem:fredholm}   applied to  $\cE=\cY$, $\cF=\cH_-^+ \oplus \cH_+^-$,  $K=P$ and $L=\varrho$, the operators \eqref{eq:fr4} and $P: {\Ker\varrho}\to \cY$ have equal index.
We conclude that $W^{--}$ and $P|_{\Ker \varrho} $ have equal index.
\qeds

\begin{proposition}\label{propq} Under the same assumptions as in Proposition \ref{prop:fredholm},  supposing in addition  that $W^{-+}$ is compact, let us define $P_\mp^{-1}= (\varrho_\pm \oplus P)^{-1}\circ (0\oplus \one):\cY\to \cX$, and let
$$
Q= (\one  - \varrho_-^{-1} \pi_-^+ \varrho_-)P_-^{-1}:\cY\to \cX.
$$
Then $Q$ satisfies $P\circ Q=\one$  on $\cY$ and
\beq\label{roq}
\quad \varrho\circ Q : \cY \to \cHH \mbox{ is compact}.
\eeq
Furthermore, if $(P|_{\Ker\varrho})^{\inv}$ is a Fredholm inverse of $P:\Ker\varrho\to \cY$, then
\beq\label{pkq}
(P|_{\Ker\varrho})^{\inv} - Q : \cY \to \cX \mbox{ is compact}.
\eeq
\end{proposition}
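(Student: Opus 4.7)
The proof breaks into the three assertions, and my plan is as follows.

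The identity $P\circ Q = \one_\cY$ will be immediate from two facts built into the definitions: by construction of $P_-^{-1} = (\varrho_+\oplus P)^{-1}(0\oplus\one)$ one has $P\circ P_-^{-1} = \one_\cY$, and $P\circ\varrho_-^{-1} = 0$ since $\varrho_-^{-1}:\cH_-\to \Ker P$ ranges in $\Ker P$. For \eqref{roq} I plan to compute $\varrho Q = (\pi_-^+\varrho_- Q)\oplus(\pi_+^-\varrho_+ Q)$ componentwise. Since $\varrho_-|_{\Ker P}$ is an isomorphism by \eqref{eq:fr1}, one has $\varrho_-\circ\varrho_-^{-1}=\one_{\cH_-}$, and thus $\varrho_- Q = (\one-\pi_-^+)\varrho_- P_-^{-1}$, from which
\[
\pi_-^+\varrho_- Q = \pi_-^+(\one-\pi_-^+)\varrho_- P_-^{-1}=0.
\]
For the other component, $\varrho_+\circ P_-^{-1}=0$ by construction, and $\varrho_+\circ\varrho_-^{-1}=W$ by the very definition of $W$, hence
\[
\pi_+^-\varrho_+ Q = -\pi_+^- W\pi_-^+ \varrho_- P_-^{-1} = -W^{-+}\circ(\pi_-^+\varrho_- P_-^{-1}),
\]
which is compact precisely because of the newly added hypothesis that $W^{-+}$ is compact.

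For \eqref{pkq} my plan is to pull back compactness through $\varrho\oplus P:\cX\to\cHH\oplus\cY$. Applied to the difference $(P|_{\Ker\varrho})^\inv - Q$, the $\varrho$-component collapses to $-\varrho Q$ because $(P|_{\Ker\varrho})^\inv$ ranges in $\Ker\varrho$, and this is compact by the previous step. The $P$-component equals $-R_1$ with $R_1 = \one_\cY - P\circ(P|_{\Ker\varrho})^\inv$ compact by the very definition of a Fredholm inverse (using $P\circ Q=\one_\cY$ already established). Hence $(\varrho\oplus P)\circ((P|_{\Ker\varrho})^\inv - Q)$ is compact. Now $\varrho\oplus P$ itself is Fredholm: the proof of Proposition \ref{prop:fredholm} shows that it has the same index as $W^{--}$, which is Fredholm under the hypothesis that $W^{+-}$ is compact (already in force here). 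Picking any left parametrix $S:\cHH\oplus\cY\to\cX$ satisfying $S\circ(\varrho\oplus P) = \one_\cX - K$ with $K$ compact, the standard argument yields
\[
(P|_{\Ker\varrho})^\inv - Q = S\circ(\varrho\oplus P)\circ\big((P|_{\Ker\varrho})^\inv - Q\big) + K\circ\big((P|_{\Ker\varrho})^\inv - Q\big),
\]
a sum of two compact operators, hence compact.

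The only genuinely new ingredient compared to Proposition \ref{prop:fredholm} is the compactness of $W^{-+}$, which enters exactly once, in handling the $\pi_+^-\varrho_+$-component of $\varrho Q$. Everything else is algebraic bookkeeping among the identities $PP_-^{-1}=\one$, $P\varrho_-^{-1}=0$, $\varrho_\pm\varrho_\mp^{-1}=W^{\pm 1}$, combined with a standard left-parametrix pull-back of compactness. I do not anticipate any further obstacle.
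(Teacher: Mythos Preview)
Your proof is correct and follows essentially the same strategy as the paper: show $\varrho Q$ factors through the compact $W^{-+}$, then exploit the Fredholm property of $\varrho\oplus P$ to deduce compactness of $(P|_{\Ker\varrho})^{\inv}-Q$. Your execution is in fact slightly more streamlined---you compute $\varrho Q$ directly without introducing $P_+^{-1}$, and in the last step you use a generic left parametrix of $\varrho\oplus P$ rather than constructing the specific Fredholm inverse the paper builds; the paper's more explicit formulae \eqref{K1}--\eqref{K2} are, however, reused later in the proof of Proposition~\ref{prop:positivity}.
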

\proof \step{1} The property $P\circ Q=\one$ follows from $P\circ P_-^{-1}=\one$ and the fact that by definition, $\varrho_-^{-1}$ maps to $\Ker P$.

Let us show the compactness of  $\varrho\circ Q$. On the one hand, using that $\varrho =  \pi_-^+\varrho_- \oplus \pi_+^-\varrho_+ =  (\pi_-^+ \oplus   \pi_+^- W)\varrho_-$ on $\Ker P$, we can write
\beq\label{roro1}
\bea\varrho\, \varrho_-^{-1} \pi_-^+ \varrho_- & =  (\pi_-^+ \oplus   \pi_+^- W)\pi_-^+ \varrho_-\\
& =   (\pi_-^+ \oplus   \pi_+^- W \pi^+_-) \varrho_- =  \pi_-^+ \varrho_- \oplus W^{-+} \pi^+_-\varrho_-
\eea
\eeq
on $\Ker P$. On the other hand,
\beq\label{roro2}
\varrho=  \pi_-^+\varrho_- \oplus \pi_+^-\varrho_+  =   \pi_-^+\varrho_- \oplus  0 \ \mbox{ on } \Ker \varrho_+.
\eeq
Since $P_\mp^{-1}$ maps to  $\Ker \varrho_\pm$, using \eqref{roro1} and \eqref{roro2} we get
\beq\label{K1}
\bea
\varrho\circ Q&= \varrho(\one - \varrho_-^{-1} \pi^+_-  \varrho_-) P_-^{-1}\\ &= \varrho  P_-^{-1} -  \varrho\, \varrho_-^{-1} \pi_-^+ \varrho_- P_-^{-1}\\
&= \varrho  P_-^{-1} -  \varrho\, \varrho_-^{-1} \pi_-^+ \varrho_- (P_-^{-1}-P_+^{-1}) \\
&= (\pi_-^+\varrho_- \oplus  0) P_-^{-1} -  \varrho\, \varrho_-^{-1} \pi_-^+ \varrho_- (P_-^{-1}-P_+^{-1})
\\
&= (\pi_-^+\varrho_- \oplus  0) (P_-^{-1}-P_+^{-1}) -  \varrho\, \varrho_-^{-1} \pi_-^+ \varrho_- (P_-^{-1}-P_+^{-1}) \\
&= (\pi_-^+\varrho_- \oplus  0) (P_-^{-1}-P_+^{-1}) -  (\pi_-^+ \varrho_- \oplus W^{-+} \pi^+_-\varrho_-)(P_-^{-1}-P_+^{-1}) \\
&= -  (0\oplus W^{-+} \pi^+_-\varrho_-)(P_-^{-1}-P_+^{-1})
 \eqdef K_1,
\eea
\eeq
which is compact by compactness of $W^{-+}$.

\step{2} We know from the proof of Proposition \ref{prop:fredholm} that $\varrho\oplus  P : \cX\to \cH_-^+ \oplus \cH_+^-  \oplus \cY$ is Fredholm. Let $(\varrho\oplus P)^{\inv}$ be a Fredholm inverse; then in particular
\beq\label{beka1}
(\varrho\oplus P)^{\inv} (\varrho\oplus P)=\one - \pi_{\ker (\varrho\oplus P)},
\eeq
where $\pi_{\ker (\varrho\oplus P)}:\cX\to \cX$ projects to the finite dimensional space $\Ker (\varrho\oplus P)$. Furthermore, $(\varrho\oplus P)^{\inv}$ can be chosen in such way that
\beq\label{beka2}
(\varrho\oplus P)^{\inv}  \circ (0\oplus \one) =  (P|_{\Ker\varrho})^{\inv}.
\eeq
Indeed, this can be arranged by defining $(\varrho\oplus P)^{\inv}$ as
$$
(\varrho\oplus P)^{\inv}= \begin{pmatrix} \varrho^{-1} & 0 \\ 0 & 1\end{pmatrix} \begin{pmatrix} 1 & 0 \\ 0 & (P|_{\Ker\varrho})^{\inv}\end{pmatrix}
 \begin{pmatrix} 1 & 0 \\ -P\varrho^{-1}   & 1\end{pmatrix},$$
 where the matrix notation refers to decomposition $\cX=\Ran(P|_{\Ker\varrho})^{\inv}\oplus\Ker\varrho$, and where $\varrho^{-1}:  \cHH\to \Ran (P|_{\Ker\varrho})^{\inv}$ is well-defined as the inverse of $\varrho$ restricted to $\Ran (P|_{\Ker\varrho})^{\inv}$. The fact that this defines a Fredholm inverse can be checked directly or by using \eqref{eq:eee} with $L=\varrho$ and $K=P$.

 If $K_1$ is the compact operator defined in \eqref{K1} then using \eqref{beka1} and \eqref{beka2} we get
\beq\label{K2}
\bea
Q & = (\varrho\oplus P)^{\inv} (\varrho\oplus P)Q +  \pi_{\ker (\varrho\oplus P)} Q\\
&= (\varrho\oplus P)^{\inv} \circ (K_1\oplus \one) +  \pi_{\ker (\varrho\oplus P)} Q \\
&=(P|_{\Ker\varrho})^{\inv} +(\varrho\oplus P)^{\inv} \circ (K_1\oplus 0)+  \pi_{\ker (\varrho\oplus P)} Q \\
&\eqdef (P|_{\Ker\varrho})^{\inv}-  K,
\eea
\eeq
where $K$ is compact by compactness of $K_1$ and $\pi_{\ker (\varrho\oplus P)}$.
\qed

\bibliographystyle{abbrv}
\bibliography{diracfredholm}

 \end{document}